\newtheorem{theorem}{Theorem}[section]
\newtheorem{lemma}[theorem]{Lemma}
\newtheorem{corollary}[theorem]{Corollary}
\newtheorem{conjecture}[theorem]{Conjecture}
\newtheorem{proposition}[theorem]{Proposition}
\theoremstyle{definition}
\newtheorem{definition}[theorem]{Definition}
\newtheorem{remark}[theorem]{Remark}
\newtheorem{example}[theorem]{Example}
\newcommand{\nc}{\newcommand}
\newcommand{\delete}[1]{}
\def\bc{\begin{center}}
	\def\ec{\end{center}}
\nc{\tred}[1]{\textcolor{red}{#1}}
\nc{\tblue}[1]{\textcolor{blue}{#1}} \nc{\tgreen}[1]{\textcolor{green}{#1}} \nc{\tpurple}[1]{\textcolor{purple}{#1}} \nc{\btred}[1]{\textcolor{red}{\bf #1}} \nc{\btblue}[1]{\textcolor{blue}{\bf #1}} \nc{\btgreen}[1]{\textcolor{green}{\bf #1}} \nc{\btpurple}[1]{\textcolor{purple}{\bf #1}}
\newcommand{\efootnote}[1]{}
\nc{\mlabel}[1]{\label{#1}}  
\nc{\mcite}[1]{\cite{#1}}  
\nc{\mref}[1]{\ref{#1}}  
\nc{\meqref}[1]{\eqref{#1}}  
\nc{\mbibitem}[1]{\bibitem{#1}} 
	\nc{\mlabel}[1]{\label{#1}  
		{\hfill \hspace{1cm}{\bf{{\ }\hfill(#1)}}}}
	\nc{\mcite}[1]{\cite{#1}{{\bf{{\ }(#1)}}}}  
	\nc{\mref}[1]{\ref{#1}{{\bf{{\ }(#1)}}}}  
	\nc{\meqref}[1]{\eqref{#1}{{\bf{{\ }(#1)}}}}  
	\nc{\mbibitem}[1]{\bibitem[\bf #1]{#1}} 
\renewcommand\geq{\geqslant}
\renewcommand\leq{\leqslant}
\renewcommand\bar[1]{\overline{#1}}
\nc{\name}[1]{{\bf #1}}
\nc{\tforall}{\quad \text{ for all }}
\nc{\mre}{\mathrm{Re}\,}
\nc{\mim}{\mathrm{im}\,}
\nc{\nz}{\varepsilon}
\nc{\Id}{\mathrm{Id}}
\nc{\Fix}{\mathrm{Fix}}
\nc{\DO}{\mathrm{DO}}
\nc{\IDO}{\mathrm{IDO}}
\nc{\mrep}{{\textit{Id}}}
\nc{\mnoindent}{\smallskip\noindent}
\nc{\bin}[2]{ (_{\stackrel{\scs{#1}}{\scs{#2}}})}  
\nc{\binc}[2]{ \left (\!\! \begin{array}{c} \scs{#1}\\
		\scs{#2} \end{array}\!\! \right )}  
\nc{\bincc}[2]{  \left ( {\scs{#1} \atop
		\vspace{-1cm}\scs{#2}} \right )}  
\nc{\bs}{\bar{S}} \nc{\cosum}{\sqsubset} \nc{\la}{\longrightarrow} \nc{\rar}{\rightarrow} \nc{\dar}{\downarrow} \nc{\dprod}{**} \nc{\dap}[1]{\downarrow \rlap{$\scriptstyle{#1}$}} \nc{\md}[1]{\bar{#1}} \nc{\uap}[1]{\uparrow \rlap{$\scriptstyle{#1}$}} \nc{\defeq}{\stackrel{\rm def}{=}} \nc{\disp}[1]{\displaystyle{#1}} \nc{\dotcup}{\ \displaystyle{\bigcup^\bullet}\ } \nc{\gzeta}{\bar{\zeta}} \nc{\hcm}{\ \hat{,}\ } \nc{\hts}{\hat{\otimes}} \nc{\barot}{{\otimes}} \nc{\free}[1]{\bar{#1}} \nc{\uni}[1]{\tilde{#1}} \nc{\hcirc}{\hat{\circ}} \nc{\leng}{\ell} \nc{\lleft}{[} \nc{\lright}{]} \nc{\lc}{\lfloor} \nc{\rc}{\rfloor}
\nc{\lb}{[} 
\nc{\rb}{]} 
\nc{\curlyl}{\left \{ \begin{array}{c} {} \\ {} \end{array}
	\right.  \!\!\!\!\!\!\!}
\nc{\curlyr}{ \!\!\!\!\!\!\!
	\left. \begin{array}{c} {} \\ {} \end{array}
	\right \} }
\nc{\longmid}{\left | \begin{array}{c} {} \\ {} \end{array}
	\right. \!\!\!\!\!\!\!}
\nc{\onetree}{\bullet} \nc{\ora}[1]{\stackrel{#1}{\rar}}
\nc{\ola}[1]{\stackrel{#1}{\la}}
\nc{\ot}{\otimes} \nc{\mot}{{{\boxtimes\,}}} \nc{\otm}{\overline{\boxtimes}} \nc{\sprod}{\bullet} \nc{\scs}[1]{\scriptstyle{#1}} \nc{\mrm}[1]{{\rm #1}} \nc{\msum}{\sum\limits}
\nc{\margin}[1]{\marginpar{\rm #1}}   
\nc{\dirlim}{\displaystyle{\lim_{\longrightarrow}}\,} \nc{\invlim}{\displaystyle{\lim_{\longleftarrow}}\,} \nc{\mvp}{\vspace{0.3cm}} \nc{\tk}{^{(k)}} \nc{\tp}{^\prime} \nc{\ttp}{^{\prime\prime}} \nc{\svp}{\vspace{2cm}} \nc{\vp}{\vspace{8cm}} \nc{\proofbegin}{\noindent{\bf Proof: }}
\nc{\proofend}{$\blacksquare$ \vspace{0.3cm}}
\nc{\modg}[1]{\!<\!\!{#1}\!\!>}
\nc{\intg}[1]{F_C(#1)} \nc{\lmodg}{\!<\!\!} \nc{\rmodg}{\!\!>\!} \nc{\cpi}{\widehat{\Pi}}
\nc{\sha}{{\mbox{\cyr X}}}  
\nc{\shap}{{\mbox{\cyrs X}}} 
\nc{\shpr}{\diamond}    
\nc{\shp}{\ast} \nc{\shplus}{\shpr^+}
\nc{\shprc}{\shpr_c}    
\nc{\msh}{\ast} \nc{\zprod}{m_0} \nc{\oprod}{m_1} \nc{\vep}{\varepsilon} \nc{\labs}{\mid\!} \nc{\rabs}{\!\mid}
\nc{\astarrow}{\overset{\raisebox{-3pt}{$\ast$}}{\rightarrow}}
\nc{\dth}{d} \nc{\mmbox}[1]{\mbox{\ #1\ }} \nc{\fp}{\mrm{FP}} \nc{\rchar}{\mrm{char}} \nc{\Fil}{\mrm{Fil}} \nc{\Mor}{Mor\xspace} \nc{\gmzvs}{gMZV\xspace} \nc{\gmzv}{gMZV\xspace} \nc{\mzv}{MZV\xspace} \nc{\mzvs}{MZVs\xspace}
\nc{\MZV}{\mathrm{MZV}}
\nc{\Hom}{\mrm{Hom}} \nc{\id}{\mrm{id}} \nc{\im}{\mrm{im}} \nc{\incl}{\mrm{incl}}  \nc{\mchar}{\rm char}
\nc{\Alg}{\mathbf{Alg}} \nc{\Bax}{\mathbf{Bax}} \nc{\bff}{\mathbf f} \nc{\bfk}{{\bf k}} \nc{\bfone}{{\bf 1}} \nc{\bfx}{\mathbf x} \nc{\bfy}{\mathbf y}
\nc{\base}[1]{\bfone^{\otimes ({#1}+1)}} 
\nc{\Cat}{\mathbf{Cat}} \delete{}
\nc{\detail}{\marginpar{\bf More detail}
	\noindent{\bf Need more detail!}
	\svp}
\nc{\Int}{\mathbf{Int}} \nc{\Mon}{\mathbf{Mon}}
\nc{\rbtm}{{shuffle }} \nc{\rbto}{{Rota-Baxter }} \nc{\remarks}{\noindent{\bf Remarks: }} \nc{\Rings}{\mathbf{Rings}} \nc{\Sets}{\mathbf{Sets}}
\nc{\balpha}{\mathbf{\alpha}}
\nc{\BA}{{\mathbb A}} \nc{\CC}{{\mathbb C}} \nc{\DD}{{\mathbb D}} \nc{\EE}{{\mathbb E}} \nc{\FF}{{\mathbb F}} \nc{\GG}{{\mathbb G}} \nc{\HH}{{\mathbb H}} \nc{\LL}{{\mathbb L}} \nc{\NN}{{\mathbb N}} \nc{\KK}{{\mathbb K}} \nc{\PP}{{\mathbb P}} \nc{\QQ}{{\mathbb Q}} \nc{\RR}{{\mathbb R}} \nc{\TT}{{\mathbb T}} \nc{\VV}{{\mathbb V}} \nc{\ZZ}{{\mathbb Z}}
\nc{\cala}{{\mathcal A}} \nc{\calc}{{\mathcal C}} \nc{\cald}{{\mathcal D}} \nc{\cale}{{\mathcal E}} \nc{\calf}{{\mathcal F}} \nc{\calg}{{\mathcal G}} \nc{\calh}{{\mathcal H}} \nc{\cali}{{\mathcal I}} \nc{\call}{{\mathcal L}} \nc{\calm}{{\mathcal M}} \nc{\caln}{{\mathcal N}} \nc{\calo}{{\mathcal O}} \nc{\calp}{{\mathcal P}} \nc{\calr}{{\mathcal R}} \nc{\cals}{{\mathcal S}} \nc{\calt}{{\mathcal T}} \nc{\calw}{{\mathcal W}} \nc{\calk}{{\mathcal K}} \nc{\calx}{{\mathcal X}}
\nc{\calz}{{\mathcal Z}}
\nc{\fraka}{{\mathfrak a}} \nc{\frakA}{{\mathfrak A}} \nc{\frakb}{{\mathfrak b}} \nc{\frakB}{{\mathfrak B}}
\nc{\frakc}{{\mathfrak c}}  \nc{\frakD}{{\mathfrak D}}
\nc{\frakH}{{\mathfrak H}}
\nc{\frakh}{{\mathfrak h}} \nc{\frakM}{{\mathfrak M}}
\nc{\frakO}{{\mathfrak O}}
\nc{\frakE}{{\mathfrak E}}
\nc{\bfrakM}{\overline{\frakM}} \nc{\frakm}{{\mathfrak m}} \nc{\frakP}{{\mathfrak P}} \nc{\frakN}{{\mathfrak N}} \nc{\frakp}{{\mathfrak p}} \nc{\frakS}{{\mathfrak S}}
\nc{\frakk}{{\mathfrak k}}
\nc{\frakx}{{\mathfrak x}}
\nc{\frakl}{{\mathfrak l}} \nc{\ox}{\bar{\frakx}} \nc{\frakX}{{\mathfrak X}} \nc{\fraky}{{\mathfrak y}} \nc\dop{\delta}
\nc{\Reduce}{{\rm Red}}
\font\cyr=wncyr10 \font\cyrs=wncyr7
\nc{\redt}[1]{\textcolor{red}{#1}}
\nc{\li}[1]{\textcolor{red}{#1}}
\nc{\lir}[1]{\textcolor{red}{Li:#1}}
\nc{\ap}[1]{\textcolor{blue}{#1}}
\nc{\apr}[1]{\textcolor{blue}{AP:#1}}
\nc{\wvec}[2]{{\scriptsize{\Big [ \!\!\begin{array}{c} #1 \\ #2 \end{array} \!\! \Big ]}}}
\nc{\bwvec}[2]{\Big(\wvec{#1}{#2}\Big)}
\nc{\jwvec}[2]{{\scriptsize{\Big [ \!\!\begin{array}{cccccccccccccc} #1 \\ #2 \end{array} \!\! \Big ]}}}
\nc{\bjwvec}[2]{\Big(\jwvec{#1}{#2}\Big)}
\begin{document}
	
\title[On differential lattices]{On differential lattices}
	
\author{Aiping Gan}
\address{School of Mathematics and Statistics,
Jiangxi Normal University, Nanchang, Jiangxi 330022, P.R. China}
\email{ganaiping78@163.com}

\author{Li Guo${}^\ast$}
\address{Department of Mathematics and Computer Science, Rutgers University, Newark, NJ 07102, USA}
\email{liguo@rutgers.edu}
	
\hyphenpenalty=8000
	
\date{\today}
	
\begin{abstract}
This paper studies the differential lattice, defined to be a lattice $L$ equipped with a map $d:L\to L$ that satisfies a lattice analog of the Leibniz rule for a derivation.
Isomorphic differential lattices are studied and classifications of differential lattices are obtained for some basic lattices.
Several families of derivations on a lattice are explicitly constructed, giving realizations of the lattice as lattices of derivations. Derivations on a finite distributive lattice are shown to have a natural structure of lattice. 
Moreover, derivations on a complete infinitely distributive lattice form a complete lattice.
For a general lattice, it is conjectured that its poset of derivations is a lattice that uniquely determines the given lattice. 
\end{abstract}
	
\subjclass[2010]{
06B10		
13N15		
06B23		
12H05		
06B15		
}
	
\keywords{lattice; derivation; congruence; differential lattice; differential algebra\\
${}^\ast$ Corresponding author: Li Guo (liguo@rutgers.edu)}
	
\maketitle
	
\tableofcontents
	
\hyphenpenalty=8000 \setcounter{section}{0}
	
	
\allowdisplaybreaks

\section {Introduction}	
\mlabel{sec:intr}

The notion of derivation from analysis has been defined for various algebraic structures by extracting the Leibniz rule
$$\frac{d}{dx}(fg)=\Big(\frac{d}{dx}(f)\Big)g +f\frac{d}{dx}(g).$$
An algebraic structure with a derivation is broadly called a differential algebra.

As the earliest instance, the differential algebra
for fields and commutative algebras has  its origin in the algebraic study of differential equations~\mcite{Ko,Ri,SP} and is a natural yet profound extension of commutative algebra and the related algebraic geometry. After many years of development, the theory has evolved into a vast area in mathematics~\mcite{CGKS,Ko,SP}. Furthermore, differential algebra has found  important applications in arithmetic geometry, logic and computational algebra, especially in the profound work of W.-T. Wu on mechanical proof of geometric theorems~\mcite{Wu,Wu2}.

Later on, there have been quite much interests in derivations for noncommutative algebras. For instance, in connection with combinatorics, differential structures were found on heap ordered trees~\mcite{GrL} and on decorated rooted trees~\mcite{GK3}.
More recently, derivations on other algebraic structures have been initiated, including for path algebras~\mcite{GL}, Lie algebras and Archimedean d-rings~\mcite{LGG,MZ,Po}. The operad of differential associative algebras was studied in~\mcite{Lod}. Furthermore, differential graded Poisson algebras have been studied~\mcite{HLW,LWZ}.
Under the nilpotent condition $d^2=0$, derivations play an essential role in homological theories~\mcite{Wei}.
	
As another major algebraic structure with broad applications, lattice theory~\mcite{Bir,bly,Gr1} has been developed in close connection with universal algebra~\mcite{BS,Gr2}. The notion of derivations on lattices was introduced by Szasz~\mcite{sz} and further developed  by Ferrari \mcite{fer}, among others. In their language, a derivation on a lattice $(L,\vee,\wedge)$ is a map $d:L\to L$ satisfying
\begin{equation}
	d(x\vee y)=d(x)\vee d(y), \quad d(x\wedge y)=(d(x)\wedge y)\vee (x\wedge d(y)) \tforall x, y\in L.
\end{equation}

More recently, the notion of derivations without the first condition was investigated by Xin and coauthors~\mcite{xin2,xin1} with motivation from information science.
They studied properties of derivations on lattices and characterized modular lattices and distributive lattices by isotone derivations. For subsequent work, see~\mcite{als1,he,jun,jan,rao,yaz}.
There are also studies on generalizations of derivations on lattices, such as generalized derivations \mcite{als2}, higher derivations \mcite{Ce1},
$f$-derivations~\mcite{CO}, $n$-derivations and $(n, m)$-derivations \mcite{Ce2}.
\smallskip 

This paper gives an algebraic study of a differential lattice, defined to be a lattice together with a derivation, applying universal algebra. Isomorphic classes of differential lattices are characterized and  classifications of differential lattices on some basic lattices are obtained. In analogy to representations of lattices as congruence lattices of algebras~\cite{GS2,KNS}, we obtain lattice structures on the set of derivations on a given lattice, suggesting that derivations on lattices can provide representations and realizations of the abstractly defined lattices. 

The paper is organized as follows. In Section~\mref{sec:der},
the notion of differential lattices is given and basic properties of lattice derivations  are reviewed and generalized (Proposition \mref{the:000}). In particular, inner derivations, isotone derivations and meet-translation derivations are shown to be equivalent.

Section~\mref{sec:iso} considers isomorphic classes of differential lattices. In Section~\mref{ss:dl}, isomorphic classes of differential lattices and isomorphic derivations on a given lattice are introduced and their basic properties are given.
We also show that there are several explicitly constructed families of derivations on any given lattice (Proposition~\mref{pro:000}).
In Section~\mref{ss:class}, we characterize derivations on two types of lattices: the finite chains and
the diamond type lattices $M_{n}$\, $(n\geq 3)$, leading to a classification of isomorphic derivations on these lattices: there are exactly $2^{n-1}$ derivations and $2^{n-1}$ isomorphic classes of derivations
 on a $n$-element chain (Theorem \mref{the:001}) and, on the diamond type lattice $M_{n}$, there are $2+\sum\limits_{k=1}^{n-2}(k+1)\binc{n-2}{k}$ derivations and
$2(n-1)$ isomorphic classes of derivations (Theorem \mref{t:00}).

Section~\mref{sec:latder} gives a detailed study of possible lattice structures on the set of derivations on a lattice. We show that the set of derivations on a finite distributive lattice has a natural lattice structure (Theorem~\mref{pro:480}) and that derivations on a complete infinitely distributive lattice form a complete lattice (Theorem~\mref{p:401}). It is also proved that isotone derivations form a lattice isomorphic to the lattice (Proposition~\mref{pp:40}). Furthermore, there is another natural family of derivations that also form a lattice isomorphic to the given lattice (Proposition \mref{pp:440}). Derivations on several families of lattices are shown to have lattice structures. 
These results provide strong evidence to the conjecture that the set of derivations on any lattice is a lattice and that a lattice is uniquely determined by its poset of derivations (Conjecture \ref{cj:00}). 

\smallskip

\noindent
{\bf Notations.}
Throughout this paper, unless otherwise specified, a lattice is assumed to be bounded  $(L, \vee, \wedge, 0, 1)$ with the bottom element $0$ and the top element $1$.
Let $|A|$  denote the cardinality of a set $A$.
For any elements $a, b$ of a poset $(A, \leq)$, we write $a< b$ if $a\leq b$ and $a\neq b$.

\section {Differential lattices and basic properties}
\mlabel{sec:der}

In this section, the notion of differential lattices is introduced and their basic properties are presented. We refer the reader to~\mcite{bly,BS,Gr1} for background on lattices.

Combining the structures of a lattice and a derivation in the language of universal algebra~\mcite{BS}, we give

\begin{definition}
	A \name{differential lattice} is an algebra $(L, \vee, \wedge, d, 0, 1)$ of type $(2, 2, 1, 0, 0)$ such that
	\begin{enumerate}
		\item $(L, \vee, \wedge, 0, 1)$ is a bounded lattice, and
		
		\item  $d$ is a \name{derivation} on $L$ in the sense that~\mcite{xin1}
\begin{equation}
	d(x\wedge y)=(d(x)\wedge y)\vee (x\wedge d(y)) \quad \tforall ~ x, y\in L.
	\mlabel{eq:der}
\end{equation}
	\end{enumerate}
\end{definition}

Adapting the classical terminology of differential algebras~\mcite{Ko}, we also call a derivation a \name{differential operator}. More generally we also call a map $f:L\to L$ an \name{operator} even though there is no linearity involved (see Remark~\mref{rk:cond}\ref{it:cond1}).

Since all axioms of differential lattices are equations between terms,
the class of all differential lattices forms a variety.
Thus the notions of isomorphism, subalgebra, congruence and direct product, etc,
are defined from the corresponding general notions in universal algebra~\mcite{BS}.

Simple examples of derivations include  the zero operator $\textbf{0}_{L}$ and the identity operator $ \mrep_{L}$ on $L$:
$$\textbf{0}_{L}: L\to L, x\mapsto 0 \quad \text{ and }\quad
 \mrep_{L}: L\to L, x\mapsto x \ \tforall x\in L.$$
Moreover, for a given $u\in L$, the map
$$d_u(x):=x\wedge u \quad \tforall x\in L,$$
is a derivation, called an \name{inner derivation}. It is called a principal derivation in~\mcite{xin1}. If $L$ is a distributive lattice, then the set of inner derivations on $L$ is a lattice that is isomorphic to $L$~\cite[Theorem~3.29]{xin1}.
The distributivity condition will be removed in Proposition~\mref{pp:40}.

We give some general remarks on our choice of conditions for a differential lattice.
\begin{remark}
\begin{enumerate}
\item Following the recent studies starting in~\mcite{xin1}, we do not impose the extra ``linearity" condition $d(x\vee y)=d(x)\vee d(y)$, in contrast to some earlier treatments~\mcite{fer,sz}. Our choice of the conditions has its motivation from information science~\mcite{als1,jun,jan,xin1} and already leads to good properties as displayed in Proposition~\mref{pro:201} and the rest of the paper. Indeed as shown in~\mcite{fer}, including the linearity would render the lattice derivation quite specialized:
$$d(x\wedge y)=d(x)\wedge y = x\wedge d(y) \quad \tforall x, y \in L$$
and consequently, a derivation with the linearity must be an inner derivation. See Proposition \mref{the:000}.
\mlabel{it:cond1}
\item
It is natural to take the notion dual to the derivation on $L$ defined by Eq.~(\mref{eq:der}) and consider the condition
$$d(x\vee y)=(d(x)\vee y) \wedge(x\vee d(y)) \quad \tforall x, y\in L.$$
If this condition is imposed alone, then the study should be completely parallel to the study of Eq.~\meqref{eq:der} due to the symmetry of the operations $\vee$ and $\wedge$ in the definition of a lattice; while if both conditions are imposed, then the study becomes trivial since $d$ has to be the identity derivation according to~\cite[Theorem 3.17]{xin1}.
\mlabel{it:cond2}
\end{enumerate}
\mlabel{rk:cond}
\end{remark}

Denote the set of all derivations on $L$ by $\DO(L)$. We recall the following result for later applications.
\begin{proposition}
	\name{\mcite{xin1}} Let $(L, \vee, \wedge,  0, 1)$ be a lattice, $d\in \DO(L)$ and
	$x, y\in L$. 
	\begin{enumerate}
		\item $d(x)\leq x$ and, in particular, $d(0)=0$.
		\mlabel{it:2011}
		\item $x\wedge d(y)\leq d(x\wedge y)$.
		\mlabel{it:2012}
		\item If $x\leq d(u)$ for some $u\in L$, then $d(x)=x$.
		\mlabel{it:2013}
		\item If $d(1)=1$, then $d=\mrep_{L}$.
		\mlabel{it:2014}
		\item $d$ is  idempotent, that is, $d^{2}=d$.
		\mlabel{it:2015}
	\end{enumerate}
\mlabel{pro:201}
\end{proposition}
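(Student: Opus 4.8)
The plan is to prove each of the five items in turn, using the defining identity~\meqref{eq:der} together with the absorption and idempotency laws of a lattice, and feeding earlier items into later ones.

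\textbf{Item~\ref{it:2011}.} First I would set $y=x$ in~\meqref{eq:der}. Since $x\wedge x=x$, the left side is $d(x)$, while the right side is $(d(x)\wedge x)\vee(x\wedge d(x))=d(x)\wedge x$ by idempotency of $\vee$. Hence $d(x)=d(x)\wedge x$, which is exactly $d(x)\leq x$. Taking $x=0$ and using $d(0)\leq 0$ gives $d(0)=0$.

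\textbf{Item~\ref{it:2012}.} From~\meqref{eq:der}, $d(x\wedge y)=(d(x)\wedge y)\vee(x\wedge d(y))\geq x\wedge d(y)$, which is the claim; similarly $d(x\wedge y)\geq d(x)\wedge y$, a symmetric fact I would record for use below.

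\textbf{Item~\ref{it:2013}.} Suppose $x\leq d(u)$. Then $x\wedge d(u)=x$, so by item~\ref{it:2012} (applied with the roles $x\rightsquigarrow x$, $y\rightsquigarrow u$, using $x\wedge d(u)\le d(x\wedge u)$) we get $x\le d(x\wedge u)$. But $x\le d(u)\le u$ by item~\ref{it:2011}, so $x\wedge u=x$, giving $x\le d(x)$; combined with $d(x)\le x$ from item~\ref{it:2011} this yields $d(x)=x$. (Alternatively, and perhaps more cleanly: from $x\le d(u)\le u$ we have $x=x\wedge u$, and I would expand $d(x)=d(x\wedge u)=(d(x)\wedge u)\vee(x\wedge d(u))$; the second joinand is $x\wedge d(u)=x$, so $d(x)\ge x$, and again item~\ref{it:2011} finishes it.)

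\textbf{Items~\ref{it:2014} and~\ref{it:2015}.} For item~\ref{it:2014}, if $d(1)=1$ then for any $x\in L$ we have $x\le 1=d(1)$, so item~\ref{it:2013} with $u=1$ gives $d(x)=x$, i.e.\ $d=\mrep_L$. For item~\ref{it:2015}, by item~\ref{it:2011} we have $d(x)\le x$; I want to show $d(d(x))=d(x)$. The natural move is to invoke item~\ref{it:2013} with $u=x$: indeed $d(x)\le d(x)$ trivially, but item~\ref{it:2013} requires an element bounded above by $d(u)$ — taking the element to be $d(x)$ itself and $u=x$, the hypothesis $d(x)\le d(x)$ holds, so $d(d(x))=d(x)$. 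Thus $d^2=d$. The only place demanding slight care is item~\ref{it:2013}, since one must avoid circular reasoning by making sure the two sub-cases ($x\wedge u=x$) are handled via item~\ref{it:2011} only; everything else is a direct substitution into~\meqref{eq:der} followed by an absorption identity, so I do not anticipate a genuine obstacle.
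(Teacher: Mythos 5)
Your proposal is correct. Note that the paper itself supplies no proof of this proposition: it is quoted from the reference \mcite{xin1} purely for later use, so there is nothing internal to compare against. Your derivation is the standard one and each step checks out: setting $y=x$ in \meqref{eq:der} gives (i); (ii) is immediate since $d(x\wedge y)$ is exhibited as a join containing $x\wedge d(y)$; your ``cleaner'' version of (iii) (expand $d(x)=d(x\wedge u)$ and observe the joinand $x\wedge d(u)=x$) is exactly right and uses only (i); and (iv), (v) follow from (iii) with $u=1$ and with the element $d(x)$ and $u=x$ respectively, with no circularity since (iii) depends only on (i)--(ii). The only cosmetic remark is that in (v) the phrasing around ``invoking (iii) with $u=x$'' is momentarily confusing because the roles of the letters change; it would read better to state explicitly that (iii) is applied to the element $d(x)$, whose hypothesis $d(x)\leq d(x)$ holds trivially.
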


Let $(L, \vee, \wedge,  0, 1)$ be a  lattice and $d$  be an operator on $L$.
Denote the set of all fix points of $d$ by
$\Fix_{d}(L)$:
$$\Fix_{d}(L):=\{x\in L~|~ d(x)=x\}\subseteq L.$$

\begin{lemma}
Let $(L, \vee, \wedge,  0, 1)$ be a lattice and $d$ be an operator on $L$. Then $d^{2}=d$ if and only if $\Fix_{d}(L)$ equals to the image $d(L)$ of $d$.
\mlabel{l:200}
\end{lemma}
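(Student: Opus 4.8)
The plan is to prove the two implications of the biconditional separately, using only the definitions of $\Fix_{d}(L)$ and $d(L)$ together with elementary logic; no lattice-specific structure is needed.

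\textbf{($\Rightarrow$)} Assume $d^{2}=d$. First I would show $\Fix_{d}(L)\subseteq d(L)$: if $x\in\Fix_{d}(L)$, then $d(x)=x$, so $x$ is visibly in the image of $d$. Conversely, to show $d(L)\subseteq\Fix_{d}(L)$, take any $y\in d(L)$, say $y=d(a)$ for some $a\in L$. Then $d(y)=d(d(a))=d^{2}(a)=d(a)=y$, so $y\in\Fix_{d}(L)$. Hence the two sets coincide.

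\textbf{($\Leftarrow$)} Assume $\Fix_{d}(L)=d(L)$. To prove $d^{2}=d$, fix an arbitrary $x\in L$. Then $d(x)\in d(L)=\Fix_{d}(L)$, so by definition of $\Fix_{d}(L)$ we have $d(d(x))=d(x)$, i.e.\ $d^{2}(x)=d(x)$. Since $x$ was arbitrary, $d^{2}=d$.

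There is essentially no obstacle here: the statement is a purely set-theoretic observation about idempotent maps, and the only mild point to be careful about is getting both inclusions in the forward direction (the inclusion $\Fix_{d}(L)\subseteq d(L)$ is immediate and perhaps easy to overlook as needing mention). I would present it as a short two-paragraph argument along the lines above.
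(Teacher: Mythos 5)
Your proof is correct and follows essentially the same route as the paper's: both directions are handled by the same elementary set-theoretic argument, with the only cosmetic difference being that the paper records the inclusion $\Fix_{d}(L)\subseteq d(L)$ once at the outset (valid for any operator) rather than inside the forward implication. No issues.
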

\begin{proof}
For an operator $d$ on a lattice $(L, \vee, \wedge,  0, 1)$, first we have $\Fix_d(L)=d(\Fix_d(L)) \subseteq d(L)$.

If $d^{2}=d$, then  $d(L)\subseteq \Fix_{d}(L)$.
Indeed, for any $y=d(u)\in d(L)$, we have
 $d(y)=d(d(u))=d^{2}(u)=d(u)=y$, meaning $y\in \Fix_{d}(L)$. Hence  $d(L)\subseteq \Fix_{d}(L)$.
Therefore $\Fix_{d}(L)=d(L)$.

Conversely, if $\Fix_{d}(L)=d(L)$, then for any $x\in L$, we have $d(x)\in \Fix_{d}(L)$ and so
$d^{2}(x)=d(d(x))=d(x)$. Consequently, $d^{2}=d$.
\end{proof}

\begin{corollary}
Let $(L, \vee, \wedge,  0, 1)$ be a  lattice and  $d\in \DO(L)$. Then  $\Fix_{d}(L)=d(L)$.
\mlabel{c:200}
\end{corollary}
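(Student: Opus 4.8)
The statement to prove is Corollary~\ref{c:200}: for any $d \in \DO(L)$, we have $\Fix_d(L) = d(L)$.

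The plan is to deduce this immediately from the two results that precede it. By Proposition~\ref{pro:201}\ref{it:2015}, every derivation $d$ on $L$ is idempotent, that is, $d^2 = d$. By Lemma~\ref{l:200}, for an arbitrary operator $d$ on $L$, the condition $d^2 = d$ is equivalent to $\Fix_d(L) = d(L)$. Combining these, since a derivation is in particular an operator satisfying $d^2 = d$, the lemma applies and yields $\Fix_d(L) = d(L)$.

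There is essentially no obstacle here: the corollary is a one-line consequence of chaining Proposition~\ref{pro:201}\ref{it:2015} into Lemma~\ref{l:200}. The only thing to be careful about is making sure the hypotheses of Lemma~\ref{l:200} are met — namely that $d$ is an operator on the lattice $L$, which holds because every derivation is by definition a map $L \to L$, i.e. an operator in the sense used in the paper. So the proof is simply: by Proposition~\ref{pro:201}\ref{it:2015}, $d^2 = d$; hence by Lemma~\ref{l:200}, $\Fix_d(L) = d(L)$.
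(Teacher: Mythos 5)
Your proof is correct and is exactly the argument the paper gives: it cites Proposition~\mref{pro:201}~\mref{it:2015} for $d^{2}=d$ and then applies Lemma~\mref{l:200}. No issues.
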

\begin{proof} It follows from Proposition \mref{pro:201} and Lemma \mref{l:200}.
\end{proof}

A derivation $d$ on a lattice $L$ is called \name{isotone} \mcite{xin1}  if $d(x)\leq d(y)$ for any $x, y\in L$ with $x\leq y$.
Denote the set of all isotone derivations on $L$ by  $\IDO(L)$.
Also recall from~\mcite{sz} that a map $d:L\to L$ is called a \name{meet-translation} if $d(x\wedge y)=x\wedge d(y)$ for all $x, y\in L$.

The following result is a simple improvement of~\cite[Theorem~3.10]{xin2} and \cite[Theorem~3.18]{xin1}, by not requiring that the operator $d$ is a derivation in the hypothesis.

\begin{proposition}
Let $(L, \vee, \wedge,  0, 1)$ be a lattice and $d$ be an operator on $L$. Then
the following statements are equivalent:
\begin{enumerate}
\item $d$ is an isotone derivation.
\mlabel{it:10001}
\item $d$ is meet-translation.
\mlabel{it:10002}
\item $d(x)= x\wedge d(1)$ for any $x\in L$.
\mlabel{it:10003}
\item $d$ is an inner derivation.
\mlabel{it:10004}
\end{enumerate}
Furthermore, these statements are implied by the linearity of a derivation:
\begin{enumerate}\addtocounter{enumi}{4}
	\item $d$ is a derivation with the {\bf linearity}
	 $d(x\vee y)=d(x)\vee d(y)$ for all $x, y\in L$.
	 \mlabel{it:10005}
\end{enumerate}
If $L$ is distributive, then all the five statements are equivalent.
\mlabel{the:000}
\end{proposition}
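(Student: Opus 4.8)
The plan is to prove the cycle of implications $(i)\Rightarrow(ii)\Rightarrow(iii)\Rightarrow(iv)\Rightarrow(i)$ among the four main statements, then handle the implication $(v)\Rightarrow(ii)$ separately, and finally check that distributivity forces $(i)\Rightarrow(v)$ to close the extended equivalence. Throughout, the point of the ``simple improvement'' is to carry the argument without assuming at the outset that $d$ is a derivation, so each implication must be checked to use only the hypothesis actually available.

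For $(i)\Rightarrow(ii)$: assume $d$ is an isotone derivation. Apply Eq.~\meqref{eq:der} to get $d(x\wedge y)=(d(x)\wedge y)\vee(x\wedge d(y))$. Since $x\wedge y\leq x$, isotonicity gives $d(x\wedge y)\leq d(x)$, and combined with Proposition~\mref{pro:201}\ref{it:2011} (i.e. $d(x\wedge y)\leq x\wedge y\leq y$) we get $d(x\wedge y)\leq d(x)\wedge y$. Hence the term $d(x)\wedge y$ is already $\geq d(x\wedge y)$; together with $x\wedge d(y)\leq d(x\wedge y)$ from Proposition~\mref{pro:201}\ref{it:2012}, the join collapses and one extracts $d(x\wedge y)=x\wedge d(y)$, which is meet-translation. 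For $(ii)\Rightarrow(iii)$: set $y=1$ in $d(x\wedge y)=x\wedge d(y)$ to get $d(x)=d(x\wedge 1)=x\wedge d(1)$ directly. For $(iii)\Rightarrow(iv)$: this is immediate by taking $u=d(1)$ in the definition of the inner derivation $d_u(x)=x\wedge u$. For $(iv)\Rightarrow(i)$: if $d=d_u$ for some $u\in L$, then $d(x\wedge y)=x\wedge y\wedge u$, while $(d(x)\wedge y)\vee(x\wedge d(y))=(x\wedge u\wedge y)\vee(x\wedge y\wedge u)=x\wedge y\wedge u$, so $d$ is a derivation; and $x\leq z$ gives $x\wedge u\leq z\wedge u$, so $d$ is isotone.

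For the implication from $(v)$: assume $d$ is a derivation satisfying the linearity $d(x\vee y)=d(x)\vee d(y)$. Linearity immediately gives isotonicity, since $x\leq y$ implies $d(y)=d(x\vee y)=d(x)\vee d(y)\geq d(x)$; hence $(v)\Rightarrow(i)$, and by the cycle already established $(v)$ implies all of $(i)$--$(iv)$. (Alternatively one can cite~\cite{fer} for the stronger identity $d(x\wedge y)=d(x)\wedge y=x\wedge d(y)$, but the short isotonicity argument suffices for what is claimed.) Finally, suppose $L$ is distributive and $d$ satisfies $(i)$--$(iv)$; by $(iii)$ we have $d(x)=x\wedge d(1)$, so $d(x\vee y)=(x\vee y)\wedge d(1)=(x\wedge d(1))\vee(y\wedge d(1))=d(x)\vee d(y)$ by distributivity, giving $(v)$. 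Thus all five statements are equivalent in the distributive case.

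The only mild subtlety—and the step I would be most careful about—is $(i)\Rightarrow(ii)$, since it is exactly where one must avoid circular use of ``$d$ is a derivation'' beyond the single application of Eq.~\meqref{eq:der}, and where one needs the two one-sided inequalities from Proposition~\mref{pro:201}; note parts~\ref{it:2011}, \ref{it:2012}, \ref{it:2015} of that proposition are stated for $d\in\DO(L)$, which is legitimate here because in $(i)$ we are assuming $d$ is a derivation. The remaining implications are routine substitutions and distributive-law manipulations with no real obstacle.
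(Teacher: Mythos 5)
Your overall architecture matches the paper's: the cycle $(i)\Rightarrow(ii)\Rightarrow(iii)\Rightarrow(iv)\Rightarrow(i)$, a separate argument for $(v)$, and the distributivity computation $d(x\vee y)=(x\vee y)\wedge d(1)=d(x)\vee d(y)$ to close the loop. Two of your steps are self-contained where the paper cites the literature, and both are correct: for $(iv)\Rightarrow(i)$ you verify directly that $d_u$ is an isotone derivation (the paper invokes an example from \cite{xin1}), and for $(v)$ you observe that linearity of a derivation gives isotonicity via $x\leq y\Rightarrow d(y)=d(x\vee y)=d(x)\vee d(y)\geq d(x)$, rather than quoting \cite{fer} for the meet-translation identity. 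These are harmless, arguably cleaner, substitutions.

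There is, however, a slip in your $(i)\Rightarrow(ii)$ step as written. From $x\wedge y\leq x$ and isotonicity you derive $d(x\wedge y)\leq d(x)\wedge y$, i.e.\ an upper bound on $d(x\wedge y)$ by the \emph{first} joinand of $(d(x)\wedge y)\vee(x\wedge d(y))$. Combining this with Eq.~\meqref{eq:der} yields $d(x\wedge y)=d(x)\wedge y$, and the remaining inequality $x\wedge d(y)\leq d(x\wedge y)$ then gives no further information; the claimed conclusion $d(x\wedge y)=x\wedge d(y)$ does not follow from the inequalities you listed. The repair is immediate and is exactly what the paper does: apply isotonicity to $x\wedge y\leq y$ to get $d(x\wedge y)\leq d(y)$, combine with $d(x\wedge y)\leq x\wedge y\leq x$ (Proposition~\mref{pro:201}~\mref{it:2011}) to obtain $d(x\wedge y)\leq x\wedge d(y)$, and conclude equality from Proposition~\mref{pro:201}~\mref{it:2012}. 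Note that this corrected argument does not even need the full derivation identity \meqref{eq:der}, only the two one-sided inequalities; your caution about which hypotheses are in force at this step was well placed, you just bounded the wrong coordinate.
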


\begin{proof}
\mnoindent
\mref{it:10001}$\Rightarrow$\mref{it:10002}
Let $d\in \IDO(L)$ and $x, y\in L$. Since $x\wedge y\leq y$, we have $d(x\wedge y)\leq d(y)$. Also,
$d(x\wedge y)\leq x\wedge y\leq x$ by Proposition \mref{pro:201} \mref{it:2011}. It follows that  $d(x\wedge y)\leq x\wedge d(y)$ and hence
$d(x\wedge y)=x\wedge d(y)$ by Proposition \mref{pro:201} \mref{it:2012}.

 \mnoindent
\mref{it:10002}$\Rightarrow$\mref{it:10003}
Assume that \mref{it:10002} holds. Then $d(x)=d(x\wedge 1)=x\wedge d(1)$ for any $x\in L$, giving \mref{it:10003}.

\mnoindent
\mref{it:10003}$\Rightarrow$\mref{it:10004} The implication is clear.

 \mnoindent
\mref{it:10004}$\Rightarrow$\mref{it:10001} The implication follows from \cite[Example 3.8]{xin1}.

By~\mcite{fer}, a derivation $d$ with the linearity implies that $d$ is meet-translation and hence $d$ satisfies all the conditions~\mref{it:10001} -- \mref{it:10004}.

For the last statement, assume that $(L, \vee, \wedge,  0, 1)$ is a distributive lattice. Let $d\in \IDO(L)$ and $x, y\in L$. Then by the equivalence of \mref{it:10003} and \mref{it:10004}, we obtain
\[ d(x\vee y)=(x\vee y)\wedge d(1)=(x\wedge d(1))\vee (y\wedge d(1))=d(x)\vee d(y). \]
Hence condition~\mref{it:10004} implies condition~\mref{it:10005} and hence all the five conditions are equivalent.
\end{proof}

The distributivity in the last statement of the proposition cannot be removed. In particular, if a lattice $L$ is not  distributive, then $d\in \IDO(L)$ does not necessarily imply that $d(x\vee y)=d(x)\vee d(y)$ for any $x, y\in L$. For example, let $M_{5}=\{0, b_{1}, b_{2}, b_{3}, 1\}$ be the modular lattice whose Hasse diagram is
$$
\begin{tikzpicture}
	\tikzstyle{every node}=[draw,circle,fill=black,node distance=1.0cm,
	minimum size=1.0pt, inner sep=1.0pt]
	\node[circle] (1)                        [label=above :   $1$]{};
	\node[circle] (2) [below   of=1]     [label=left : $b_{2}$]{};
	\node[circle] (3) [ left of=2]             [label=left  : $b_{1}$] {};
	\node[circle] (4) [ right   of=2]     [label=right : $b_{3}$]{};
	\node[circle] (5) [below   of=2]     [label=below: $0$] {};
	
	\draw[-] (1) --   (2); \draw[-] (1) --   (3); \draw[-] (1) --   (4);
	\draw[-] (2) --   (5); \draw[-] (3) --   (5);
	\draw[-] (4) --   (5);
\end{tikzpicture}
$$
Define an operator $d: M_{5}\rightarrow M_{5}$  by
$d(x):=x\wedge b_{1}$ for any $x\in M_{5}$. Then $d\in \IDO(M_{5})$ by Proposition \mref{the:000}, but
$d(b_{2}\vee b_{3})=d(1)=b_{1}\neq 0=d(b_{2})\vee d(b_{3})$.

\begin{corollary}
Let $(L, \vee, \wedge,  0, 1)$ be a  lattice.
\begin{enumerate}
\item There is a bijection between $\IDO(L)$ and $L$.
\mlabel{it:2001}
\item $d\in \IDO(L)$ implies that $d(x\wedge y)=d(x)\wedge d(y)$ for any $x, y\in L$.
\mlabel{it:2002}
\end{enumerate}
\mlabel{cor:200}
\end{corollary}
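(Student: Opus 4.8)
The plan is to prove Corollary~\mref{cor:200} by directly invoking the equivalences established in Proposition~\mref{the:000}, together with the basic facts collected in Proposition~\mref{pro:201}. For part~\mref{it:2001}, I would first observe that condition~\mref{it:10003} of Proposition~\mref{the:000} says every isotone derivation $d$ has the form $d(x)=x\wedge u$ with $u=d(1)$, so the assignment $d\mapsto d(1)$ gives a well-defined map $\IDO(L)\to L$. For surjectivity, I would note that for any $u\in L$ the inner derivation $d_u(x)=x\wedge u$ is isotone (it is a derivation by the discussion after the definition, and it is isotone by Proposition~\mref{the:000}, since inner derivations satisfy~\mref{it:10004}), and $d_u(1)=1\wedge u=u$. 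For injectivity, if $d, d'\in\IDO(L)$ with $d(1)=d'(1)$, then by~\mref{it:10003} we get $d(x)=x\wedge d(1)=x\wedge d'(1)=d'(x)$ for all $x\in L$. Hence $d\mapsto d(1)$ is a bijection, with inverse $u\mapsto d_u$.

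For part~\mref{it:2002}, I would let $d\in\IDO(L)$ and $x,y\in L$, and use condition~\mref{it:10003} to write $d(x\wedge y)=(x\wedge y)\wedge d(1)$. The goal is to identify this with $d(x)\wedge d(y)=(x\wedge d(1))\wedge(y\wedge d(1))$. Using commutativity, associativity, and idempotency of $\wedge$, the right-hand side equals $x\wedge y\wedge d(1)\wedge d(1)=x\wedge y\wedge d(1)$, which matches. This is a one-line lattice computation requiring no distributivity, so there is essentially no obstacle here. Alternatively one could cite Proposition~\mref{pro:201}\mref{it:2015} (idempotence) together with the meet-translation property~\mref{it:10002}: $d(x\wedge y)=x\wedge d(y)=x\wedge d(d(y))=d(x\wedge d(y))$ and then $d(x\wedge d(y))=d(x)\wedge d(y)$ by meet-translation applied again, but the direct computation via~\mref{it:10003} is cleaner.

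I do not anticipate any genuine difficulty in this corollary, since it is a formal consequence of Proposition~\mref{the:000}; the only point that requires a moment's care is checking in part~\mref{it:2001} that the inner derivations $d_u$ genuinely lie in $\IDO(L)$, i.e.\ that they are both derivations and isotone, but this is exactly the content of the equivalence \mref{it:10004}$\Leftrightarrow$\mref{it:10001} in Proposition~\mref{the:000}. I would keep the write-up to a few lines, presenting~\mref{it:2001} as the bijection $d\leftrightarrow d(1)$ and~\mref{it:2002} as the short idempotency computation.
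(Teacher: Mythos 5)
Your proposal is correct and follows essentially the same route as the paper: part (i) is the bijection $d\mapsto d(1)$ with inverse $u\mapsto d_u$, justified via the equivalences of Proposition~\mref{the:000} (the paper phrases this as $fg=\mrep_L$ and $gf=\mrep_{\IDO(L)}$ for $f(d)=d(1)$ and $g(u)=d_u$, which is exactly your surjectivity and injectivity checks), and part (ii) is the same one-line computation $d(x\wedge y)=x\wedge y\wedge d(1)=(x\wedge d(1))\wedge(y\wedge d(1))=d(x)\wedge d(y)$.
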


\begin{proof}
\mref{it:2001}
Define a map $f: \IDO(L)\rightarrow L$ by $f(d)=d(1)$
for any $d\in \IDO(L)$. Also define a map $g:L\to \IDO(L)$ by $g(u)=d_{u}$ for any $u\in L$. Then by Proposition \mref{the:000}, we have
$fg=\mrep_L$ and $gf=\mrep_{\IDO(L)}$. Hence $f$ is a bijection.

\mnoindent
\mref{it:2002} Let $d\in \IDO(L)$ and $x, y\in L$. Then
 $d(x\wedge y)=x\wedge y\wedge d(1)=(x\wedge d(1))\wedge (y\wedge d(1))=d(x)\wedge d(y)$
 by Proposition \mref{the:000}.
\end{proof}

Corollary \mref{cor:200}~\mref{it:2001} suggests that $\IDO(L)$ can be equipped with a natural lattice structure that is isomorphic to the lattice $L$. We will show that this is indeed the case in Proposition~\mref{pp:40}.

\begin{remark}
The converse of Corollary~\mref{cor:200} \mref{it:2002} does not hold.
For example, for a given $u\in L\backslash \{0\}$, define  an operator $d$ on $L$  by $d(x):=u$ for any $x\in L$.
It is clear that $d$ satisfies the condition $d(x\wedge y)=d(x)\wedge d(y)$
for any $x, y\in L$. But $d$ is not a derivation since
$d(0)=u\neq 0$.
\mlabel{re:000}
\end{remark}

\section{Isomorphic classes of differential lattices}
\mlabel{sec:iso}

In this section, we study isomorphic classes of differential lattices. In particular,
we classify isomorphic classes of differential lattices on two families of underlying lattices: the finite chains and the diamond type lattices $M_{n}$\, $(n\geq 3)$.

\subsection {Isomorphic differential lattices}
\mlabel{ss:dl}

\begin{definition}
	Two differential lattices $(L,\vee,\wedge,d, 0, 1)$ and $(L',\vee',\wedge',d',0',1')$ are called \name{isomorphic} if there is an isomorphism of lattices $f:L\to L'$
	such that $fd=d'f$.
When the lattice $L'$ is the same as $L$, we also say that $d$ is \name{isomorphic to} $d'$ and write  $d\cong d'$.

\end{definition}
Thus derivations $d$ and $d'$ on a lattice $L$ are isomorphic
if there exists a lattice automorphism $f: L\rightarrow L$ such that $fd=d'f$.

The relation $\cong$ is an equivalence relation on $\DO(L)$.
The corresponding equivalent classes are called the \name{isomorphic classes of derivations} on $L$. They are isomorphic classes of differential lattices whose underlying lattice is $L$.

\begin{remark}
There is no loss of generality in our approach of working on a fixed underlying lattice. Just observe that the classification of all isomorphic classes of differential lattices is the same as the classification of all isomorphic classes of differential lattices
on a given underlying lattice, as the underlying lattice runs through isomorphic classes of lattices.
\end{remark}

\begin{lemma} \mlabel{lll:30}
Let $(L, \vee, \wedge,  0, 1)$ be a lattice,  $d, d'\in \DO(L)$ such that $d\cong d'$. 
\begin{enumerate}
\item $d(1)=0$ if and only if  $d'(1)=0$.
\mlabel{it:31}
\item \mlabel{it:32}
$|\Fix_{d}(L)|=|\Fix_{d'}(L)|$.
 \end{enumerate}
\end{lemma}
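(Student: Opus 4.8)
The plan is to exploit the defining relation $fd=d'f$ for the lattice automorphism $f:L\to L$ witnessing $d\cong d'$, and to observe that an automorphism preserves both the bottom/top elements and the relations $x\leq y$.

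\textit{Part \mref{it:31}.} First I would note that any lattice automorphism $f$ fixes $1$, since $1$ is the unique top element. From $fd=d'f$ applied at $x=1$ we get $f(d(1))=d'(f(1))=d'(1)$. Now if $d(1)=0$, then $d'(1)=f(d(1))=f(0)=0$, because $f$ also fixes $0$ as the unique bottom element. Conversely, applying the same argument to $f^{-1}$ (which is also an automorphism intertwining $d'$ and $d$, since $fd=d'f$ gives $f^{-1}d'=df^{-1}$), we get that $d'(1)=0$ implies $d(1)=0$. This part is essentially immediate.

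\textit{Part \mref{it:32}.} The key claim is that $f$ restricts to a bijection from $\Fix_d(L)$ onto $\Fix_{d'}(L)$. Indeed, take $x\in\Fix_d(L)$, so $d(x)=x$. Then $d'(f(x))=f(d(x))=f(x)$, so $f(x)\in\Fix_{d'}(L)$. Thus $f(\Fix_d(L))\subseteq\Fix_{d'}(L)$. Running the same argument with $f^{-1}$ in place of $f$ (using $f^{-1}d'=df^{-1}$) gives $f^{-1}(\Fix_{d'}(L))\subseteq\Fix_d(L)$, hence $\Fix_{d'}(L)\subseteq f(\Fix_d(L))$. Therefore $f$ maps $\Fix_d(L)$ bijectively onto $\Fix_{d'}(L)$, and since $f$ is injective, $|\Fix_d(L)|=|\Fix_{d'}(L)|$.

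I do not anticipate any real obstacle here; the only points requiring a moment's care are that a lattice automorphism of a bounded lattice automatically preserves $0$ and $1$ (so that $f(0)=0$ is legitimate in Part \mref{it:31}), and that the intertwining relation $fd=d'f$ can be rewritten as $f^{-1}d'=df^{-1}$ so that the converse directions go through symmetrically. Both are routine, so the proof is short.
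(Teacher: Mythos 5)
Your proof is correct and follows essentially the same route as the paper: both parts use the intertwining relation $fd=d'f$ together with $f(0)=0$, $f(1)=1$, and the fact that $f$ carries $\Fix_d(L)$ into $\Fix_{d'}(L)$. The only cosmetic difference is that you obtain the reverse inclusion via $f^{-1}$ to get an explicit bijection, whereas the paper invokes the symmetry of $d$ and $d'$ to get two injections; the conclusion is the same.
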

\begin{proof}
Assume that $d, d'\in \DO(L)$
and  $d\cong d'$. Then there exists a lattice automorphism
$f: L\rightarrow L$  such that $f(d(x))=d'(f(x))$ for any $x\in L$.

\mnoindent
\mref{it:31} If $d(1)=0$, then
 $d'(1)=d'(f(1))=f(d(1))=f(0)=0$, since $f(1)=1$ and $f(0)=0$.
 By the symmetry of $d$ and $d'$ in the claim, $d'(1)=0$ implies that  $d(1)=0$. Thus \mref{it:31} holds.

\mnoindent
\mref{it:32} Since $f(d(x))=d'(f(x))$ for any $x\in L$, we have
 $f(\Fix_{d}(L))\subseteq \Fix_{d'}(L)$ by Corollary \mref{c:200} and so the restriction $f|_{\Fix_{d}(L)}$ of $f$ to $\Fix_{d}(L)$
 is an injective map from $\Fix_{d}(L)$ to $\Fix_{d'}(L)$. Thus $|\Fix_{d}(L)|\leq|\Fix_{d'}(L)|$.
By the symmetry of $d$ and $d'$, we obtain $|\Fix_{d'}(L)|\leq|\Fix_{d}(L)|$. Hence \mref{it:32} holds.
\end{proof}

We next show that the isomorphic classes of the zero derivation $\textbf{0}_L$ and the identity derivation $ \mrep_{L}$ only have one element.

\begin{lemma}
Let $(L, \vee, \wedge,  0, 1)$ be a lattice and $d\in \DO(L)$.
\begin{enumerate}
\item $d\cong  \mrep_{L}$ if and only if $d=  \mrep_{L}$.
\mlabel{it:001}
\item  $d\cong \textbf{0}_{L}$ if and only if $d= \textbf{0}_{L}$.
\mlabel{it:002}
\end{enumerate}
\mlabel{lem:00}
\end{lemma}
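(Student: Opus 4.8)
The plan is to prove both equivalences by the same strategy: the nontrivial direction is always ``$d \cong \mrep_L$ implies $d = \mrep_L$'' (resp. ``$d \cong \mathbf{0}_L$ implies $d = \mathbf{0}_L$''), since the reverse implications are immediate from reflexivity of $\cong$. So suppose $f : L \to L$ is a lattice automorphism with $fd = \mrep_L f$, i.e. $f(d(x)) = f(x)$ for all $x \in L$. Since $f$ is a bijection, this forces $d(x) = x$ for all $x$, that is $d = \mrep_L$. This handles part \mref{it:001} in one line.

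For part \mref{it:002}, suppose $f$ is a lattice automorphism with $f(d(x)) = \mathbf{0}_L(f(x)) = 0$ for all $x \in L$. Again using injectivity of $f$ together with $f(0) = 0$ (automorphisms of a bounded lattice fix $0$ and $1$), we get $d(x) = 0$ for all $x$, so $d = \mathbf{0}_L$. Alternatively, one can invoke Lemma~\mref{lll:30}: $\mathbf{0}_L(1) = 0$, so $d(1) = 0$ by \mref{it:31}; but by Proposition~\mref{pro:201}\mref{it:2011} we have $d(x) \le x$ and $d$ is isotone on the principal... actually the cleanest route avoiding isotonicity is just the direct injectivity argument above, which needs nothing beyond the fact that lattice isomorphisms are bijective and preserve $0$ and $1$.

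There is essentially no obstacle here; the proof is a direct unwinding of the definition of $\cong$ combined with injectivity of the conjugating automorphism. The only point worth stating explicitly is that an isomorphism of bounded lattices necessarily sends $0$ to $0$ and $1$ to $1$ (it preserves the order, hence the least and greatest elements), which is already used in Lemma~\mref{lll:30} and which we will simply cite or note in passing. I would write the proof as two short paragraphs, one per item, each of the form: ``Let $f$ be a lattice automorphism with $fd = d' f$; apply $f^{-1}$ (or injectivity of $f$) to conclude.'' No induction, no case analysis, no appeal to the structural results on $\DO(L)$ is needed.
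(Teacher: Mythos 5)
Your proof is correct and follows essentially the same route as the paper's: in each part the nontrivial direction is handled by unwinding the intertwining relation $fd=d'f$ and using bijectivity of the automorphism $f$ (together with $f(0)=0$ in the second part) to cancel $f$. The only cosmetic remark is that the half-started detour through Lemma~\mref{lll:30} in your second paragraph should be deleted, since you correctly settle on the direct injectivity argument.
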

\begin{proof}
\mref{it:001}
 Assume that $d\cong  \mrep_{L}$. Then  there exists a lattice automorphism $f: L\rightarrow L$ such that
$df=f  \mrep_{L}=f=\mrep_{L}f $. Thus $d= \mrep_{L}$ since $f$ is bijective.

\mnoindent
\mref{it:002} Assume that $d\cong \textbf{0}_{L}$. Then  there exists a lattice automorphism $f: L\rightarrow L$ such that
 $df=f\textbf{0}_{L}$. Since $f$ is bijective and $f(0)=0$, we have $f\textbf{0}_{L}=\textbf{0}_L=\textbf{0}_{L} f$,
 and so $df=\textbf{0}_{L}f$. Thus
 $d= \textbf{0}_{L}$ since $f$ is bijective.
\end{proof}

One derivation gives rise to others as shown below.
\begin{proposition}
Let  $(L, \vee, \wedge,  0, 1)$ be a lattice and $d$ in $\DO(L)$. Give $u\in L$ with $u\leq d(1)$ and define  an operator $d'$ on $L$ by
	$$
	d'(x):=
	\begin{cases}
	u,  & \textrm{if}~ x=1; \\
	d(x),  & \textrm{otherwise}.
	\end{cases}
	$$
	Then $d'$ is in $\DO(L)$.
	\mlabel{por:111}
\end{proposition}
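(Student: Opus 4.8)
The plan is to verify directly that the operator $d'$ satisfies the derivation identity~\meqref{eq:der}, namely $d'(x\wedge y)=(d'(x)\wedge y)\vee(x\wedge d'(y))$ for all $x,y\in L$, by splitting into cases according to whether $x$ or $y$ equals $1$. If neither $x$ nor $y$ equals $1$, then since $L$ is bounded we need not worry about $x\wedge y$ being forced to $1$ (it equals $1$ only when $x=y=1$), so $d'(x\wedge y)=d(x\wedge y)$, $d'(x)=d(x)$, $d'(y)=d(y)$, and the identity for $d'$ is exactly the identity for $d$, which holds by hypothesis. The remaining case, by the symmetry of the identity in $x$ and $y$, is when $x=1$; here $d'(1)=u$ and $d'(y)=d(y)$ (if $y\neq 1$) or $d'(y)=u$ (if $y=1$), and we must check $d'(1\wedge y)=d'(y)=(u\wedge y)\vee(1\wedge d'(y))=(u\wedge y)\vee d'(y)$.

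The crux, then, is the identity $d'(y)=(u\wedge y)\vee d'(y)$, which amounts to showing $u\wedge y\leq d'(y)$ for every $y\in L$. When $y=1$ this reads $u\wedge 1=u\leq u=d'(1)$, which is trivial. When $y\neq 1$, it reads $u\wedge y\leq d(y)$, and this is where the hypothesis $u\leq d(1)$ enters. Indeed, from $u\leq d(1)$ we get $u\wedge y\leq d(1)\wedge y\leq d(1\wedge y)=d(y)$, where the middle inequality $d(1)\wedge y\leq d(1\wedge y)$ is precisely Proposition~\mref{pro:201}\mref{it:2012} applied with the roles $x\wedge d(z)\leq d(x\wedge z)$ taken at $x=y$, $z=1$. (Alternatively, and even more cleanly: since $u\leq d(1)$, Proposition~\mref{pro:201}\mref{it:2013} gives $d(u)=u$, and then $u\wedge y = d(u)\wedge y \le d(u \wedge y) \le d(y)$ using~\mref{it:2012} again and then Proposition~\mref{pro:201}\mref{it:2011} together with isotonicity—but the first route avoids needing isotonicity, which $d$ may lack, so I would use $u\wedge y\leq d(1)\wedge y\leq d(1\wedge y)=d(y)$.)

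So the key steps, in order, are: (1) reduce the verification of~\meqref{eq:der} for $d'$ to the case where one of the arguments is $1$, using that $d'$ agrees with $d$ off the element $1$ and that $x\wedge y=1$ forces $x=y=1$ in a lattice; (2) in the case $x=1$, reduce the identity to the inequality $u\wedge y\leq d'(y)$; (3) establish this inequality by the two-line computation $u\wedge y\leq d(1)\wedge y\leq d(1\wedge y)=d(y)=d'(y)$ for $y\neq 1$, using $u\leq d(1)$ and Proposition~\mref{pro:201}\mref{it:2012}, and note the trivial case $y=1$ separately. I expect step~(1) to be the only place requiring a small amount of care — one must observe that the sublattice-type reasoning "$d'=d$ except at $1$" is only legitimate because $x\wedge y\neq 1$ whenever $\{x,y\}\neq\{1\}$ — while step~(3), the genuine mathematical content, is immediate from the cited properties of derivations. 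No obstacle of real difficulty is anticipated; the statement is essentially the assertion that one may freely lower the value $d(1)$ to any smaller element without destroying the derivation property.
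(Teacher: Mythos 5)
Your proof is correct and follows essentially the same route as the paper's: a case split on whether the arguments equal $1$, with the only substantive point being the inequality $u\wedge y\leq d(1)\wedge y\leq d(1\wedge y)=d(y)$ obtained from $u\leq d(1)$ and Proposition~\mref{pro:201}\mref{it:2012}, which is exactly the paper's key step $x\wedge d'(1)\leq x\wedge d(1)\leq d(x)$. Your organization by symmetry and the explicit reduction to $u\wedge y\leq d'(y)$ is a slight streamlining but not a different argument.
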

\begin{proof}
Let $d, d'$ and $u$ be as given in the proposition. Let $x, y\in L$.
	
	If $x, y\in L\backslash \{1\}$, then 
	$x\wedge y\in L\backslash \{1\}$ and so
	$$d'(x\wedge y)=d(x\wedge y)=(d(x)\wedge y)\vee (x\wedge d(y))=(d'(x)\wedge y)\vee (x\wedge d'(y))$$ 
	since $d\in \DO(L)$.

	If $x\in L\backslash \{1\}, y=1$, then
	since $d'(1)=u\leq d(1)$, we have
	$x\wedge d'(1)\leq x\wedge d(1)\leq d(x)$ by Proposition \mref{pro:201} \mref{it:2012} and so
	$$d'(x\wedge y)=d'(x)=d(x)=d(x)\vee (x\wedge d'(1))=(d'(x)\wedge y)\vee (x\wedge d'(y)).$$
	
	If $y\in L\backslash \{1\}, x=1$, then  we similarly have $d'(x\wedge y)=(d'(x)\wedge y)\vee (x\wedge d'(y))$.
	
	If $x=y=1$, then clearly, $d'(x\wedge y)=(d'(x)\wedge y)\vee (x\wedge d'(y))$.
	
	Thus we conclude that  $d'$ is in $\DO(L)$.
\end{proof}

We next see that there is a good supply of explicitly defined derivations on any lattice.
\begin{proposition}\mlabel{pro:000}
Let $(L, \vee, \wedge,  0, 1)$ be a lattice and $u\in L$.
\begin{enumerate}
\item Define  operators $\chi^{(u)}$ and
 $\eta^{(u)}$ on $L$ as follows:
$$\chi^{(u)}(x):=
    \begin{cases}
      u,  & \textrm{if}~ x=1; \\
      x,  & \textrm{otherwise}.
    \end{cases}\quad and \quad
    \eta^{(u)}(x):=
    \begin{cases}
      u,  & \textrm{if}~ u\leq x; \\
      x,  & \textrm{otherwise}.
    \end{cases}$$
Then $\chi^{(u)}$ and $\eta^{(u)}$ are in $\DO(L)$.
In particular, $\eta^{(0)}=\textbf{0}_{L}$
and $\chi^{(1)}=\eta^{(1)}=\mrep_{L}$.
\mlabel{it:0001}
\item \mlabel{it:0002}
Define an operator $\lambda^{(u)}$
 on $L$ by:
$$ \lambda^{(u)}(x)=
    \begin{cases}
      x,  & \textrm{if}~ x\leq u; \\
      0,  & \textrm{otherwise}.
    \end{cases}.
    $$
Then $\lambda^{(u)}$ is in $\DO(L)$ if and only if
 $L$ satisfies the condition:
\begin{equation}
	\mlabel{eq:con}
	\tforall ~ x, y\in L,	x\nleq u ~~ and ~~ y\nleq u ~~imply ~~ that ~~ x\wedge y\nleq u ~~or ~~x\wedge y=0.
\end{equation}
\end{enumerate}
\end{proposition}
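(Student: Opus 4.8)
The plan is to prove the biconditional in part \mref{it:0002} directly, verifying in each direction that the derivation identity $\lambda^{(u)}(x\wedge y)=(\lambda^{(u)}(x)\wedge y)\vee(x\wedge\lambda^{(u)}(y))$ holds for all $x,y\in L$ by a case analysis on whether $x\leq u$ and whether $y\leq u$. First I would record the two easy cases that hold unconditionally. If $x\leq u$ and $y\leq u$, then $x\wedge y\leq u$, so the left side is $x\wedge y$ while the right side is $(x\wedge y)\vee(x\wedge y)=x\wedge y$. If exactly one of them, say $x\leq u$ but $y\nleq u$, then $x\wedge y\leq u$ forces the left side to be $x\wedge y$; on the right, $\lambda^{(u)}(x)=x$ and $\lambda^{(u)}(y)=0$, so the right side is $(x\wedge y)\vee(x\wedge 0)=x\wedge y$. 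Thus the only case where anything can go wrong is $x\nleq u$ and $y\nleq u$, where the right side is $(0\wedge y)\vee(x\wedge 0)=0$, and the identity holds precisely when $\lambda^{(u)}(x\wedge y)=0$, i.e. when either $x\wedge y\nleq u$ (so $\lambda^{(u)}(x\wedge y)=0$ by definition) or $x\wedge y\leq u$ but $\lambda^{(u)}(x\wedge y)=x\wedge y=0$.

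For the ``if'' direction, I would assume condition \meqref{eq:con} and observe that in the remaining case $x\nleq u$, $y\nleq u$, the hypothesis gives $x\wedge y\nleq u$ or $x\wedge y=0$; in the first subcase $\lambda^{(u)}(x\wedge y)=0$ by definition, and in the second $\lambda^{(u)}(x\wedge y)=\lambda^{(u)}(0)=0$. Either way the left side is $0$, matching the right side, so $\lambda^{(u)}\in\DO(L)$. For the ``only if'' direction, I would assume $\lambda^{(u)}\in\DO(L)$ and take arbitrary $x,y\in L$ with $x\nleq u$ and $y\nleq u$; applying the derivation identity as computed above yields $\lambda^{(u)}(x\wedge y)=0$. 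Now $\lambda^{(u)}(z)=0$ means, by the definition of $\lambda^{(u)}$, that either $z\nleq u$ (the ``otherwise'' branch) or $z\leq u$ and $z=0$; applied to $z=x\wedge y$ this is exactly the conclusion $x\wedge y\nleq u$ or $x\wedge y=0$ of \meqref{eq:con}.

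The argument is essentially a bookkeeping exercise, so there is no serious obstacle; the only point requiring a little care is the logical unpacking of ``$\lambda^{(u)}(x\wedge y)=0$'' in both directions — one must note that $\lambda^{(u)}(z)=0$ is equivalent to ``$z\nleq u$, or $z=0$'', which is precisely the disjunction appearing in \meqref{eq:con}. I would also remark at the end that, just as with $\chi^{(u)}$ and $\eta^{(u)}$, this recovers $\lambda^{(0)}=\textbf{0}_L$ and $\lambda^{(1)}=\mrep_L$ as special cases, and that condition \meqref{eq:con} is automatically satisfied when $u=0$ (since then $x\nleq u$ simply says $x\neq 0$, and... actually one should check $x\wedge y$ could be $0$) or $u=1$ (the hypothesis $x\nleq 1$ is never met), so the construction is consistent with part \mref{it:0001}.
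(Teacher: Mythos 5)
Your treatment of part \mref{it:0002} is correct and is essentially the paper's own argument: the same case split on whether $x\leq u$ and $y\leq u$, the observation that the two cases where at least one of $x,y$ lies below $u$ hold unconditionally (using $\lambda^{(u)}(w)\leq w$), and the reduction of the remaining case to the equivalence $\lambda^{(u)}(x\wedge y)=0 \iff (x\wedge y\nleq u \text{ or } x\wedge y=0)$, which is read off in both directions. Your explicit unpacking of ``$\lambda^{(u)}(z)=0$'' is a slightly cleaner way to phrase the ``only if'' direction than the paper's, and the closing remarks about $\lambda^{(0)}$, $\lambda^{(1)}$ and the vacuity of \meqref{eq:con} for $u=0,1$ are fine (for $u=0$ the conclusion ``$x\wedge y\neq 0$ or $x\wedge y=0$'' is a tautology, so there is nothing further to check).

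However, the proposal has a genuine gap: it proves only part \mref{it:0002} and never addresses part \mref{it:0001}, i.e.\ the claims that $\chi^{(u)}$ and $\eta^{(u)}$ are derivations. These are not consequences of the $\lambda^{(u)}$ computation and each needs its own verification. For $\eta^{(u)}$ the natural route is a case analysis on whether $u\leq x$ and $u\leq y$: if $u\nleq x$ then $\eta^{(u)}(x)=x$ and $u\nleq x\wedge y$, so both sides of the Leibniz identity equal $x\wedge y$ (using $\eta^{(u)}(y)\leq y$); symmetrically for $u\nleq y$; and if $u\leq x$ and $u\leq y$ then $u\leq x\wedge y$ and both sides equal $u$. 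For $\chi^{(u)}$ one can either do a similar direct check with cases on whether $x$ or $y$ equals $1$, or (as the paper does) invoke Proposition~\mref{por:111} with $d=\mrep_L$ and the observation $u\leq 1=\mrep_L(1)$; either way, the key point in the mixed case $x\neq 1$, $y=1$ is that $x\wedge\chi^{(u)}(1)=x\wedge u\leq x=\chi^{(u)}(x)$, so the join collapses. Without some such argument the proposition is only half proved.
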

\begin{proof}
\mref{it:0001}
Since $\mrep_{L}$ is in $\DO(L)$ and $u\leq 1=\mrep_{L}(1)$, we have
 $\chi^{(u)}\in \DO(L)$ by Proposition \mref{por:111}.

To prove that $\eta^{(u)}$ is in $\DO(L)$, let $x, y\in L$ and distinguish several cases.

If  $u \nleq x$, then $\eta^{(u)}(x)=x$ and $u\nleq x\wedge y$.  It follows that
$\eta^{(u)}(x\wedge y)=x\wedge y$. Noting that
 $ \eta^{(u)}(y)\leq y$, we obtain
$$\eta^{(u)}(x\wedge y)=x\wedge y=(x\wedge y)\vee (x\wedge \eta^{(u)}(y))=(\eta^{(u)}(x)\wedge y)\vee (x\wedge \eta^{(u)}(y)).$$

If  $u\nleq y$, then we similarly get 
$$\eta^{(u)}(x\wedge y)=(\eta^{(u)}(x)\wedge y)\vee (x\wedge \eta^{(u)}(y)).$$

If $u\leq x$ and $u\leq y$,  then $\eta^{(u)}(x)=\eta^{(u)}(y)=u$ and
 $u\leq x\wedge y$, which implies 
$$\eta^{(u)}(x\wedge y)=u=(u\wedge y)\vee (x\wedge u)=(\eta^{(u)}(x)\wedge y)\vee (x\wedge \eta^{(u)}(y)).$$

Therefore, we conclude that $\eta^{(u)}$ is in $\DO(L)$.

\mnoindent
\mref{it:0002}  Assume that $L$ satisfies condition (\mref{eq:con}).
By definition, $ \lambda^{(u)}(w)\leq w$ for any $w\in L$.
 To show that  $\lambda^{(u)}\in \DO(L)$,
let $x, y\in L$ and distinguish several cases.

If $x\leq u$, then $\lambda^{( u)}(x)=x$ and $x\wedge y\leq u$. Since
$ \lambda^{(u)}(y)\leq y$, we have 
$$\lambda^{(u)}(x\wedge y)=x\wedge y=(x\wedge y)\vee (x\wedge \lambda^{( u)}(y))=
(\lambda^{(u)}(x)\wedge y)\vee (x\wedge \lambda^{( u)}(y)).$$

If $y\leq u$, then we similarly obtain
$$\lambda^{(u)}(x\wedge y)=(\lambda^{( u)}(x)\wedge y)\vee (x\wedge \lambda^{(u)}(y)).$$

If $x\nleq u$ and $y\nleq u$,  then
$\lambda^{( u)}(x)=\lambda^{( u)}(y)=0$, and either
$x\wedge y\nleq  u$ or $x\wedge y=0$ by condition (\mref{eq:con}). It follows that
$$\lambda^{(u)}(x\wedge y)=0=(0\wedge y)\vee (x\wedge 0)=(\lambda^{( u)}(x)\wedge y)\vee (x\wedge \lambda^{( u)}(y)).$$
Therefore, we get $\lambda^{(u)}\in \DO(L)$.

Conversely, suppose that $\lambda^{(u)}$ is in $\DO(L)$. To show that $L$ satisfies
condition (\mref{eq:con}), let $x, y\in L$ such that $x\nleq u$ and $ y\nleq u$.
Then $\lambda^{( u)}(x)=\lambda^{( u)}(y)=0$ and so
$$\lambda^{(u)}(x\wedge y)=(\lambda^{( u)}(x)\wedge y)\vee (x\wedge \lambda^{( u)}(y))=0.$$ This implies
$x\wedge y\nleq  u$ or $x\wedge y=0$ by the definition of $\lambda^{(u)}$. Therefore $L$ satisfies condition (\mref{eq:con}).
\end{proof}

\begin{example}
\begin{enumerate}
\item
 Let $L$ be a chain and $u\in L$. It is clear that $L$ satisfies
condition (\mref{eq:con}). So
$\lambda^{(u)}\in \DO(L)$ by Proposition \mref{pro:000}.

\item Let $B_{8}=\{0, a, b, c, u, v, w, 1\}$ be the $8$-element Boolean lattice whose Hasse diagram is given by
$$
\begin{tikzpicture}
\tikzstyle{every node}=[draw,circle,fill=black,node distance=0.8cm,
minimum size=0.8pt, inner sep=0.8pt]
\node[circle] (1)                        [label=above :   $1$]{};
\node[circle] (2)   [below   of=1]                      [label=left :   $v$]{};
\node[circle] (3) [ left of=2]     [label=left : $u$]{};
\node[circle] (4) [ right of=2]             [label=right  : $w$] {};
\node[circle] (5)   [below   of=3]                      [label=left :   $a$]{};
\node[circle] (6) [below   of=2]     [label=left : $b$]{};
\node[circle] (7) [below  of=4]             [label=right  : $c$] {};
\node[circle] (8) [below   of=6]     [label=below: $0$] {};

\draw[-] (1) --   (2); \draw[-] (1) --   (3); \draw[-] (1) --   (4);
 \draw[-] (3) --   (5); \draw[-] (3) --   (6);
\draw[-] (2) --   (5); \draw[-] (2) --   (7); \draw[-] (4) --   (6);
\draw[-] (4) --   (7);  \draw[-] (5) --   (8); \draw[-] (6) --   (8);
\draw[-] (7) --   (8);
\end{tikzpicture}
$$
It is easy to verify that $B_{8}$ satisfies condition (\mref{eq:con}) for $u, v, w$,
while $B_{8}$ does not satisfy condition (\mref{eq:con}) for $a, b, c$.
So by Proposition \mref{pro:000}, $\lambda^{(u)}, \lambda^{(v)}$ and $\lambda^{(w)}$ are in $\DO(B_{8})$,
 but  $\lambda^{(a)}, \lambda^{(b)}$ and $\lambda^{(c)}$ are not in $\DO(B_{8})$.
 \end{enumerate}
 \mlabel{exm:000}
\end{example}

\begin{lemma}
Let $(L, \vee, \wedge,  0, 1)$ be a lattice. 
\begin{enumerate}
\item  $\chi^{(0)}\not\cong d$ for any $d\in \DO(L)$ with $d(1)\neq 0$. In particular,
$\chi^{(0)}\not\cong \eta^{(u)}$, $\chi^{(0)}\not\cong \chi^{(u)}$ and $\chi^{(0)}\not\cong d_{u}$ for any $u\in L\backslash \{0\}$.
\mlabel{it:3001}
\item If $|L|\geq 4$, then $\chi^{(u)}\not\cong d_{v}$ or $\chi^{(v)}\not\cong d_{u}$
for any  $u, v\in L\backslash\{0, 1\}$ with $u\neq v$.
\mlabel{it:3002}
\end{enumerate}
\mlabel{lem:3003}
 \end{lemma}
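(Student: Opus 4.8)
First I would compute the fixed-point sets of all the operators involved, since Lemma~\ref{lll:30}\ref{it:32} tells us that isomorphic derivations have equicardinal fixed-point sets, and Lemma~\ref{lll:30}\ref{it:31} says that $d(1)=0$ is an isomorphism invariant. For part~\ref{it:3001}, note that $\chi^{(0)}$ sends $1\mapsto 0$, so $\chi^{(0)}(1)=0$; hence if $d\in\DO(L)$ has $d(1)\neq 0$, then $\chi^{(0)}\not\cong d$ by Lemma~\ref{lll:30}\ref{it:31}. Then I would just observe that for $u\in L\backslash\{0\}$ we have $\eta^{(u)}(1)=u\neq 0$ (since $u\leq 1$), $\chi^{(u)}(1)=u\neq 0$, and $d_u(1)=1\wedge u=u\neq 0$, so none of these is isomorphic to $\chi^{(0)}$. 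This part is essentially immediate.

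For part~\ref{it:3002}, the plan is to argue by contradiction: suppose both $\chi^{(u)}\cong d_v$ and $\chi^{(v)}\cong d_u$, with $u,v\in L\backslash\{0,1\}$, $u\neq v$, and $|L|\geq 4$. I would first identify the fixed-point sets. By Corollary~\ref{c:200}, $\Fix_{d_v}(L)=d_v(L)=\{x\wedge v\mid x\in L\}=\{x\in L\mid x\leq v\}$, the principal ideal $(v]$; similarly $\Fix_{d_u}(L)=(u]$. On the other hand $\chi^{(u)}$ fixes every element except possibly $1$, and $\chi^{(u)}(1)=u\neq 1$, so $\Fix_{\chi^{(u)}}(L)=L\backslash\{1\}$, which has cardinality $|L|-1$; likewise $\Fix_{\chi^{(v)}}(L)=L\backslash\{1\}$. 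So $\chi^{(u)}\cong d_v$ forces $|(v]|=|L|-1$, i.e. $(v]=L\backslash\{1\}$, which means $v$ is the unique coatom of $L$ and every non-top element lies below $v$; symmetrically $\chi^{(v)}\cong d_u$ forces $(u]=L\backslash\{1\}$, so $u$ is also the unique coatom and every non-top element lies below $u$. But then in particular $u\leq v$ and $v\leq u$, giving $u=v$, a contradiction. (Alternatively, once $(v]=L\backslash\{1\}$ we get $u\leq v$ and $v\leq u<1$ already forces $v\leq v$ trivially but $u<1$ so $u\leq v$, and symmetrically $v\leq u$, done.)

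I expect the only mildly delicate point to be making sure the fixed-point count for $\chi^{(u)}$ is exactly $|L|-1$ and not more: this uses $u\neq 1$ so that $1\notin\Fix_{\chi^{(u)}}(L)$, and the definition gives $\chi^{(u)}(x)=x$ for all $x\neq 1$, so indeed $\Fix_{\chi^{(u)}}(L)=L\backslash\{1\}$ exactly. The hypothesis $|L|\geq 4$ is what makes the argument nontrivial and is used implicitly to guarantee $L\backslash\{0,1\}$ can contain two distinct elements in the first place; the contradiction itself does not need it beyond that. One should also double-check that $d_v,d_u\in\DO(L)$ (so that Corollary~\ref{c:200} applies) — this is already recorded in Section~\ref{sec:der}, where inner derivations are shown to be derivations. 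No separate case analysis on the structure of $L$ is needed; the whole argument runs through the two invariants from Lemma~\ref{lll:30}.
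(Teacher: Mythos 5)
Part (i) of your proposal is correct and coincides with the paper's argument: both rest on the invariant $d(1)=0$ from Lemma~\mref{lll:30}~\mref{it:31} together with the computations $\eta^{(u)}(1)=\chi^{(u)}(1)=d_u(1)=u$.

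For part (ii) there is a genuine gap. Your key step is: ``$\chi^{(u)}\cong d_v$ forces $|(v]|=|L|-1$, i.e.\ $(v]=L\backslash\{1\}$.'' The inference from the cardinality equality to the set equality is only valid when $L$ is \emph{finite}. The lemma is stated for an arbitrary bounded lattice with $|L|\geq 4$, and for infinite $L$ one has $|L\backslash\{1\}|=|L|$, so a principal ideal $(v]$ can have the same cardinality as $L\backslash\{1\}$ while being a proper subset of it (e.g.\ adjoin to an infinite chain with top $v$ an extra atom $w$ incomparable to the chain and a new top $1$: then $(v]$ is infinite of full cardinality but omits $w$). Since Lemma~\mref{lll:30}~\mref{it:32} records only the equality of cardinalities, the two invariants you rely on are not enough to conclude $u\leq v$ in general. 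The gap is repairable without changing your strategy: the proof of Lemma~\mref{lll:30}~\mref{it:32} actually shows that the automorphism $f$ witnessing $\chi^{(u)}\cong d_v$ satisfies $f(\Fix_{\chi^{(u)}}(L))=\Fix_{d_v}(L)$, and since $f(1)=1$ forces $f(L\backslash\{1\})=L\backslash\{1\}$, you get the set equality $(v]=L\backslash\{1\}$ directly, after which your contradiction $u\leq v$ and $v\leq u$ goes through. For comparison, the paper avoids fixed-point sets entirely: it picks $a\in L\backslash\{0,1\}$ with $f(a)=u$, notes $\chi^{(u)}(a)=a$, and computes $u=f(\chi^{(u)}(a))=d_v(f(a))=u\wedge v$, giving $u\leq v$ with no finiteness or cardinality considerations at all; that element-chase is the more economical route here.
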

 \begin{proof}
\mref{it:3001}
  Since $\chi^{(0)}(1)=0$, Lemma~\mref{lll:30} gives $\chi^{(0)}\not\cong d$ for any $d\in \DO(L)$ with $d(1)\neq 0$, which implies that
 $\chi^{(0)}\not\cong \eta^{(u)}$, $\chi^{(0)}\not\cong \chi^{(u)}$ and $\chi^{(0)}\not\cong d_{u}$ for any $u\in L\backslash \{0\}$,
 since  $\eta^{(u)}(1)=\chi^{(u)}(1)=d_{u}(1)=u\neq 0$.

\mnoindent
\mref{it:3002} Assume that $|L|\geq 4$ and let $u, v\in L\backslash\{0, 1\}$ with $u\neq v$.

Suppose that $\chi^{(u)}\cong d_v$ and $\chi^{(v)}\cong d_u$. Then $\chi^{(u)}\cong d_{v}$ means that there exists a lattice automorphism
$f: L\rightarrow L$ such that $f(\chi^{(u)}(x))=d_{v}(f(x))$ for any $x\in L$.
Since $f(0)=0$ and $f(1)=1$, there
exists $a\in L\backslash \{0, 1\}$ such that $f(a)=u$. Consequently, we have $\chi^{(u)}(a)=a$ and so
$u=f(a)=f(\chi^{(u)}(a))=d_{v}(f(a))=d_{v}(u)=u\wedge v$. Thus $u\leq v$.

Similarly,  $\chi^{(v)}\cong d_{u}$ implies $v\leq u$. Then $u=v$, a contradiction. Therefore, statement~\mref{it:3002} holds.
\end{proof}

\begin{corollary}
Let $(L, \vee, \wedge,  0, 1)$ be a lattice. 
\begin{enumerate}
\item  If $|L|\geq 3$, then there are at least four
isomorphic classes of derivations on $L$.
\mlabel{it:20001}
\item If $|L|\geq 4$, then there are at least five
isomorphic classes of derivations on $L$.
\mlabel{it:20002}
\end{enumerate}
\mlabel{cor:2000}
\end{corollary}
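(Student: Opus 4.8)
The plan is to prove both parts by exhibiting, from among the explicitly constructed derivations already available, a list of four (resp. five) elements of $\DO(L)$ and checking that no two of them are isomorphic; the count of isomorphic classes is then at least the length of the list. The only tools needed are the singleton-class facts from Lemma~\ref{lem:00} (the classes of $\textbf{0}_{L}$ and $\mrep_{L}$ each contain only one element), the obstruction from Lemma~\ref{lll:30}\ref{it:31} (isomorphic derivations agree on the question of whether the top element is sent to $0$), and Lemma~\ref{lem:3003}. Since all of $\textbf{0}_{L}$, $\mrep_{L}$, $\chi^{(u)}$ and $d_{u}$ are known to be derivations (Proposition~\ref{pro:000} and the discussion of inner derivations), no new constructions are required, and the whole argument is bookkeeping: for each unordered pair on the list, name the lemma that separates it.

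For part~\ref{it:20001}, assuming $|L|\geq 3$ there is an element $a$ with $0<a<1$, and I would take the four derivations $\textbf{0}_{L}$, $\mrep_{L}$, $\chi^{(0)}$, $d_{a}$. By Lemma~\ref{lem:00} it suffices to check that $\chi^{(0)}$ and $d_{a}$ each differ, \emph{as operators}, from $\textbf{0}_{L}$ and from $\mrep_{L}$, and that $\chi^{(0)}\not\cong d_{a}$. The operator inequalities are instantaneous: $\chi^{(0)}(1)=0\neq 1$ and $\chi^{(0)}(a)=a\neq 0$, while $d_{a}(1)=a\notin\{0,1\}$; so none of these coincides with $\textbf{0}_{L}$ or $\mrep_{L}$, and hence (by Lemma~\ref{lem:00}) is not isomorphic to either. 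Finally $\chi^{(0)}\not\cong d_{a}$ is exactly Lemma~\ref{lem:3003}\ref{it:3001}, since $d_{a}(1)=a\neq 0$. This gives the four required classes, and note that the hypothesis $|L|\geq 3$ was used precisely to produce the intermediate element $a$ (needed already to separate $\chi^{(0)}$ from $\textbf{0}_{L}$).

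For part~\ref{it:20002}, assuming $|L|\geq 4$ there are two \emph{distinct} elements $u\neq v$ with $0<u,v<1$. I would keep the four derivations of part~\ref{it:20001} (with $v$ in the role of $a$) and add $\chi^{(u)}$ as the fifth. The derivation $\chi^{(u)}$ differs as an operator from $\mrep_{L}$ and from $\textbf{0}_{L}$ (its value at $1$ is $u\notin\{0,1\}$), hence is isomorphic to neither by Lemma~\ref{lem:00}; it is not isomorphic to $\chi^{(0)}$ by Lemma~\ref{lem:3003}\ref{it:3001} since $\chi^{(u)}(1)=u\neq 0$. The one genuinely delicate point — and the step I expect to be the main (though mild) obstacle — is separating $\chi^{(u)}$ from $d_{v}$: Lemma~\ref{lem:3003}\ref{it:3002} only asserts that \emph{one} of $\chi^{(u)}\not\cong d_{v}$, $\chi^{(v)}\not\cong d_{u}$ holds, so the argument must be set up symmetrically. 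Since $u$ and $v$ play interchangeable roles in the list, I would simply relabel so that $\chi^{(u)}\not\cong d_{v}$; then $\{\textbf{0}_{L},\mrep_{L},\chi^{(0)},d_{v},\chi^{(u)}\}$ are pairwise non-isomorphic, yielding the five required classes.
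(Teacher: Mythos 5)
Your proposal is correct and follows essentially the same route as the paper: exhibit an explicit list, separate $\textbf{0}_{L}$ and $\mrep_{L}$ from the rest via Lemma~\mref{lem:00}, separate $\chi^{(0)}$ from the rest via Lemma~\mref{lem:3003}~\mref{it:3001}, and handle the $\chi^{(u)}$ versus $d_{v}$ pair by the same symmetric relabeling the paper uses for Lemma~\mref{lem:3003}~\mref{it:3002}. The only (immaterial) difference is that in part~\mref{it:20001} the paper takes $\eta^{(u)}$ as the fourth derivation where you take the inner derivation $d_{a}$; both are covered by the same clause of Lemma~\mref{lem:3003}~\mref{it:3001}, so the argument is unchanged.
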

\begin{proof}

\mnoindent
\mref{it:20001}
 Assume that $|L|\geq 3$ and let $u\in L\backslash \{0, 1\}$.
Then $\chi^{(0)}$, $\eta^{(u)}$ are in $\DO(L)$ by Proposition \mref{pro:000}
and $\chi^{(0)}\not\cong \eta^{(u)}$ by Lemma \mref{lem:3003}. Moreover, it is easy to see that
$\chi^{(0)}\neq \textbf{0}_{L}$,  $\chi^{(0)}\neq \mrep_{L}$, $\eta^{(u)}\neq \textbf{0}_{L}$  and $\eta^{(u)}\neq \mrep_{L}$. Consequently,
we have by Lemma \mref{lem:00} that
 $\mrep_{L}, \textbf{0}_{L}, \chi^{(0)}$ and $ \eta^{(u)}$
 are mutually non-isomorphic derivations  on $L$.

\mnoindent
\mref{it:20002}
Assume that   $|L|\geq 4$ and let $u, v\in L\backslash \{0, 1\}$ with $u\neq v$. By Lemma \mref{lem:3003}, we have
$\chi^{(u)}\not\cong d_{v}$ or $\chi^{(v)}\not\cong d_{u}$. Without loss of generality, suppose $\chi^{(u)}\not\cong d_{v}$. It
follows by Proposition \mref{pro:000}, Lemma \mref{lem:00} and Lemma \mref{lem:3003} that
  $\mrep_{L}, \textbf{0}_{L}, \chi^{(0)}, \chi^{(u)}$ and $d_{v}$
are mutually non-isomorphic derivations on $L$.
\end{proof}

\subsection{Classification of differential lattices}
\mlabel{ss:class}

We next apply the general results above to classify all derivations on finite chains and diamond type lattices $M_{n}$.
\subsubsection{Classification on finite chains}
\begin{lemma}
Let $(L, \vee, \wedge,  0, 1)$ be a finite chain and $d, d'\in \DO(L)$. Then
 $d\cong d'$ if and only if $d= d'$.
 \mlabel{lem:11}
\end{lemma}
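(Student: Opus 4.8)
The plan is to exploit the fact that on a finite chain the only lattice automorphism is the identity. Indeed, a lattice isomorphism $f:L\to L$ must be an order isomorphism of the poset $(L,\leq)$, and an order isomorphism of a finite chain onto itself is forced to fix each element: the unique minimal element must go to the unique minimal element, then induction up the chain (or: $f$ is a bijective monotone self-map of a finite totally ordered set, hence the identity). So I would first record this observation, perhaps citing or reproving that $\mathrm{Aut}(L)=\{\mrep_L\}$ for a finite chain $L$.

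Given that, the proof is immediate in both directions. If $d=d'$ then trivially $d\cong d'$ via $f=\mrep_L$. Conversely, if $d\cong d'$, there is a lattice automorphism $f:L\to L$ with $fd=d'f$; since $f=\mrep_L$, this reads $d=d'$. I would write this out in two or three lines.

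The main (and only) obstacle is the automorphism claim, which is genuinely routine: one argues that any lattice automorphism of a chain preserves $\leq$ (since $x\leq y\iff x\wedge y=x$, which is preserved by $f$), and a strictly monotone bijection from a finite chain to itself must be the identity because it has to match up the $i$-th smallest element with the $i$-th smallest element. I expect the author simply invokes this. An alternative, slightly more hands-on route avoids naming automorphisms at all: observe that for a chain, the map sending a derivation $d$ to the chain $\Fix_d(L)$ (a sub-chain containing $0$, by Proposition~\mref{pro:201}\mref{it:2011}, \mref{it:2013}) is injective — combined with Lemma~\mref{lll:30}\mref{it:32} this pins down $d$ from its isomorphism class — but the automorphism argument is cleaner, so I would present that.

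In summary, the proposed proof is: (1) show every lattice automorphism of a finite chain is the identity; (2) deduce that $d\cong d'$ forces $d=d'$, and note the converse is trivial.
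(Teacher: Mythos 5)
Your proposal is correct and matches the paper's own proof: the paper also reduces the claim to the observation that any lattice automorphism $f$ of a finite chain is the identity (it cites \cite[Theorem 2.3]{BS} for the fact that $f$ and $f^{-1}$ are order-preserving bijections, which forces $f=\mrep_L$), and then concludes $d=d'$ from the intertwining relation exactly as you do.
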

\begin{proof}
Assume that $(L, \vee, \wedge,  0, 1)$ is a finite chain and $d, d'\in \DO(L)$.
Certainly $d= d'$ implies $d\cong d'$.

Conversely, suppose that   $d\cong d'$. Then  there exists a lattice automorphism $f: L\rightarrow L$ such that
$df=fd'$.  Since $f$ is a bijection and both $f$ and $f^{-1}$ are order-preserving
(see Theorem 2.3 in \mcite{BS}), we have $f=\mrep_{L}$ and so   $d=df=fd'=d'$.
\end{proof}

\begin{remark}
On the other hand, if $(L, \vee, \wedge,  0, 1)$ is an infinite chain, then for $d, d'\in \DO(L)$, 
 $d\cong d'$ does not necessarily imply  $d= d'$.

For example,  equip the unit interval $[0, 1]$ with the usual order $\leq$.
 Then $([0, 1], \leq)$ is a chain.
Consider  inner derivations $d_\frac{1}{2}$ and $d_\frac{1}{4}$,
we have $d_\frac{1}{2}\neq d_\frac{1}{4}$ since $d_\frac{1}{2}(1)=\frac{1}{2}\neq \frac{1}{4}=d_\frac{1}{4}(1)$. However, $d_\frac{1}{2}\cong d_\frac{1}{4}$.
 In fact, let $f: [0, 1]\rightarrow [0, 1]$ be defined by $f(x)=x^{2}$ for any $x\in [0, 1]$. Then
 it is easy to see that $f$ is a bijection and both $f$ and $f^{-1}$ are order-preserving, so $f$ is a
 lattice isomorphism by \cite[Theorem 2.3]{BS}. Also, we have
 $f(d_\frac{1}{2}(x))=f(x\wedge \frac{1}{2})=(x\wedge \frac{1}{2})^{2}=x^{2}\wedge \frac{1}{4}=d_\frac{1}{4}(f(x))$
 for any $x\in [0, 1]$. Thus $fd_\frac{1}{2}=d_\frac{1}{4}f$ and hence
  $d_\frac{1}{2}\cong d_\frac{1}{4}$.
\end{remark}

\begin{proposition}
Let $(L, \vee, \wedge,  0, 1)$ be a lattice. 
\begin{enumerate}
\item  $\DO(L)=\{\mrep_{L}, \textbf{0}_{L}\}$ if and only if $L$ is a $2$-element chain.
\mlabel{it:01}
\item  $|\DO(L)|=4$ if and only if $L$ is a $3$-element chain.
\mlabel{it:02}
\end{enumerate}
\mlabel{p:2000}
\end{proposition}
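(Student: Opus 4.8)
The plan is to prove both directions of each biconditional, with the easy direction (the "if") being a direct computation and the main content lying in the "only if" direction. For the "if" direction of (i), note that on a $2$-element chain $\{0,1\}$, any derivation $d$ satisfies $d(0)=0$ by Proposition~\ref{pro:201}\ref{it:2011}, and $d(1)\in\{0,1\}$; since $\chi^{(0)}=\textbf{0}_L$ and $\chi^{(1)}=\mrep_L$ on this lattice, there are exactly the two stated derivations. For the "if" direction of (ii), on the $3$-element chain $\{0,u,1\}$ one lists the candidates: $d(0)=0$ always, $d(u)\in\{0,u\}$ by Proposition~\ref{pro:201}\ref{it:2011}, and $d(1)\in\{0,u,1\}$; one then checks via Eq.~\meqref{eq:der} which of the resulting combinations actually define derivations and counts exactly four. (Alternatively, invoke the forthcoming chain classification Theorem~\ref{the:001}, which gives $2^{n-1}$ derivations on an $n$-element chain, so $n=3$ yields $4$; but to keep this self-contained at this point in the paper I would argue directly.)

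For the "only if" direction of (i): suppose $\DO(L)=\{\mrep_L,\textbf{0}_L\}$. By Proposition~\ref{pro:000}\ref{it:0001}, for every $u\in L$ the operator $\eta^{(u)}$ is a derivation, hence $\eta^{(u)}\in\{\mrep_L,\textbf{0}_L\}$; since $\eta^{(u)}(1)=u$, this forces $u\in\{0,1\}$, so $|L|\le 2$. As $L$ is bounded with $0\ne 1$ (we may assume $|L|\ge 2$, else $\DO(L)$ is a singleton), $L$ is the $2$-element chain. For the "only if" direction of (ii): suppose $|\DO(L)|=4$. First, $|L|\ge 3$: if $|L|\le 2$ then $|\DO(L)|\le 2$ by part~(i). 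Now by Corollary~\ref{cor:2000}\ref{it:20002}, if $|L|\ge 4$ then $|\DO(L)|\ge 5$, contradicting $|\DO(L)|=4$; hence $|L|=3$. A $3$-element bounded lattice is necessarily the $3$-element chain $\{0,u,1\}$ (there is no room for two incomparable elements between $0$ and $1$). This proves the claim.

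The main obstacle is ensuring the lower-bound inputs are legitimately available: Corollary~\ref{cor:2000}\ref{it:20002} requires $|L|\ge 4$ and produces five mutually non-isomorphic (in particular distinct) derivations $\mrep_L,\textbf{0}_L,\chi^{(0)},\chi^{(u)},d_v$, so $|\DO(L)|\ge 5$ in that case — this is exactly what rules out $|L|\ge 4$ when $|\DO(L)|=4$. The only subtlety is the boundary case $|L|=3$ versus the requirement $u\ne v$ in Corollary~\ref{cor:2000}\ref{it:20002}: a $3$-element lattice has only one element $u\in L\setminus\{0,1\}$, so that corollary does not apply, which is consistent with $|\DO(L)|=4$ there rather than $\ge 5$. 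Finally, I would double-check the count of exactly four derivations on the $3$-chain by the explicit case analysis on $(d(u),d(1))$, confirming that $(0,0),(0,u),(u,u),(u,1)$ all satisfy Eq.~\meqref{eq:der} while $(0,1)$ and $(u,0)$ do not — this elementary verification is the one place a routine but necessary computation is needed.
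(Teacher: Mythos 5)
Your overall structure coincides with the paper's: the forward directions are handled by a case analysis on the values $d(u)$, $d(1)$ using Proposition~\mref{pro:201}, and the converse of (ii) follows from Corollary~\mref{cor:2000}~\mref{it:20002} together with part (i), exactly as in the paper. Your converse of (i), which applies $\eta^{(u)}\in\DO(L)$ and $\eta^{(u)}(1)=u$ to force $u\in\{0,1\}$ for every $u\in L$, is a clean direct alternative to the paper's appeal to Corollary~\mref{cor:2000}~\mref{it:20001}; both are valid.

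However, the explicit verification that you defer to the end --- and which you yourself flag as the one necessary computation --- is carried out incorrectly. Writing the candidates as pairs $(d(u),d(1))$: the pair $(0,u)$ does \emph{not} define a derivation, since $u\leq d(1)$ forces $d(u)=u$ by Proposition~\mref{pro:201}~\mref{it:2013} (directly, $d(u\wedge 1)=0$ while $(d(u)\wedge 1)\vee(u\wedge d(1))=0\vee u=u$, violating Eq.~\meqref{eq:der}). Conversely, the pair $(u,0)$ \emph{is} a derivation; it is precisely $\chi^{(0)}$, which lies in $\DO(L)$ by Proposition~\mref{pro:000}. The correct list is therefore $(0,0),(u,0),(u,u),(u,1)$, i.e.\ $\textbf{0}_{L},\chi^{(0)},d_{u},\mrep_{L}$, matching the paper's $\DO(L)=\{\mrep_{L},\textbf{0}_{L},\chi^{(0)},d_{u}\}$. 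Your count of four survives only because your two errors (wrongly admitting $(0,u)$ and wrongly excluding $(u,0)$) cancel; as written the verification is false and must be redone before the "if'' direction of (ii) can be considered proved.
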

\begin{proof}
\mref{it:01}
 Assume that $L=\{0, 1\}$ and $d\in \DO(L)$. Then $d(0)=0$ by Proposition \mref{pro:201}
\mref{it:2011}, which implies that $d=\mrep_{L}$ if
 $d(1)=1$, and $d=\textbf{0}_{L}$ if $d(1)=0$. Therefore $\DO(L)=\{\mrep_{L}, \textbf{0}_{L}\}$.

Conversely, assume that  $|L|\geq 3$.
Then  $\DO(L)\neq\{\mrep_{L}, \textbf{0}_{L}\}$ by Corollary \mref{cor:2000} \mref{it:20001}.

\mnoindent
\mref{it:02}
Assume that $L=\{0, u, 1\}$ is a $3$-element chain with $0< u<1$, and $d\in \DO(L)$. Then $d(0)=0$ by Proposition \mref{pro:201}
\mref{it:2011}.

If $d(1)=1$, then $d=\mrep_{L}$ by Proposition \mref{pro:201} \mref{it:2014}.
If $d(1)=u$, then $d(u)=u$ by Proposition \mref{pro:201}
\mref{it:2013} and so $d=d_{u}$.
If $d(1)=0$ and $d(u)=u$, then $d=\chi^{(0)}$.
If $d(1)=0$ and $d(u)=0$, then $d=\textbf{0}_{L}$.
Therefore $\DO(L)=\{\mrep_{L}, \textbf{0}_{L}, \chi^{(0)}, d_{u}\}$ and $|\DO(L)|=4$.

Conversely, assume that $|\DO(L)|=4$. If $|L|\geq 4$, then $|\DO(L)|\geq 5$ by Corollary \mref{cor:2000} \mref{it:20002}, a contradiction.
Thus $|L|\leq 3$. But $|L|= 2$ implies that $|\DO(L)|=2$ by \mref{it:01}. Therefore $|L|=3$ and, consequently, $L$ is a $3$-element chain.
\end{proof}

\begin{lemma}
Let $(L, \vee, \wedge,  0, 1)$ be a chain and $u, v\in L$ with $v\leq u$. Define an operator  $\lambda^{(v; u)}$ on $L$ by
$$
    \lambda^{(v; u)}(x)=
    \begin{cases}
      x,  & \textrm{if}~ x\leq u; \\
      v,  & \textrm{otherwise}.
    \end{cases}
    $$
Then  $\lambda^{(v; u)}$ is in $\DO(L)$.
\mlabel{lem:0000}
\end{lemma}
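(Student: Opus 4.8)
The plan is to verify the defining identity \eqref{eq:der} for $d:=\lambda^{(v;u)}$ directly, exploiting that in a chain any two elements are comparable. The one preliminary fact I would record first is that $d(x)\le x$ for every $x\in L$: indeed either $d(x)=x$, or $x\nleq u$ and then $d(x)=v\le u<x$ (here I use $v\le u$). Note this has to be proved by hand at this point, since we are not yet entitled to invoke Proposition~\ref{pro:201}\,\ref{it:2011}; this is the only step requiring any attention, and it is immediate.

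Next, fix $x,y\in L$. Since $L$ is a chain and the right-hand side of \eqref{eq:der} is symmetric in $x$ and $y$, I may assume $x\le y$, so that $x\wedge y=x$ and the identity to be checked becomes
$$d(x)=\big(d(x)\wedge y\big)\vee\big(x\wedge d(y)\big).$$
Because $d(x)\le x\le y$, the first meet collapses to $d(x)$, and the identity reduces to the single inequality $x\wedge d(y)\le d(x)$.

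Finally I would split into two cases according to whether $x\le u$. If $x\le u$, then $d(x)=x$, so $x\wedge d(y)\le x=d(x)$ and we are done. If $x\nleq u$, then $u<x\le y$ forces $y\nleq u$ as well, hence $d(x)=d(y)=v$; since $v\le u<x$ we get $x\wedge d(y)=x\wedge v=v=d(x)$, again as required. These cases are exhaustive, so $d\in\DO(L)$. I do not expect any genuine obstacle: the whole argument is the two-line reduction (comparability together with $d(x)\le x$), after which the case check is trivial; in particular this also recovers Proposition~\ref{pro:000}\,\ref{it:0002} for chains, as $\lambda^{(u)}=\lambda^{(0;u)}$.
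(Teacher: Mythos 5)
Your proof is correct and follows essentially the same route as the paper: first establish $\lambda^{(v;u)}(w)\leq w$ using the chain hypothesis and $v\leq u$, then verify Eq.~\eqref{eq:der} by cases according to whether the arguments lie below $u$. Your reduction (WLOG $x\leq y$, collapsing the identity to the single inequality $x\wedge d(y)\leq d(x)$) is a mild streamlining of the paper's three-case computation, but the substance is identical.
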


As special cases,  $\lambda^{(v; 1)}=\mrep_{L}$ and $\lambda^{(0; u)}=\lambda^{(u)}$, where $\lambda^{(u)}$ is defined in Proposition \ref{pro:000}.

\begin{proof}
Assume that $(L, \vee, \wedge,  0, 1)$ is a chain and $u, v\in L$ with $v\leq u$. Since $L$ is a chain, we have
 $ \lambda^{(v; u)}(w)\leq w$ for any $w\in L$. To prove that  $\lambda^{(v; u)}$ is in $\DO(L)$,
let $x, y\in L$.

If $x\leq u$, then $\lambda^{(v; u)}(x)=x$ and $x\wedge y\leq u$. Then $\lambda^{(v; u)}(y)\leq y$ gives
$$\lambda^{(v; u)}(x\wedge y)=x\wedge y=(x\wedge y)\vee (x\wedge \lambda^{(v; u)}(y))=
(\lambda^{(v; u)}(x)\wedge y)\vee (x\wedge \lambda^{(v; u)}(y)).$$

The case for $y\leq u$ is similarly verified.

If $x> u$ and $y> u$,  then $\lambda^{(v; u)}(x)=\lambda^{(v; u)}(y)=v$ and
$x\wedge y> u$, which implies that
$$\lambda^{(v; u)}(x\wedge y)=v=(v\wedge y)\vee (x\wedge v)=(\lambda^{(v; u)}(x)\wedge y)\vee (x\wedge \lambda^{(v; u)}(y)).$$
Therefore $\lambda^{(v; u)}$ is in $\DO(L)$.
\end{proof}

\begin{remark}
If $L$ is not a chain, then $\lambda^{(v; u)}$ does not necessarily belong to $\DO(L)$. For example, let $N_{5}=\{0, v, u, w, 1\}$ be the pentagon lattice with the Hasses diagram:
\vspace{-.3cm}
\begin{center}
\setlength{\unitlength}{0.7cm}
\begin{picture}(4,5)
\thicklines
\put(2.0,1.0){\line(-1,1){1.0}}
\put(2.0,1.0){\line(1,1){1.4}}
\put(1.0,2.0){\line(0,1){1.0}}
\put(2.0,4.0){\line(-1,-1){1.0}}
\put(2.1,4.0){\line(1,-1){1.4}}
\put(2.0,0.5){$0$}
\put(0.6,1.9){$v$}
\put(3.6,2.3){$w$}
\put(0.6,2.9){$u$}
\put(2.1,4.1){$1$}
\put(1.92,0.95){$\bullet$}
\put(0.92,1.9){$\bullet$}
\put(0.92,2.9){$\bullet$}
\put(3.3,2.35){$\bullet$}
\put(1.92,3.92){$\bullet$}
\end{picture}
\end{center}
\vspace{-.3cm}
 Then $\lambda^{(v; u)}$ is not in $\DO(N_{5})$ since
$\lambda^{(v; u)}(w)=v\nleq w$.
\end{remark}

\begin{lemma}
Let  $d$ be an operator on a  chain $L$. Suppose that $ \max_{x\in L}\{d(x)\}$ exists and denote it by $u$. Then
  $d\in \DO(L)$ if and only if $d$ satisfies the following conditions:
\begin{enumerate}
\item  $d(x)=x$ for each $x\leq u$, and
\mlabel{it:00011}
\item $d(v)\leq d(w)$ for any $v, w\in L$ with $u< w\leq v$.
\mlabel{it:00012}
\end{enumerate}
\mlabel{lem:0001}
\end{lemma}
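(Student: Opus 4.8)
The plan is to prove both implications directly from the defining identity~\meqref{eq:der}, using that $L$ is a chain so that for any two elements one is $\leq$ the other, and that $u=\max_{x\in L}\{d(x)\}$. For the forward direction, suppose $d\in\DO(L)$. Condition~\mref{it:00011} is immediate: if $x\leq u$ then $x\leq u=d(w)$ for some $w\in L$, so $d(x)=x$ by Proposition~\mref{pro:201}\mref{it:2013}. For condition~\mref{it:00012}, take $v,w\in L$ with $u<w\leq v$. I would apply~\meqref{eq:der} to the pair $(v,w)$: since $L$ is a chain and $w\leq v$, we have $v\wedge w=w$, so
\[
d(w)=d(v\wedge w)=(d(v)\wedge w)\vee(v\wedge d(w)).
\]
Now $d(v)\leq u<w$ forces $d(v)\wedge w=d(v)$, and $d(w)\leq v$ (Proposition~\mref{pro:201}\mref{it:2011}) forces $v\wedge d(w)=d(w)$, so the right side is $d(v)\vee d(w)$. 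Hence $d(w)=d(v)\vee d(w)$, i.e. $d(v)\leq d(w)$, which is~\mref{it:00012}.

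For the converse, assume $d$ satisfies~\mref{it:00011} and~\mref{it:00012}; I must verify~\meqref{eq:der} for all $x,y\in L$. By the chain hypothesis and the symmetry of~\meqref{eq:der} in $x,y$, it suffices to treat $x\leq y$, so $x\wedge y=x$ and the identity to be checked becomes $d(x)=(d(x)\wedge y)\vee(x\wedge d(y))$. Here $d(x)\leq x\leq y$ (using~\mref{it:00011} when $x\leq u$, and $d(x)\leq u<x$ otherwise) gives $d(x)\wedge y=d(x)$, so the claim reduces to $x\wedge d(y)\leq d(x)$. I would split into cases according to how $x,y$ sit relative to $u$: if $y\leq u$ then $x\leq u$ too, $d(x)=x$ by~\mref{it:00011}, and $x\wedge d(y)\leq x=d(x)$; if $x\leq u<y$ then $d(x)=x$ and again $x\wedge d(y)\leq x=d(x)$; if $u<x\leq y$ then~\mref{it:00012} gives $d(y)\leq d(x)$, so $x\wedge d(y)\leq d(y)\leq d(x)$. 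In every case $x\wedge d(y)\leq d(x)$, so~\meqref{eq:der} holds and $d\in\DO(L)$.

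The argument is essentially a case analysis on the position of $x$ and $y$ relative to the threshold $u$, and the only mildly delicate point is the reduction using the symmetry of~\meqref{eq:der} to the case $x\leq y$: one should note explicitly that swapping $x$ and $y$ leaves both sides of~\meqref{eq:der} unchanged, since $x\wedge y=y\wedge x$ and $(d(x)\wedge y)\vee(x\wedge d(y))=(d(y)\wedge x)\vee(y\wedge d(x))$. I do not expect a real obstacle here; the whole proof is a short verification once the three cases are laid out, and the existence of $u$ is exactly what makes the cases exhaustive and the bounds $d(v)\leq u$ available.
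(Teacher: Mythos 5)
Your proposal is correct and takes essentially the same route as the paper's proof: the forward direction extracts conditions \mref{it:00011} and \mref{it:00012} from the Leibniz identity (the paper invokes Proposition~\mref{pro:201}~\mref{it:2012} where you expand Eq.~\meqref{eq:der} directly, which amounts to the same computation), and the converse is the same case analysis on the positions of $x$ and $y$ relative to $u$, which you merely streamline by first reducing to $x\leq y$ and to the single inequality $x\wedge d(y)\leq d(x)$.
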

\begin{proof}
Let $L, d$ and $u$ be as given.

If $d$ is in $\DO(L)$, then $d(x)=x$ for each $x\leq u$ by Proposition \mref{pro:201}. Thus \mref{it:00011} holds.
To prove that \mref{it:00012} holds, let $v, w\in L$ with $u< w\leq v$. Then $d(v)\leq \max_{x\in L}\{d(x)\}=u< w$ and so
$d(v)=d(v)\wedge w\leq d(v\wedge w)=d(w)$ by Proposition \mref{pro:201} \mref{it:2012}. This proves \mref{it:00012}.

Conversely, suppose that $d$ satisfies conditions \mref{it:00011} and \mref{it:00012}.
Then it is easy to see that $d(x)\leq x$
for any $x\in L$. Let $v, w\in L$ and distinguish the following cases.

If $v\leq u$, then $v\wedge w\leq u$ and so $d(v)=v$ and $d(v\wedge w)=v\wedge w$ by condition \mref{it:00011}. It follows that
 $$d(v\wedge w)=v\wedge w=(v\wedge w)\vee (v\wedge d(w))=(d(v)\wedge w)\vee (v\wedge d(w))$$
since $d(w)\leq w$.
The case for $w\leq u$ is similarly verified.

If $u<  w\leq v$, then $d(v)\leq d(w)$ by condition \mref{it:00012}. Also, since $d(w)\leq \max_{x\in G}\{d(x)\}=u< v$ and
$d(v)\leq \max_{x\in G}\{d(x)\}=u< w$,
we have
$$d(v\wedge w)=d(w)=d(v)\vee d(w)=(d(v)\wedge w)\vee (v\wedge d(w)).$$

If $u<  v\leq w$, then we similarly derive $d(v\wedge w)=(d(v)\wedge w)\vee (v\wedge d(w))$.

Thus we conclude that $d$ is in $\DO(L)$.
\end{proof}

\begin{proposition}
Let  $d$ be a derivation on a  chain $(L, \vee, \wedge,  0, 1)$. Then $d$ is in $\IDO(L)$ if and only if
 $ \max_{x\in L}\{d(x)\}$ exists and is $d(1)$.
   \mlabel{pr:111}
\end{proposition}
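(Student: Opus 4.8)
The plan is to prove both directions using the characterization of derivations on chains from Lemma~\mref{lem:0001}, together with Proposition~\mref{the:000}. For the ``only if'' direction, assume $d\in\IDO(L)$. By Proposition~\mref{the:000} (applied with the operator being the isotone derivation $d$), we have $d(x)=x\wedge d(1)$ for all $x\in L$. Since $L$ is a chain, $x\wedge d(1)$ is either $x$ (when $x\leq d(1)$) or $d(1)$ (when $x>d(1)$); in every case $d(x)\leq d(1)$. Hence $d(1)$ is an upper bound of $d(L)$, and since $d(1)\in d(L)$, it follows that $\max_{x\in L}\{d(x)\}$ exists and equals $d(1)$.

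For the ``if'' direction, suppose $\max_{x\in L}\{d(x)\}$ exists and equals $d(1)$; write $u:=d(1)$. I first need to show $d$ is isotone, i.e. $v\leq w$ implies $d(v)\leq d(w)$. Since $d\in\DO(L)$, Lemma~\mref{lem:0001} applies (its hypothesis $\max_{x\in L}\{d(x)\}=u$ holds here). Take $v\leq w$ in $L$ and split into cases according to how $v,w$ sit relative to $u$. If $w\leq u$, then also $v\leq u$, so by Lemma~\mref{lem:0001}\mref{it:00011} we get $d(v)=v\leq w=d(w)$. If $v\leq u< w$, then $d(v)=v\leq u=d(w\wedge u)$... more directly, $d(v)=v\leq u$ and $d(w)\leq u$ but we need $d(v)\leq d(w)$; here $d(w)$ could be anything $\leq u$, so instead I argue: $d(v)=v\leq u=\max_{x}\{d(x)\}=d(1)$, and we want $v\leq d(w)$. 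Actually the cleanest route is: since $d\in\DO(L)$ with $d(1)=u$ the maximum, Proposition~\mref{pro:201}\mref{it:2012} gives $w\wedge d(1)\leq d(w\wedge 1)=d(w)$, and as $v\leq u=d(1)$ and $v\leq w$ we get $v=v\wedge w\leq d(1)\wedge w\leq d(w)$, i.e. $d(v)=v\leq d(w)$. Finally if $u<v\leq w$, then Lemma~\mref{lem:0001}\mref{it:00012} directly gives $d(w)\leq d(v)$ — wait, that is the wrong direction; condition \mref{it:00012} reads $d(v)\leq d(w)$ for $u<w\leq v$, so with $u<v\leq w$ it gives $d(w)\leq d(v)$, the opposite of isotonicity. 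So in this regime isotonicity would fail in general; this is the main obstacle and shows I must use the hypothesis more carefully.

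Resolving the obstacle: the hypothesis is not merely that the max exists but that it equals $d(1)$. If $u<v\leq w$, then $d(w)\leq u<w$, so by Proposition~\mref{pro:201}\mref{it:2012}, $w\wedge d(1)\leq d(w\wedge 1)=d(w)$; but $w>u=d(1)$ forces $w\wedge d(1)=d(1)=u$, hence $u\leq d(w)$, and combined with $d(w)\leq u$ we get $d(w)=u$. The same argument with $v$ in place of $w$ (using $v>u$) gives $d(v)=u$. Therefore $d(v)=d(w)=u$ and isotonicity holds on this regime as well. Assembling the three cases, $d$ is isotone, so $d\in\IDO(L)$, completing the proof. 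The key insight — the part I'd flag as the crux — is exactly this last step: on the ``upper part'' of the chain $d$ must be \emph{constant} equal to $d(1)$, which is forced precisely by the assumption $\max_{x\in L}\{d(x)\}=d(1)$ rather than some smaller value; without that equality one only gets the weaker monotonicity of Lemma~\mref{lem:0001}\mref{it:00012}, which runs the wrong way.
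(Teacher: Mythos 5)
Your proof is correct and follows essentially the same route as the paper's: both directions come down to showing that $d$ fixes everything below $d(1)$ and is constant equal to $d(1)$ above it (so $d(x)=x\wedge d(1)$), with the upper-part constancy forced by combining $d(w)\leq \max_{x}\{d(x)\}=d(1)$ with $w\wedge d(1)\leq d(w)$ from Proposition~\mref{pro:201}. The paper packages this through Lemma~\mref{lem:0001} and then invokes Proposition~\mref{the:000}, whereas you verify isotonicity case by case, but the mathematical content is identical.
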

\begin{proof}
Assume that $(L, \vee, \wedge,  0, 1)$ is a  chain and $d\in \DO(L)$.

If $d\in \IDO(L)$, then $d(x)\leq d(1)$ for any $x\in L$ and so $d(1)= \max_{x\in L}\{d(x)\}$.

Conversely, if
 $ \max_{x\in L}\{d(x)\}$ exists and  $d(1)= \max_{x\in L}\{d(x)\}$, then by Lemma \mref{lem:0001}, we have
 $d(x)=x$ for any $x\leq d(1)$ and $d(1)\leq d(w)\leq d(1)$ for any $d(1)< w\leq 1$.
 It follows that  $d(x)=x\wedge d(1)$ for any $x\in L$ and hence $d$ is in $\IDO(L)$  by Proposition \mref{the:000}.
\end{proof}

We recall the following cominatorial lemma before the first classification theorem.
Its proof can be found for example in  \textit{https://math.stackexchange.com/questions/2231965/count-number-of-increasing-functions-nondecreasing-functions-f-1-2-3-ld.}

\begin{lemma}
	For any positive integers $k$ and $\ell$, the number of isotones from $[k]$ to $[\ell]$ is $\bincc{k+\ell-1}{k}$. The same is true for the number of antitones.
\mlabel{lem:antinum}
\end{lemma}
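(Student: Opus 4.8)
The statement to prove is the combinatorial Lemma~\mref{lem:antinum}: the number of nondecreasing (isotone) functions from $[k]=\{1,\dots,k\}$ to $[\ell]=\{1,\dots,\ell\}$ equals $\bincc{k+\ell-1}{k}$, and likewise for nonincreasing (antitone) functions.

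\textbf{Plan of proof.} The plan is to set up a bijection between isotone maps $f:[k]\to[\ell]$ and multisets of size $k$ drawn from an $\ell$-element set (equivalently, weakly increasing sequences of length $k$ with entries in $[\ell]$), and then invoke the classical ``stars and bars'' count. First I would observe that an isotone map $f:[k]\to[\ell]$ is completely determined by the weakly increasing sequence of its values $f(1)\leq f(2)\leq\cdots\leq f(k)$, and conversely every such sequence defines an isotone map; so isotone maps are in bijection with weakly increasing $k$-tuples $(a_1,\dots,a_k)$ with $1\leq a_1\leq\cdots\leq a_k\leq \ell$. Next I would apply the standard substitution $b_i:=a_i+(i-1)$, which converts a weakly increasing tuple into a strictly increasing tuple $1\leq b_1<b_2<\cdots<b_k\leq \ell+k-1$; this map is a bijection onto the set of $k$-element subsets of $[k+\ell-1]$, whose cardinality is $\bincc{k+\ell-1}{k}$. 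This establishes the count for isotones.

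For the antitone case, the cleanest route is to compose with the order-reversing bijection of the domain: if $f:[k]\to[\ell]$ is antitone, then $g(i):=f(k+1-i)$ is isotone, and $f\mapsto g$ is a bijection between antitone and isotone maps $[k]\to[\ell]$. (Alternatively one could reverse the codomain via $i\mapsto \ell+1-i$.) Hence the number of antitones equals the number of isotones, namely $\bincc{k+\ell-1}{k}$.

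\textbf{Anticipated obstacle.} There is essentially no hard step here — the result is a textbook identity, and the excerpt itself points to an online reference for the proof. The only thing requiring a small amount of care is verifying that the shift $a_i\mapsto a_i+(i-1)$ really is a bijection onto $k$-subsets of $[k+\ell-1]$ (injectivity is immediate from injectivity of the shift on each coordinate together with the strict-inequality bookkeeping; surjectivity comes from reversing the substitution as $a_i=b_i-(i-1)$ and checking the resulting tuple is weakly increasing and lands in $[\ell]$). Given that this is a recalled lemma used only as a black box in the classification theorems that follow, I would keep the write-up to the short bijective argument above rather than belaboring the elementary verifications.
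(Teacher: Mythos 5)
Your proof is correct and is exactly the standard stars-and-bars bijection (weakly increasing value sequences, shifted to strictly increasing ones, counted as $k$-subsets of $[k+\ell-1]$, with the antitone case handled by reversing the domain); the paper itself gives no proof, only a pointer to an external reference where this same argument appears. Nothing further is needed.
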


\begin{theorem}
Let $(L, \vee, \wedge,  0, 1)$ be an $n$-element chain. Then $|\DO(L)|=2^{n-1}$ and
 there are exactly $2^{n-1}$ isomorphic classes of derivations  on $L$.
 \mlabel{the:001}
\end{theorem}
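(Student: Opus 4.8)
The plan is to reduce the counting of derivations on an $n$-element chain $L=\{0=a_0<a_1<\cdots<a_{n-1}=1\}$ to a purely combinatorial count, using Lemma~\mref{lem:0001} as the structural backbone. By Corollary~\mref{c:200} and Proposition~\mref{pro:201}\mref{it:2015}, every $d\in\DO(L)$ is idempotent with $\Fix_d(L)=d(L)$; and by Proposition~\mref{pro:201}\mref{it:2011}, $d(x)\le x$ always. Since $L$ is finite, $u:=\max_{x\in L}\{d(x)\}$ exists, and Lemma~\mref{lem:0001} tells us that $d$ is determined exactly by (a) the choice of $u=a_i$ (for which $d$ fixes everything $\le a_i$) and (b) an \emph{antitone} map on the ``tail'' $\{a_{i+1}<\cdots<a_{n-1}\}$ into $\{a_0<\cdots<a_i\}$ subject to $d(a_j)\le a_j$ (automatic here since $d(a_j)\le a_i<a_j$) and with the extra constraint that the maximum value actually attained on the tail is $a_i$ itself.

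First I would set up the bijection precisely. Given $d\in\DO(L)$, let $i$ be the index with $u=a_i$; note $i$ ranges over $0,1,\dots,n-1$. For $i=n-1$ we get $d=\mrep_L$. For $i<n-1$, Lemma~\mref{lem:0001}\mref{it:00011} forces $d|_{\{a_0,\dots,a_i\}}=\mrep$, and Lemma~\mref{lem:0001}\mref{it:00012} says $d$ restricted to the $(n-1-i)$-element tail $\{a_{i+1},\dots,a_{n-1}\}$ is a nonincreasing (antitone) function into $\{a_0,\dots,a_i\}$ (an $(i+1)$-element set), with the requirement that its maximum value be exactly $a_i$ — equivalently $d(a_{i+1})=a_i$, since $d$ is antitone on the tail so its largest value is attained at the bottom $a_{i+1}$. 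So after pinning $d(a_{i+1})=a_i$, what remains is a free antitone map from the $(n-2-i)$-element set $\{a_{i+2},\dots,a_{n-1}\}$ to the $(i+1)$-element set $\{a_0,\dots,a_i\}$. Conversely any such data assembles (via Lemma~\mref{lem:0001}, the ``if'' direction) to a genuine derivation. This gives
\[
|\DO(L)| \;=\; 1 \;+\; \sum_{i=0}^{n-2} \#\{\text{antitones } [\,n-2-i\,]\to[\,i+1\,]\},
\]
where the lone $1$ is the contribution of $\mrep_L$ (case $i=n-1$), and for $i=n-2$ the tail beyond $a_{i+1}$ is empty so the summand is $1$ (the derivation $\chi^{(a_{n-2})}$-type map, i.e.\ $d(1)=a_{n-2}$, everything else fixed).

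Next I would evaluate the sum. By Lemma~\mref{lem:antinum}, with $k=n-2-i$ and $\ell=i+1$, the number of antitones is $\bincc{k+\ell-1}{k}=\bincc{n-2}{\,n-2-i\,}$; reindexing $j=n-2-i$ as $i$ runs from $0$ to $n-2$, the sum becomes $\sum_{j=0}^{n-2}\binc{n-2}{j}=2^{n-2}$. Hence $|\DO(L)|=1+2^{n-2}$ — \emph{but} this undercounts, because when $k=0$ (the case $i=n-2$) the ``empty antitone'' is counted as $1$ by the binomial formula $\bincc{\ell-1}{0}=1$, which is consistent; the genuine subtlety is rather that I have used the constraint $d(a_{i+1})=a_i$ to strip off one tail element, so I must double-check the edge case $i=n-2$ (tail has exactly one element $a_{n-1}=1$, already pinned to $a_{n-2}$, nothing free — summand $1$, matching $\binc{n-2}{0}=1$) and the case $i=n-1$ handled separately. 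Re-examining: the correct accounting is $|\DO(L)|=\sum_{i=0}^{n-1} N_i$ where $N_{n-1}=1$ and $N_i=\#\{\text{antitones }[n-2-i]\to[i+1]\}=\binc{n-2}{n-2-i}$ for $0\le i\le n-2$; thus $|\DO(L)|=1+\sum_{i=0}^{n-2}\binc{n-2}{n-2-i}=1+2^{n-2}$. Since this must equal $2^{n-1}$, I expect the resolution is that the maximum-value constraint is \emph{not} ``$d(a_{i+1})=a_i$'' but only ``$d(a_{i+1})\le a_i$ with equality not required'' once we allow $u$ to be characterized differently — i.e.\ I would recount without pinning: for each $i$, $N_i$ should be the number of antitones $[n-1-i]\to[i+1]$ \emph{whose maximum is exactly $a_i$}, and summing the inclusion–exclusion $\sum_i(\#\text{antitones into }[i+1] - \#\text{antitones into }[i])$ telescopes; the total number of pairs $(i,\text{antitone }[n-1-i]\to[i+1])$ without the maximality constraint, summed appropriately, is what yields the clean $2^{n-1}$. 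The honest statement of the plan, then: classify $d$ by $|\Fix_d(L)|$ or equivalently by the ``breakpoint'' $u$, apply Lemma~\mref{lem:0001}, convert to an antitone count via Lemma~\mref{lem:antinum}, and sum a binomial identity.

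Finally, for the isomorphic-classes count: by Lemma~\mref{lem:11}, on a finite chain $d\cong d'$ if and only if $d=d'$ (the only lattice automorphism of a finite chain is the identity). Therefore each derivation is its own isomorphic class, so the number of isomorphic classes equals $|\DO(L)|=2^{n-1}$. The main obstacle I anticipate is getting the combinatorial bookkeeping in the middle step exactly right — in particular correctly identifying, for each choice of the maximum value $u=a_i$, precisely which antitone ``tail'' functions are admissible (the maximality constraint on $u$ is the delicate point, and whether one counts antitones on $n-1-i$ or $n-2-i$ points depends on how that constraint is phrased), and then recognizing the resulting alternating/telescoping binomial sum as $2^{n-1}$. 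Everything else (the structural reduction and the isomorphism statement) is immediate from the cited results.
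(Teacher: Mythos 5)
Your structural reduction is the same as the paper's --- classify $d$ by the position $a_i$ of $u=\max_{x\in L}d(x)$, apply Lemma~\mref{lem:0001} to see that $d$ is the identity on $\{a_0,\dots,a_i\}$ and antitone on the tail, and count the antitones via Lemma~\mref{lem:antinum} --- but the combinatorial bookkeeping, which you yourself flag as the delicate point, is wrong and is left unresolved. The error is the claim that the tail function must attain the value $a_i$, i.e.\ that $d(a_{i+1})=a_i$. No such constraint exists: the maximum $u=a_i$ of $d$ is already attained at the point $a_i$ itself, because Lemma~\mref{lem:0001}\mref{it:00011} gives $d(a_i)=a_i$; the tail values need only lie in $\{a_0,\dots,a_i\}$ and be nonincreasing, and may all equal $0$. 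A concrete counterexample to your pinning is $\chi^{(0)}$ on the $3$-chain $\{0<a_1<1\}$: there $d(a_1)=a_1$ and $d(1)=0$, so $u=a_1$ (i.e.\ $i=1$) while $d(a_{i+1})=d(1)=0\neq a_1$.

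Once the spurious constraint is dropped, no inclusion--exclusion or telescoping is needed. The map $d\mapsto\bigl(i,\,d|_{\{a_{i+1},\dots,a_{n-1}\}}\bigr)$ is a bijection from $\DO(L)$ onto the set of pairs $(i,f)$ with $0\le i\le n-1$ and $f$ an \emph{arbitrary} antitone from the $(n-1-i)$-element chain $\{a_{i+1},\dots,a_{n-1}\}$ into the $(i+1)$-element chain $\{a_0,\dots,a_i\}$: the index $i$ is recovered from $d$ as the position of $\max d$, which equals $a_i$ for every such pair precisely because $d(a_i)=a_i$, so distinct pairs give distinct derivations, and the ``if'' direction of Lemma~\mref{lem:0001} shows every pair assembles to a derivation. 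Lemma~\mref{lem:antinum} with $k=n-1-i$ and $\ell=i+1$ gives $\binom{n-1}{i}$ antitones, whence $|\DO(L)|=\sum_{i=0}^{n-1}\binom{n-1}{i}=2^{n-1}$, which is exactly the paper's computation. Your count $1+2^{n-2}$ is genuinely wrong (for $n=3$ it gives $3$ rather than the correct $4$ of Proposition~\mref{p:2000}), and your concluding paragraph only conjectures how to repair it without doing so, so the proof of the first assertion is incomplete as written. The second assertion, reducing isomorphism classes to equality via Lemma~\mref{lem:11}, is correct and matches the paper.
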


\begin{proof}
It is clear that $|\DO(L)|=1=2^{0} $ if $n=1$.
Also, Proposition \mref{p:2000} tells us that
 $|\DO(L)|=2^{n-1} $ if $n=2$ or $3$. So we assume that $n\geq 4$.

 Let
  $L=\{0, a_{1}, a_{2}, \cdots, a_{n-2}, 1\}$  with $0< a_{1}< a_{2}< \cdots< a_{n-2}< 1$. Denote $a_{n-1}:=1$ and $a_0:=0$. By Proposition \mref{pro:201}, any
$d\in \DO(L)$ can be obtained as follows. There is $0\leq i\leq n-1$, such that $d(a_j)=a_j$ for $0\leq j\leq i$ and $d$ is an antitone from $\{a_{i+1},\cdots,a_{n-1}\}$ (of cardinality $n-i-1$) to $\{a_0,\ldots,a_{i}\}$ (of cardinality $i+1$). By Lemma~\mref{lem:antinum}, the number of such antitones is $\bincc{n-i-1+i+1-1}{n-i-1}=\bincc{n-1}{i}$. Thus the cardinality of $\DO(L)$ is
$ \sum\limits_{i=0}^{n-1} \bincc{n-1}{i} = 2^{n-1}$ which is also the number of isomorphic classes of derivations on $L$ by Lemma \mref{lem:11}.
\end{proof}

\subsubsection{Classification on diamond type lattices}

Let $n\geq 3$ be a positive integer and let $M_{n}=\{0, b_{1}, b_{2}, \cdots, b_{n-2}, 1\}$ be the diamond type lattice with Hasse diagram
\vspace{-.5cm}
\begin{center}
\setlength{\unitlength}{0.7cm}
\begin{picture}(4,5)
\thicklines
\put(2.0,1.0){\line(-2,1){2.5}}
\put(2.0,1.0){\line(-1,1){1.2}}
\put(2.0,1.0){\line(0,1){1.2}}
\put(2.0,1.0){\line(1,1){1.2}}
\put(2.0,1.0){\line(2,1){2.5}}
\put(2.0,1.0){\line(4,1){4.6}}
\put(2.0,3.5){\line(-2,-1){2.5}}
\put(2.0,3.5){\line(-1,-1){1.2}}
\put(2.0,3.5){\line(0,-1){1.2}}
\put(2.0,3.5){\line(1,-1){1.2}}
\put(2.0,3.5){\line(2,-1){2.5}}
\put(2.0,3.5){\line(4,-1){4.6}}
\put(2.0,0.5){$0$}
\put(-1.2,2.1){$b_{1}$}
\put(0.1,2.1){$b_{2}$}
\put(1.3,2.1){$b_{3}$}
\put(2.5,2.1){$b_{4}$}
\put(3.4,2.1){$\cdots$}
\put(5.0,2.1){$\cdots$}
\put(6.9,2.1){$b_{n-2}$}
\put(2.0,3.7){$1$}
\put(1.9,0.9){$\bullet$}
\put(-0.6,2.1){$\bullet$}
\put(0.6,2.1){$\bullet$}
\put(1.9,2.1){$\bullet$}
\put(3.1,2.1){$\bullet$}
\put(4.3,2.1){$\bullet$}
\put(6.5,2.1){$\bullet$}
\put(0.5,0.0){Diamond type lattice $M_{n}$}
\end{picture}
\end{center}
In the rest of this section, we will determine isomorphic classes of derivations on $M_{n}$. We first give a simple characterization of derivations on $M_n$.

\begin{lemma}
	Let  $d$ be an operator on the lattice $M_{n}$ such that $1\not\in  \Fix_{d}(M_{n})$. Then
	$d\in \DO(M_{n})$ if and only if
	\begin{enumerate}
		\item  $d(1)\in \Fix_{d}(M_{n})$ and
		\mlabel{it:01001}
		\item $d(w)=0$ for each $w\in M_n\backslash (\Fix_d(M_n)\cup\{1\})$.
		\mlabel{it:01002}
	\end{enumerate}
	\mlabel{lem:0100}
\end{lemma}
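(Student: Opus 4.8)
The plan is to verify the derivation identity \meqref{eq:der} case-by-case on pairs $(x,y)\in M_n\times M_n$, exploiting the extremely simple meet structure of $M_n$: for distinct atoms $b_i\neq b_j$ one has $b_i\wedge b_j=0$, while $b_i\wedge 0=0$, $b_i\wedge 1=b_i$, and meets with $0$ or $1$ are trivial. First I would record the elementary structural facts about $M_n$ and note that by Corollary \mref{c:200} every $d\in\DO(M_n)$ satisfies $\Fix_d(M_n)=d(M_n)$, so condition \ref{it:01001} just says $d(1)$ is in the image of $d$, and note that $d(0)=0$ always by Proposition \mref{pro:201}\ref{it:2011}.

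For the forward direction, assume $d\in\DO(M_n)$ with $1\notin\Fix_d(M_n)$. Condition \ref{it:01001} is immediate from Corollary \mref{c:200}. For \ref{it:01002}, take $w\in M_n\setminus(\Fix_d(M_n)\cup\{1\})$; then $w$ is either $0$ (where $d(0)=0$) or an atom $b_i$. If $w=b_i$, pick any other atom $b_j$ (this uses $n\geq 3$, so at least two atoms exist), and apply \meqref{eq:der} to $x=1$, $y=b_i$: since $1\wedge b_i=b_i$ is not fixed, one gets $d(b_i)=(d(1)\wedge b_i)\vee(1\wedge d(b_i))=(d(1)\wedge b_i)\vee d(b_i)$, which is automatic; instead the useful computation is on $x=b_i$, $y=b_j$: $0=d(0)=d(b_i\wedge b_j)=(d(b_i)\wedge b_j)\vee(b_i\wedge d(b_j))$. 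Since $d(b_i)\leq b_i$ and $d(b_j)\leq b_j$ with $b_i\wedge b_j=0$, each term collapses only if $d(b_i),d(b_j)\in\{0\}$ or they are atoms; the point is that if $d(b_i)=b_i$ then $b_i$ would be fixed, contradiction, so $d(b_i)\in\{0\}\cup(\text{atoms }\neq b_i)$, and then $d(b_i)\wedge b_j$ plus $b_i\wedge d(b_j)=0$ forces — after also running the symmetric argument and using $d(b_i)\le b_i$ — that $d(b_i)=0$. I would write this out carefully: the cleanest route is that $d(b_i)\le b_i$ means $d(b_i)\in\{0,b_i\}$, and $d(b_i)=b_i$ is excluded by hypothesis, hence $d(b_i)=0$. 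So in fact \ref{it:01002} needs no case analysis at all beyond $d(b_i)\leq b_i$ and $b_i\notin\Fix_d$; I would present it that way.

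For the converse, assume $d$ is an operator on $M_n$ with $1\notin\Fix_d(M_n)$ satisfying \ref{it:01001} and \ref{it:01002}; I must also note $d(0)=0$, which follows since $0\notin\{1\}$ and if $0\notin\Fix_d$ then \ref{it:01002} gives $d(0)=0$, while if $0\in\Fix_d$ then $d(0)=0$ trivially. Observe that \ref{it:01002} combined with $d(x)\le x$ (which we should check holds: on $\Fix_d$ it is equality, on $1$ it is $d(1)\in\Fix_d\subseteq M_n$ with $d(1)\ne 1$ so $d(1)$ is $0$ or an atom, hence $d(1)<1$, and on the remaining elements $d$ is $0$) gives $d(x)\leq x$ throughout. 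Then I verify \meqref{eq:der} for all $x,y$: if $x=y=1$, LHS $=d(1)$ and RHS $=(d(1)\wedge 1)\vee(1\wedge d(1))=d(1)$. If exactly one of $x,y$ is $1$, say $y=1$: LHS $=d(x\wedge 1)=d(x)$, RHS $=(d(x)\wedge 1)\vee(x\wedge d(1))=d(x)\vee(x\wedge d(1))=d(x)$ since $x\wedge d(1)\leq d(1)\leq$ hmm — here I need $x\wedge d(1)\leq d(x)$, which holds when $x\in\Fix_d$ (then $d(x)=x\geq x\wedge d(1)$) and when $x\notin\Fix_d\cup\{1\}$ we need $x\wedge d(1)=0$; this is where \ref{it:01002} does real work: if $x$ is an atom $b_i$ not fixed then $d(1)$, being $0$ or an atom in $\Fix_d$, is $\ne b_i$, so $b_i\wedge d(1)=0=d(b_i)$. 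If neither $x$ nor $y$ is $1$, then $x\wedge y\in\{0\}\cup\{\text{atoms}\}$ and $x\wedge y\neq 1$; when $x=y$ (an atom or $0$) both sides equal $d(x)$ using $d(x)\le x$; when $x\ne y$ are distinct atoms, $x\wedge y=0$ so LHS $=d(0)=0$, and RHS $=(d(x)\wedge y)\vee(x\wedge d(y))$ where each of $d(x),d(y)$ is either $0$ or (if fixed) the atom itself, so $d(x)\wedge y\le x\wedge y=0$ and $x\wedge d(y)\le x\wedge y=0$, giving RHS $=0$; the case one of $x,y$ is $0$ is trivial. I expect the main obstacle to be organizing these cases without redundancy and making sure the hypothesis $1\notin\Fix_d(M_n)$ is invoked exactly where needed — namely to conclude $d(1)\ne 1$ so that $d(1)$ is comparable only to $\le$ with atoms it does not equal, and to kill the possibility $d(b_i)=b_i$ in the forward direction.
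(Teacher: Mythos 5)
Your proposal is correct and follows essentially the same route as the paper: the forward direction extracts condition (ii) from $d(w)\leq w$ together with $w\notin\Fix_d(M_n)$ (the paper phrases this via $d(w)\in d(M_n)=\Fix_d(M_n)$, but it is the same one-line argument), and the converse is the same case-by-case verification of the Leibniz identity, split according to whether $x\wedge y=0$ or one of the arguments is $1$, with the hypothesis $1\notin\Fix_d(M_n)$ used exactly where you use it, namely to get $d(1)\wedge x=0$ when $x$ is a non-fixed atom. No gaps; only the exploratory detour in your forward direction should be pruned in the final write-up.
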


\begin{proof}
If $d\in \DO(M_{n})$, then $d(1)\in d(L)=\Fix_{d}(M_{n})$ by Corollary \mref{c:200}, giving \mref{it:01001}. Also, for any
$w\in M_n\backslash (\Fix_d(M_n)\cup\{1\})$,
we have  $d(w)\leq w$ by Proposition \mref{pro:201} and so $d(w)=0$, since
$w\not\in \Fix_{d}(M_{n})\cup \{1\}$ and $d(w)\in d(L)=\Fix_{d}(M_{n})$. Thus \mref{it:01002} holds.

Conversely, suppose that $d$ satisfies conditions \mref{it:01001} and \mref{it:01002}, and  $1\not\in  \Fix_{d}(M_{n})$.
It is easy to see that $d(0)=0$ and $d(x)\leq x$
for any $x\in M_{n}$ and so $d(x\wedge x)=d(x)=(d(x)\wedge x)\vee (x\wedge d(x))$.
So to verify that $d$ is in $\DO(M_{n})$, we only need to verify the Leibniz rule in \eqref{eq:der} for $x, y\in M_{n}$ with $x\neq y$.

If $x\wedge y=0$, then $d(x)\wedge y=x\wedge d(y)=0$ since $d(x)\leq x$ and $d(y)\leq y$, which implies that
$d(x\wedge y)=d(0)=0=(d(x)\wedge y)\vee (x\wedge d(y))$.

If $x\wedge y\neq 0$, then either $x=1$ and $y\in \{ b_{1}, b_{2}, \cdots, b_{n-2}\}$, or $y=1$ and $x\in \{ b_{1}, b_{2}, \cdots, b_{n-2}\}$. Without loss of generality, assume that $x=b_{1}$ and $y= 1$. Then we have
 $d(x\wedge y)=d(x)$ and $(d(x)\wedge y)\vee (x\wedge d(y))=d(x)\vee (x\wedge d(1))$. There are the following cases to consider:
\begin{enumerate}
 \item[$\bullet$]  $d(1)=x$. Then $d(x)=x$ by condition \mref{it:01001}, which implies that
  $d(x\wedge y)=d(x)=x=d(x)\vee (x\wedge d(1))=(d(x)\wedge y)\vee (x\wedge d(y))$.
  \item[$\bullet$] $d(1)\neq x$. Since $d(1)\in \Fix_{d}(M_{n})$ but $1\not\in \Fix_{d}(M_{n})$, we have $d(1)\neq 1$ and so
   $d(1)\wedge x=0$, which implies that
  $d(x\wedge y)=d(x)=d(x)\vee (x\wedge d(1))=(d(x)\wedge y)\vee (x\wedge d(y))$.
\end{enumerate}

Therefore, we conclude that $d\in \DO(M_{n})$.
\end{proof}

We now characterize when two derivations on $M_n$ are isomorphic.

\begin{lemma}
	Let $d, d'\in \DO(M_{n})$. Then $d\cong d'$ if and only if
	\begin{enumerate}
		\item  $|\Fix_d(M_n)|=|\Fix_{d'}(M_n)|$ and
		\mlabel{it:01011}
		\item $d(1)$ and $d'(1)$ are either both zero or both nonzero.
		\mlabel{it:01012}
	\end{enumerate}
\mlabel{lem:0101}
\end{lemma}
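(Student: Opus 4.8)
The plan is to prove the two implications in turn. Necessity will be immediate from Lemma~\ref{lll:30}: if $d\cong d'$, then Lemma~\ref{lll:30}\ref{it:32} gives $|\Fix_d(M_n)|=|\Fix_{d'}(M_n)|$, which is condition~\ref{it:01011}, and Lemma~\ref{lll:30}\ref{it:31} gives $d(1)=0\iff d'(1)=0$, which is condition~\ref{it:01012}. For sufficiency, assume conditions~\ref{it:01011} and~\ref{it:01012}. I would first dispose of the degenerate case: if $1\in\Fix_d(M_n)$, then $d=\mrep_{M_n}$ by Proposition~\ref{pro:201}\ref{it:2014}, so $|\Fix_d(M_n)|=n$, and then condition~\ref{it:01011} forces $\Fix_{d'}(M_n)=M_n$, i.e.\ $d'=\mrep_{M_n}=d$, whence $d\cong d'$ trivially. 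The same argument shows that, under condition~\ref{it:01011}, $1\notin\Fix_d(M_n)$ forces $1\notin\Fix_{d'}(M_n)$; so I may henceforth assume $1\notin\Fix_d(M_n)$ and $1\notin\Fix_{d'}(M_n)$, which places both $d$ and $d'$ in the scope of Lemma~\ref{lem:0100}.

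Next I would read off the structure of such $d$. Since the only elements of $M_n$ lying below an atom $b_i$ are $0$ and $b_i$, Proposition~\ref{pro:201}\ref{it:2011} gives $d(b_i)\in\{0,b_i\}$ for each $i$, so $\Fix_d(M_n)=\{0\}\cup\{b_i\mid d(b_i)=b_i\}$. Setting $S:=\{\,i\mid d(b_i)=b_i\,\}$, Lemma~\ref{lem:0100} then shows that $d$ is completely determined by $S$ together with the value $d(1)\in\Fix_d(M_n)$: it fixes $0$ and each $b_i$ with $i\in S$, sends each $b_i$ with $i\notin S$ to $0$, and sends $1$ to $d(1)$. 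Likewise $d'$ is determined by $S':=\{\,i\mid d'(b_i)=b_i\,\}$ and $d'(1)\in\Fix_{d'}(M_n)$; by condition~\ref{it:01011} we have $|S|=|S'|$, hence also $|\{1,\dots,n-2\}\setminus S|=|\{1,\dots,n-2\}\setminus S'|$.

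The main step is the construction of the witnessing automorphism. I would pick a permutation $\sigma$ of $\{1,\dots,n-2\}$ with $\sigma(S)=S'$ and, in the case $d(1)\ne 0$ — where $d(1)=b_j$ with $j\in S$, and by condition~\ref{it:01012} $d'(1)=b_{j'}$ with $j'\in S'$ — additionally $\sigma(j)=j'$; such a $\sigma$ exists because $|S|=|S'|$ and $j\in S$, $j'\in S'$. Then $f\colon M_n\to M_n$ defined by $f(0)=0$, $f(1)=1$, $f(b_i)=b_{\sigma(i)}$ is a lattice automorphism, and I would finish by checking $fd=d'f$ at $0$, at $1$, at $b_i$ for $i\in S$, and at $b_i$ for $i\notin S$ — each a one-line computation from the explicit descriptions above, the value at $1$ using $f(d(1))=d'(1)$ (which holds since $f(0)=0$ when $d(1)=0$ and since $\sigma(j)=j'$ when $d(1)\ne 0$). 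The only genuine difficulty is organizational: one must check that a single $\sigma$ can realize $\sigma(S)=S'$ and $\sigma(j)=j'$ at once and treat the subcases $d(1)=0$ and $d(1)\ne 0$ uniformly (the constraint at $j$ being vacuous in the former), and one should recheck the degenerate case handled at the start together with the claim that Lemma~\ref{lem:0100} determines $d$ on all of $M_n$.
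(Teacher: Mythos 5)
Your proposal is correct and follows essentially the same route as the paper: necessity from Lemma~\ref{lll:30}, the degenerate case $1\in\Fix_d(M_n)$ handled via Proposition~\ref{pro:201}, and then an automorphism permuting the atoms that matches fixed atoms to fixed atoms and sends $d(1)$ to $d'(1)$, with the intertwining checked on each type of element using Lemma~\ref{lem:0100}. The only difference is organizational: the paper splits the construction into the two cases $d(1)=d'(1)=0$ and $d(1),d'(1)\neq 0$, whereas you treat them uniformly by making the constraint $\sigma(j)=j'$ vacuous in the former case.
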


\begin{proof}
Let  $d, d'\in \DO(M_{n})$.
If $d\cong d'$, then by Lemma \mref{lll:30},  conditions \mref{it:01011} and \mref{it:01012} hold.

Conversely, assume that $d, d'$ satisfy conditions \mref{it:01001} and \mref{it:01002}.

If $1\in \Fix_d(M_n)$, that is, $d(1)=1$, then $d=\mrep_{M_{n}}$ by Proposition \mref{pro:201} and so
$\Fix_d(M_n)=M_{n}$. It follows from condition \mref{it:01001} that $\Fix_{d'}(M_n)=M_{n}$ and thus
$d'=\mrep_{M_{n}}$. Therefore $d\cong d'$. Similarly, if  $1\in \Fix_{d'}(M_n)$, then we obtain $d=d'=\mrep_{M_{n}}$.

If $1\not\in \Fix_d(M_n)$ and $1\not\in \Fix_{d'}(M_n)$, then since $d(0)=d'(0)=0 $, we can assume
by  condition \mref{it:01001} that
\begin{center}
 $\Fix_{d}(M_{n})=\{0,  b_{i_{1}}, b_{i_{2}}, \cdots, b_{i_{k}}\}$ and
$\Fix_{d'}(M_{n})=\{0,  b_{j_{1}}, b_{j_{2}}, \cdots, b_{j_{k}}\}$,
\end{center}
where  $1\leq i_{1}<i_{2}<\cdots< i_{k}\leq n-2$ and $1\leq j_{1}<j_{2}<\cdots< j_{k}\leq n-2$.
By condition \mref{it:01002},
we consider the following two cases:

\textbf{Case $(1)$}: $d(1)=d'(1)=0$.

In this case, let $g: M_{n}\rightarrow M_{n}$ be a bijection such that $g(0)=0, g(1)=1$ and
  $g(b_{i_{\ell}})=b_{j_{\ell}}$ for $1\leq \ell\leq k$.
It is easy to see that $g$ is a lattice automorphism and $g(\Fix_{d}(M_{n}))=\Fix_{d'}(M_{n})$.
To prove that $gd=d'g$, let $x\in M_{n}$ and consider several cases.

If $x=1$, then $g(d(x))=g(d(1))=g(0)=0=d'(1)=d'(g(1))=d'(g(x))$, since $g(1)=1$, $g(0)=0$ and $d(1)=d'(1)=0$.

If  $x\in \Fix_{d}(M_{n})$, then $g(x)\in \Fix_{d'}(M_{n})$ and so
$d'(g(x))=g(x)=g(d(x))$.

If $x\in M_{n}\backslash (\Fix_{d}(M_{n})\cup \{1\})$, then $g(x)\in M_{n}\backslash (\Fix_{d'}(M_{n})\cup \{1\})$ and so  $d(x)=0$ and $d'(g(x))=0$
by Lemma \mref{lem:0100}. Consequently, we get $d'(g(x))=0=g(0)=g(d(x))$.

Therefore  $d'(g(x))=g(d(x))$ for any $x\in M_{n}$, which implies that $gd=d'g$. Hence $d\cong d'$.

\textbf{Case $(2)$}:   $d(1)\neq 0$ and $ d'(1)\neq 0$.

In this case, let $h: M_{n}\rightarrow M_{n}$ be a bijection such that $h(0)=0, h(1)=1$,
  $h(d(1))=d'(1)$  and $h(x)\in \{ b_{j_{1}}, b_{j_{2}}, \cdots, b_{j_{k}}\}\backslash \{d'(1)\}$
  for any
  $x \in\{ b_{i_{1}}, b_{i_{2}}, \cdots, b_{i_{k}}\}\backslash \{d(1)\}$.
It is easy to see that $h$ is  a lattice automorphism and $h(\Fix_{d}(M_{n}))=\Fix_{d'}(M_{n})$.
To prove that $hd=d'h$, let $x\in M_{n}$ and consider the following cases.

If $x=1$, then $h(d(x))=h(d(1))=d'(1)=d'(h(1))=d'(h(x))$, 
since $h(1)=1$.

If $x\in \Fix_{d}(M_{n})$, then
$h(x)\in \Fix_{d'}(M_{n})$ and so
$d'(h(x))=h(x)=h(d(x))$.

If  $x\in M_{n}\backslash (\Fix_{d}(M_{n})\cup \{1\})$, then
 $h(x)\in M_{n}\backslash (\Fix_{d'}(M_{n})\cup \{1\})$,
  and so  $d(x)=0$ and $d'(h(x))=0$ by Lemma \mref{lem:0100}.
Consequently, we obtain $d'(h(x))=0=h(0)=h(d(x))$.

Therefore  $d'(h(x))=h(d(x))$ for any $x\in M_{n}$, which implies that $hd=d'h$. Hence $d\cong d'$.
\end{proof}

Here is our classification of isomorphic classes of derivations on $M_n$.
\begin{theorem}
For any integer  $n\geq 3$, $|\DO(M_{n})|=2+\sum\limits_{k=1}^{n-2}(k+1)\binc{n-2}{k}$ and
there are exactly $2(n-1)$ isomorphic classes of derivations on $M_{n}$.
\mlabel{t:00}
\end{theorem}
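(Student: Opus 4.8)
The plan is to count derivations on $M_n$ by sorting them according to the fixed-point set, using Lemma~\mref{lem:0100} to describe all derivations with $1\notin\Fix_d(M_n)$, and then separately handling the single derivation $\mrep_{M_n}$ (the only one with $1\in\Fix_d(M_n)$, by Proposition~\mref{pro:201}\mref{it:2014}). For the count of isomorphic classes, I would invoke Lemma~\mref{lem:0101}, which says that the isomorphism class of $d$ is determined by the pair $\big(|\Fix_d(M_n)|,\ [d(1)=0\text{ or }d(1)\neq 0]\big)$; so the task reduces to enumerating which such pairs are actually realized.

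\textbf{Step 1: count $|\DO(M_n)|$.} First I would dispose of $\mrep_{M_n}$, which contributes $1$. For the remaining derivations, $1\notin\Fix_d(M_n)$, so Lemma~\mref{lem:0100} applies: such a $d$ is determined by choosing the fixed-point set $\Fix_d(M_n)=\{0,b_{i_1},\dots,b_{i_k}\}$ (a subset of $\{b_1,\dots,b_{n-2}\}$ of some size $k$, $0\le k\le n-2$), then choosing $d(1)\in\Fix_d(M_n)$, i.e. $d(1)\in\{0,b_{i_1},\dots,b_{i_k}\}$ — that is $k+1$ choices — while $d$ is forced to be $0$ on all other non-top elements. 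Conversely every such choice yields a derivation by Lemma~\mref{lem:0100}, provided one checks consistency: if $d(1)=b_{i_\ell}\neq 0$ then indeed $d(1)\in\Fix_d(M_n)$ holds by construction, and the listed value on $1$ does not create a new fixed point unless $d(1)=1$, which is excluded. I should be mildly careful about the $k=0$ case: then $\Fix_d(M_n)=\{0\}$, the only choice is $d(1)=0$, and $d=\textbf{0}_{M_n}$; this gives the ``$+2$'' together with $\mrep_{M_n}$ once we note the $k\ge 1$ terms start the sum. Hence
\[
|\DO(M_n)| = 1 + \sum_{k=0}^{n-2}(k+1)\binc{n-2}{k} = 2 + \sum_{k=1}^{n-2}(k+1)\binc{n-2}{k},
\]
where the $k=0$ term equals $1$ and combines with the $\mrep_{M_n}$ count to give $2$.

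\textbf{Step 2: count isomorphic classes.} By Lemma~\mref{lem:0101}, two derivations are isomorphic iff they agree on the invariants $|\Fix_d(M_n)|$ and ``$d(1)=0$ versus $d(1)\neq 0$''. So I enumerate the realizable pairs. For $\mrep_{M_n}$: $|\Fix|=n$, $d(1)=1\neq 0$; that is one class. For $d$ with $1\notin\Fix_d(M_n)$: by Step~1 the fixed-point set has size $k+1$ with $0\le k\le n-2$, i.e. $|\Fix_d(M_n)|\in\{1,2,\dots,n-1\}$. For each such size $m=k+1$: if $m=1$ (i.e. $k=0$), the only option is $d(1)=0$, giving one class ($\textbf{0}_{M_n}$); if $2\le m\le n-1$, both $d(1)=0$ and $d(1)\neq 0$ are realizable (choose $d(1)=b_{i_1}$ for the latter), giving two classes each. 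Summing: $1$ (identity) $+\ 1$ (zero) $+\ 2(n-2)$ (the sizes $m=2,\dots,n-1$) $= 2(n-1)$, as claimed.

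\textbf{Main obstacle.} The routine part is the two binomial bookkeeping arguments; the one place demanding genuine care is the verification in Step~1 that the correspondence ``derivation $\leftrightarrow$ (fixed-point set, value at $1$)'' is an exact bijection — in particular that every admissible choice of $d(1)\in\Fix_d(M_n)$ with $d(1)\neq 1$ genuinely produces the prescribed fixed-point set and nothing more (one must confirm $d$ does not acquire extra fixed points among the $b_i$ forced to $0$, which is immediate since those $b_i\neq 0$). This is exactly what Lemma~\mref{lem:0100} is engineered to deliver, so the proof is short once that lemma is in hand; I would also remark that the reconciliation of the ``$+2$'' with ``$1+$ (the $k=0$ term)'' is the only spot where an off-by-one slip is easy.
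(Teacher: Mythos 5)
Your proposal is correct and follows essentially the same route as the paper: stratify derivations by the fixed-point set, count $(k+1)\binc{n-2}{k}$ choices for each $k$ via Lemma~\mref{lem:0100}, and count classes via Lemma~\mref{lem:0101}. The only cosmetic differences are that you fold the $k=0$ term into the sum instead of treating $\Fix_d(M_n)=\{0\}$ as a separate case, and you handle $n=3$ uniformly where the paper defers it to the chain classification.
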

\begin{proof}
Since $M_{3}$ is a $3$-element chain, the result is true for $n=3$ by Theorem \mref{the:001}. So we now assume $n\geq 4$.
Let $d\in \DO(M_{n})$. Notice that $0\in \Fix_{d}(M_{n})$.

If $|\Fix_{d}(M_{n})|=1$ which means that $\Fix_{d}(M_{n})=\{0\}$, then $d(x)=0$ for any $x\in M_{n}$, since $d(x)\in d(L)=\Fix_{d}(M_{n})$
by Corollary \mref{c:200}. Thus $d=\textbf{0}_{L}$ is the only choice in this case.

If $|\Fix_{d}(M_{n})|=n$ which means that $1\in \Fix_{d}(M_{n})$, then $d=\mrep_{L}$ by Proposition \mref{pro:201} \mref{it:2015} is the only choice in this case.

If $|\Fix_{d}(M_{n})|=k+1$ for some $1\leq k\leq n-2$, then $1\not\in \Fix_{d}(M_{n})$.
By Lemma \mref{lem:0100},
 $d(x)=0$ for any
$x\in  M_{n}\backslash (\Fix_{d}(M_{n})\cup \{1\})$ and
$d(1)$ is in $\Fix_{d}(M_{n})$.
Thus in this case, $d$ has exactly $(k+1)\binc{n-2}{k}$ choices.
Also, by Lemma \mref{lem:0101}, there are only two isomorphic classes of derivations in this case.

Summarizing the above cases, we conclude that $|\DO(M_{n})|=2+\sum\limits_{k=1}^{n-2}(k+1)\binc{n-2}{k}$
and there are exactly $2+2(n-2)=2(n-1)$ isomorphic classes of derivations on $M_{n}$.
\end{proof}

\section{The lattices of derivations}
\mlabel{sec:latder}
\nc{\opset}{\mathrm{O}}

In this section we study the set of derivations on a given lattice as a whole and consider lattice structures on the set. Such structures are obtained when conditions are imposed on either the lattice or on the derivations. Motivated by these evidences, we propose the conjectures that the poest of derivations on a lattice is again a lattice, and lattices are determined by their posets or lattices of derivations. 

\subsection{Lattice structures for derivations on distributive lattices}

Let $(L, \vee, \wedge,  0, 1)$ be a lattice and let $\opset(L)$ denote the set of all operators on $L$.
We define a relation $\preceq$ on $\opset(L)$.  For any $d, d'\in \opset(L)$,
define $d\preceq d'$ if $ d(x)\leq d'(x)$ for any $x\in L$.
It is easy to verify that  $\preceq$ is a partial order on $\opset(L)$ and
$\textbf{0}_{L}\preceq d \preceq \textbf{1}_{L}$ for any $d\in \opset(L)$, where 
$\textbf{1}_{L}$ is defined by
$\textbf{1}_{L}(x):=1$ for any $x\in L$. For any $d\in \DO(L)$, we have
$\textbf{0}_{L}\preceq d \preceq \mrep_{L}$ 
 since $0\leq d(x)\leq x$ for any $x\in L$.
 
We also define the following binary operations on $\opset(L)$. For $d, d'\in \opset(L)$, set
$$(d\vee d')(x):=d(x)\vee d'(x), \quad
 (d\cup d')(x):=x\wedge (d(1)\vee d'(1)),$$
 $$  (d\wedge d')(x):=d(x)\wedge d'(x), \quad (d\circ d')(x):=d(d'(x))
\quad \text{ for any } x\in L.$$
Of course the operation $\circ$ is just the composition. We retain the notion $\circ$ here to emphasize that it is a binary operation of operators. 

\begin{lemma} \mlabel{lem:701}
	Let $(L, \vee, \wedge,  0, 1)$ be a  lattice. Then $(\opset(L), \preceq, \textbf{0}_{L},\textbf{1}_{L} )$ is also a bounded lattice for which $d\vee d'$ and $d\wedge d'$ are, respectively, the least upper bound and the greatest lower bound of $d$ and $d'$.
\end{lemma}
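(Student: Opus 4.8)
The statement to prove is Lemma \ref{lem:701}: that $(\opset(L), \preceq, \textbf{0}_{L}, \textbf{1}_{L})$ is a bounded lattice with $d \vee d'$ and $d \wedge d'$ as joins and meets. The plan is to verify the two defining properties of a lattice directly: first, that $d \vee d'$ is the least upper bound of $\{d, d'\}$ in the poset $(\opset(L), \preceq)$, and second, that $d \wedge d'$ is the greatest lower bound; the bounds $\textbf{0}_L, \textbf{1}_L$ have already been noted in the surrounding text. Since the partial order $\preceq$ is defined pointwise and the operations $\vee, \wedge$ on $\opset(L)$ are defined pointwise, everything reduces to the corresponding fact in $L$ itself.

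First I would check that $d \vee d'$ is an upper bound: for each $x \in L$ we have $d(x) \leq d(x) \vee d'(x) = (d \vee d')(x)$, so $d \preceq d \vee d'$, and symmetrically $d' \preceq d \vee d'$. Next, suppose $e \in \opset(L)$ satisfies $d \preceq e$ and $d' \preceq e$; then for each $x$, $d(x) \leq e(x)$ and $d'(x) \leq e(x)$, so $d(x) \vee d'(x) \leq e(x)$ by the least-upper-bound property of $\vee$ in the lattice $L$, i.e.\ $(d \vee d')(x) \leq e(x)$ for all $x$, giving $d \vee d' \preceq e$. Hence $d \vee d'$ is the least upper bound. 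The argument for $d \wedge d'$ being the greatest lower bound is entirely dual, using the greatest-lower-bound property of $\wedge$ in $L$. Finally, $\textbf{0}_L \preceq d \preceq \textbf{1}_L$ for all $d$ was already observed, so the lattice is bounded with the stated bottom and top.

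There is really no main obstacle here: the lemma is a routine transfer of the lattice structure on $L$ to the function space $\opset(L)$ via the pointwise order, and the only thing to be careful about is keeping the quantifier ``for all $x \in L$'' in the right place when passing between the pointwise inequalities and the relation $\preceq$. I would present the proof as two short paragraphs (join, then meet), each a two-line pointwise verification, and remark that boundedness has already been established. The one point worth stating explicitly is that this lemma concerns the ambient lattice $\opset(L)$ of \emph{all} operators, not the subset $\DO(L)$ of derivations — the latter need not be closed under the pointwise $\vee$ and $\wedge$ in general, which is precisely why the later results (Theorem \ref{pro:480}, Theorem \ref{p:401}) require distributivity or completeness hypotheses.
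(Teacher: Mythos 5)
Your proof is correct, but it takes a more elementary route than the paper. The paper disposes of this lemma in one line: since lattices form a variety and $\opset(L)$ is the direct product of $|L|$ copies of $L$ (i.e.\ $L^L$ with componentwise operations), it is automatically a bounded lattice with the pointwise join and meet, by the standard universal-algebra fact that a variety is closed under direct products. Your argument unpacks exactly what that citation delivers: the pointwise verification that $d\vee d'$ is the least upper bound and $d\wedge d'$ the greatest lower bound, with the quantifier over $x\in L$ handled correctly. What the paper's approach buys is brevity and consistency with the universal-algebra framing used throughout (the lemma becomes an instance of a general closure property); what your approach buys is self-containedness for a reader who does not want to chase the reference. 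Your closing remark --- that the lemma concerns all operators and that $\DO(L)$ need not be closed under the pointwise operations, which is why the later theorems need extra hypotheses --- is accurate and matches the paper's own Remark on $d\vee d'$ and $d\wedge d'$ failing to be derivations in general.
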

\begin{proof}
	Since the class of all lattices is a variety and $\opset(L)$ is the direct product of $|L|$ copies of $L$, the lemma follows immedialtely from the usual notions of universal algebra \cite[Definition 7.8]{BS}.
\end{proof}

We next explore when these operators are derivations.

\begin{remark} Let $d, d'\in \DO(L)$.
	\begin{enumerate}
		\item By definition, the operator $d\cup d'=d_{d(1)\vee d'(1)}$ is an inner derivation.
		\item The operator $d\vee d'$ is not necessarily a derivation as shown by the following example. See Lemma~\mref{lem:40} for the case when $L$ is distributive.
		
Let $M_{5}=\{0, b_{1}, b_{2},  b_{3}, 1\}$ be the modular lattice in Remark \mref{re:000} and let $d=d_{b_{1}}, d'=d_{b_{3}}$, that is,
		$d(x)=x\wedge b_{1}$ and $d'(x)=x\wedge b_{3}$ for any $x\in M_{5}$.
		Then $d, d'\in \IDO(M_{5})$ by Proposition \mref{the:000}. Since $(d\vee d')(1)=d(1)\vee d'(1)=b_{1}\vee b_{3}=1$
		and $(d\vee d')(b_{2})=d(b_{2})\vee d'(b_{2})=0\vee 0=0$, we have by Proposition \mref{pro:201}
		that $d\vee d'\not\in \DO(M_{5})$.
		
		\item The operators $d\circ d'$ and $d\wedge d'$ are not necessarily derivations even if $(L, \vee, \wedge,  0, 1)$ is a Boolean lattice.
		For example, let $B_{8}=\{0, a, b, c, u, v, w, 1\}$ be the $8$-elements Boolean lattice in Example \mref{exm:000}.
		Then $\lambda^{(u)}, \lambda^{(v)}\in \DO(B_{8})$, but it is routine to verify that
		$\lambda^{(u)}\circ \lambda^{(v)}=\lambda^{(u)}\wedge \lambda^{(v)}=\lambda^{(a)}\not\in \DO(B_{8})$.
	\end{enumerate}
	\mlabel{remark:41}
\end{remark}

So in general, the set $\DO(L)$ is not closed under the operations $\vee, \circ$ or $\wedge$. We next consider the case when $L$ is a distributive lattice.

\begin{lemma}
	Let $(L, \vee, \wedge,  0, 1)$ be a distributive lattice. Then $d\vee d'$ is in $\DO(L)$ for any $d, d'\in \DO(L)$.
		\mlabel{it:lem401}
\mlabel{lem:40a}
\end{lemma}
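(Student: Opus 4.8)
The plan is to verify directly that $D := d \vee d'$ satisfies the Leibniz rule \eqref{eq:der}, using distributivity to expand the products. First I would record the basic facts available from Proposition \mref{pro:201}: for any $d \in \DO(L)$ and any $x \in L$ we have $d(x) \leq x$, and $x \wedge d(y) \leq d(x\wedge y)$, which in particular gives $d(x)\wedge y \leq d(x\wedge y)$ by applying the latter with the roles of $x,y$ swapped and using $d(y)\wedge x \le d(x \wedge y)$. Thus $D(x) = d(x)\vee d'(x) \le x$ for all $x$, so the ``$\geq$'' half of the Leibniz identity, namely $(D(x)\wedge y)\vee(x\wedge D(y)) \le D(x\wedge y)$, will follow from the inequalities $d(x)\wedge y \le d(x\wedge y) \le D(x\wedge y)$, etc., without needing distributivity.

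For the reverse inequality, I would compute
\[
D(x\wedge y) = d(x\wedge y)\vee d'(x\wedge y) = (d(x)\wedge y)\vee(x\wedge d(y))\vee(d'(x)\wedge y)\vee(x\wedge d'(y)),
\]
using that $d,d'$ are derivations. On the other side,
\[
(D(x)\wedge y)\vee(x\wedge D(y)) = \big((d(x)\vee d'(x))\wedge y\big)\vee\big(x\wedge(d(y)\vee d'(y))\big),
\]
and here I would invoke distributivity of $L$ to expand each meet over the join:
\[
\big((d(x)\vee d'(x))\wedge y\big) = (d(x)\wedge y)\vee(d'(x)\wedge y), \qquad \big(x\wedge(d(y)\vee d'(y))\big) = (x\wedge d(y))\vee(x\wedge d'(y)).
\]
Substituting, the right-hand side becomes exactly the same four-term join as $D(x\wedge y)$ displayed above, so the two sides are equal. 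This simultaneously gives both inequalities and hence the identity, so the lemma follows.

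The main point — and the only place distributivity is actually used — is the expansion of $(d(x)\vee d'(x))\wedge y$ and $x\wedge(d(y)\vee d'(y))$ over the joins; without it, as the $M_5$ counterexample in Remark~\mref{remark:41} shows, $d\vee d'$ can fail the meet side of the Leibniz rule. There is no real obstacle beyond this routine expansion; the bookkeeping is light since the expression is symmetric in $d$ and $d'$. An alternative, shorter route for the hard direction is simply to observe that distributivity makes the four-term join computation above an identity, so one never needs to verify the two inequalities separately — one verifies equality in one stroke. I would write the proof in that equational form to keep it short.
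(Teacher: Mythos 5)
Your proposal is correct and, in its final equational form, is exactly the paper's proof: expand $(d\vee d')(x\wedge y)$ via the Leibniz rule for $d$ and $d'$, then use distributivity to regroup the four-term join as $((d\vee d')(x)\wedge y)\vee(x\wedge (d\vee d')(y))$. The preliminary discussion of the two separate inequalities is harmless but unnecessary, as you yourself note.
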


\begin{proof} Assume that $(L, \vee, \wedge,  0, 1)$ is a distributive lattice.
	For $d, d'\in \DO(L)$ and $x, y\in L$,
	we have
	\begin{eqnarray*}
		(d\vee d')(x\wedge y)&=& d(x\wedge y)\vee d'(x\wedge y)\\
		&=& ((d(x)\wedge y)\vee (x\wedge d(y)))\vee ((d'(x)\wedge y)\vee (x\wedge d'(y)))\\
		&=&((d(x)\vee d'(x))\wedge y)\vee (x\wedge(d(y)\vee d'(y)))\\
		&=& ((d\vee d')(x)\wedge y)\vee (x\wedge(d\vee d')(y)).
	\end{eqnarray*}
	Thus  $d\vee d'\in\DO(L)$.
\end{proof}

\begin{theorem}
	Let $(L, \vee, \wedge,  0, 1)$ be a lattice. 
	\begin{enumerate}
		\item 
		If $d\vee d'$ and $d\wedge d'$ are in $\DO(L)$ for all $d$ and $d'$ in $\DO(L)$, then
		$(\DO(L), \vee, \wedge,\textbf{0}_L,\mrep_L)$ is a lattice. 
		\mlabel{it:4801}
		\item 	If $L$ is  finite and $d\vee d'$ is in $\DO(L)$  for all $d, d'\in \DO(L)$ $($or $d\wedge d'$ is in $\DO(L)$  for all $d, d'\in \DO(L)$$)$, then $(\DO(L),\preceq, \textbf{0}_L,\mrep_L)$ is a lattice.
		\mlabel{it:4802}
		\item If $L$ is a finite distributive lattice,  then $(\DO(L),\preceq, \textbf{0}_L,\mrep_L)$ is a lattice.
		\mlabel{it:4803}
	\end{enumerate}
	\mlabel{pro:480}
\end{theorem}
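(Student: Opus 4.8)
\textbf{Proof proposal for Theorem \ref{pro:480}.}

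The plan is to prove the three parts in the order stated, with part (i) doing the conceptual work and parts (ii), (iii) reducing to it. For part (i), the hypothesis says that the subset $\DO(L)\subseteq \opset(L)$ is closed under the two operations $\vee$ and $\wedge$ that, by Lemma \ref{lem:701}, compute least upper bounds and greatest lower bounds in the ambient lattice $(\opset(L),\preceq)$. So $\DO(L)$ is a sub-join-semilattice and sub-meet-semilattice of $\opset(L)$; since $d\vee d'$ and $d\wedge d'$ retain their meaning as $\sup$ and $\inf$ within the subposet $(\DO(L),\preceq)$ (a sup computed in a larger poset that happens to land in a subset is still the sup in the subset), $(\DO(L),\preceq)$ is a lattice. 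The bounds are $\textbf{0}_L$ and $\mrep_L$: we noted in the text preceding Lemma \ref{lem:701} that $\textbf{0}_L\preceq d\preceq \mrep_L$ for every $d\in\DO(L)$, and both $\textbf{0}_L$ and $\mrep_L$ lie in $\DO(L)$, so the lattice is bounded with the claimed top and bottom. This is essentially a one-line sublattice argument; I would just cite the relevant universal-algebra fact from \cite{BS} for the ``a closed subset of a lattice under $\vee,\wedge$ is a lattice'' principle.

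For part (ii), the point is that finiteness lets one upgrade ``closed under one of the two operations'' to ``is a lattice''. Suppose $d\vee d'\in\DO(L)$ for all $d,d'\in\DO(L)$ (the other case being symmetric). Since $L$ is finite, $\opset(L)$ is finite, hence so is $\DO(L)$. A finite poset with a top element in which every pair has a least upper bound is automatically a lattice: the greatest lower bound of $d$ and $d'$ can be recovered as $\bigvee\{e\in\DO(L)\mid e\preceq d \text{ and } e\preceq d'\}$, a join over a finite (nonempty, since $\textbf{0}_L$ is in it) set, which exists in $\DO(L)$ by the join-closure hypothesis and by iterating the binary join. So I would state and use the standard fact that a finite join-semilattice with a least element is a lattice, applied to $(\DO(L),\preceq,\textbf{0}_L)$; the dual argument handles the meet-closure hypothesis. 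Then the bounds $\textbf{0}_L,\mrep_L$ are identified exactly as in part (i).

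Part (iii) is then immediate: if $L$ is a finite distributive lattice, Lemma \ref{lem:40a} gives $d\vee d'\in\DO(L)$ for all $d,d'\in\DO(L)$, so the hypothesis of part (ii) is met and $(\DO(L),\preceq,\textbf{0}_L,\mrep_L)$ is a lattice.

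I do not anticipate a serious obstacle here; the content of the theorem is really packaged in Lemma \ref{lem:701} (the ambient lattice structure on $\opset(L)$) and Lemma \ref{lem:40a} (join-closure in the distributive case), both already available. The only mild subtlety is the ``finite + one operation suffices'' step in part (ii): one must be slightly careful that the recovered meet $\bigvee\{e\mid e\preceq d, e\preceq d'\}$ genuinely lies in $\DO(L)$ — this is where finiteness is used, reducing an arbitrary join to a finite iterated binary join, each step of which stays in $\DO(L)$ by hypothesis — and that it really is the infimum of $d$ and $d'$ in $(\DO(L),\preceq)$, which is a routine check. It is worth remarking (as the subsequent sections presumably do) that without finiteness one would instead need completeness or infinite distributivity to run an analogous argument, which is why part (iii) is stated for \emph{finite} distributive lattices rather than all of them.
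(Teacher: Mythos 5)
Your proposal is correct and follows essentially the same route as the paper: part (i) is the sublattice observation via Lemma \ref{lem:701}, part (ii) uses finiteness to get all joins (or meets) and then recovers the other operation, and part (iii) combines Lemma \ref{lem:40a} with part (ii). The only difference is cosmetic: where you spell out the standard ``finite join-semilattice with a least element is a lattice'' argument by writing the meet as $\bigvee\{e\in\DO(L)\mid e\preceq d,\ e\preceq d'\}$, the paper simply cites \cite[Theorem I.4.2]{BS} for the same fact.
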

\begin{proof}
	\mnoindent
	\mref{it:4801}
	Assum that $d\vee d'$ and $d\wedge d'$ are in $\DO(L)$ for all $d, d'\in \DO(L)$. 
	Then $(\DO(L), \preceq)$ is a sublattice of the lattice $(\opset(L), \preceq)$ by Lemma \mref{lem:701}. Thus \mref{it:4801} holds.
	
	\mnoindent
	\mref{it:4802}
	Assume that $L$ is a finite lattice and $d\vee d'\in \DO(L)$ for all $d$ and $d'$ in $\DO(L)$.
	Since $\DO(L)$ is finite as a subset of the finite set $\opset(L)$, it follows that $\bigvee A:=\bigvee_{a\in A} a$ exists for every nonempty subset $A$ of $\DO(L)$. Noticing that $\bigvee \emptyset=\textbf{0}_L$, 
	 we have $(\DO(L),\preceq, \textbf{0}_L,\mrep_L)$ is a lattice by \cite[Theorem I 4.2]{BS}. The same argument applies if $d\wedge d'\in \DO(L)$ for all $d$ and $d'$ in $\DO(L)$.
	
	\mnoindent
	\mref{it:4803} follows immediately by Lemma \mref{lem:40a} and \mref{it:4802}.
\end{proof}

\begin{remark}
	Let $(L, \vee, \wedge,  0, 1)$ be a lattice. When $(\DO(L),\preceq, \textbf{0}_L,\mrep_L)$ is a lattice, it may not be a sublattice of
	$(\opset(L), \preceq)$. 
	
	For example, let $B_{8}=\{0, a, b, c, u, v, w, 1\}$ be the $8$-elements Boolean lattice in Example \mref{exm:000}.
	Then $(\DO(B_{8}),\preceq, \textbf{0}_{B_{8}},\mrep_{B_{8}})$ is a lattice by
	Theorem \mref{pro:480} \mref{it:4803}, since $B_{8}$ is a finite distributive lattice. But 
	$\lambda^{(u)}, \lambda^{(v)}\in \DO(B_{8})$ and
	$\lambda^{(u)}\wedge \lambda^{(v)}=\lambda^{(a)}\not\in \DO(B_{8})$ by Remark \mref{remark:41}. So $(\DO(B_{8}),\preceq, \textbf{0}_{B_{8}},\mrep_{B_{8}})$ is not a sublattice of $(\opset(B_{8}),\preceq)$.
\end{remark}

Recall that  a lattice is  \name{complete} if, for every subset $A$ of $L$, both $\bigvee A:=\bigvee_{a\in A} a$ and $\bigwedge A:=\bigwedge_{a\in A}a$ exist in $L$.
In a complete lattice $L$, there are two \name{infinite distributive laws} to consider, namely
\begin{equation}
	x\wedge \bigvee_{\alpha \in \Omega}y_{\alpha}=\bigvee_{\alpha \in \Omega}(x\wedge y_{\alpha}) ~~ and
	\mlabel{eq:301}
\end{equation}
\begin{equation}
	x\vee \bigwedge_{\alpha \in \Omega}y_{\alpha}=\bigwedge_{\alpha \in \Omega}(x\vee y_{\alpha})
	\mlabel{eq:302}
\end{equation}
for any $x,  y_{\alpha}\in L$ and any index set $\Omega$.
Unlike ordinary distributivity which is self-dual, these laws do not imply each other in general
\mcite{bly}.

Let $\{d_{i}\}_{i\in \Omega}$ be a family of operators on a complete lattice $L$. Define  operators  $\bigvee_{i\in \Omega}d_{i}$,
$\bigcup_{i\in \Omega}d_{i}$ and $\bigwedge_{i\in \Omega}d_{i}$ on $L$, respectively, by
$$\Big(\bigvee_{i\in \Omega}d_{i}\Big)(x):=\bigvee_{i\in \Omega}d_{i}(x), \
\Big(\bigcup_{i\in \Omega}d_{i}\Big)(x):=x\wedge(\bigvee_{i\in \Omega}d_{i}(1)), \
\Big(\bigwedge_{i\in \Omega}d_{i}\Big)(x):=\bigwedge_{i\in \Omega}d_{i}(x)$$
for any $x\in L$.

\begin{lemma}
	Let $(L, \vee, \wedge,  0, 1)$ be a complete lattice which satisfies the  infinite distributive law \meqref{eq:301}.
	\begin{enumerate}
		\item $\bigvee_{i\in \Omega}d_{i}\in \DO(L)$ for any family $\{d_{i}\}_{i\in \Omega}$ of  derivations on
		$L$.
		\mlabel{it:4111}
		\item  $\bigvee_{i\in \Omega}d_{i}=\bigcup_{i\in \Omega}d_{i}$ for any family $\{d_{i}\}_{i\in \Omega}$ of  isotone derivations on $L$.
		\mlabel{it:4112}
	\end{enumerate}
	\mlabel{lem:411}
\end{lemma}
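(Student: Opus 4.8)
The plan is to treat part~\mref{it:4111} as the infinite analogue of Lemma~\mref{lem:40a}, and part~\mref{it:4112} as an immediate consequence of Proposition~\mref{the:000} together with the distributive law \meqref{eq:301}. First I would set $d:=\bigvee_{i\in\Omega}d_i$ and fix $x,y\in L$. Starting from the fact that each $d_i$ satisfies the Leibniz rule, I would write
$$d(x\wedge y)=\bigvee_{i\in\Omega}d_i(x\wedge y)=\bigvee_{i\in\Omega}\big((d_i(x)\wedge y)\vee(x\wedge d_i(y))\big).$$
Then, using the standard fact that in a complete lattice the join over $\Omega$ of the elements $a_i\vee b_i$ equals $\big(\bigvee_i a_i\big)\vee\big(\bigvee_i b_i\big)$ (both being the supremum of $\{a_i\}_{i\in\Omega}\cup\{b_i\}_{i\in\Omega}$), this last expression splits as
$$\Big(\bigvee_{i\in\Omega}(d_i(x)\wedge y)\Big)\vee\Big(\bigvee_{i\in\Omega}(x\wedge d_i(y))\Big).$$
Applying the infinite distributive law \meqref{eq:301} to each bracket gives $\bigvee_i(d_i(x)\wedge y)=\big(\bigvee_i d_i(x)\big)\wedge y=d(x)\wedge y$ and likewise $\bigvee_i(x\wedge d_i(y))=x\wedge d(y)$, whence $d(x\wedge y)=(d(x)\wedge y)\vee(x\wedge d(y))$ and $d\in\DO(L)$.

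For part~\mref{it:4112}, assuming each $d_i$ is isotone, I would invoke the equivalence of statements \mref{it:10001} and \mref{it:10003} in Proposition~\mref{the:000} to get $d_i(x)=x\wedge d_i(1)$ for all $x\in L$. Then for every $x\in L$,
$$\Big(\bigvee_{i\in\Omega}d_i\Big)(x)=\bigvee_{i\in\Omega}(x\wedge d_i(1))=x\wedge\Big(\bigvee_{i\in\Omega}d_i(1)\Big)=\Big(\bigcup_{i\in\Omega}d_i\Big)(x),$$
where the middle step is again exactly \meqref{eq:301}. This yields $\bigvee_{i\in\Omega}d_i=\bigcup_{i\in\Omega}d_i$.

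The only point requiring care is the regrouping of the arbitrary join in part~\mref{it:4111} into two sub-joins; this is a routine property of complete lattices, and once it is available the argument reduces to two applications of \meqref{eq:301}. I expect no genuine obstacle: in particular the second infinite distributive law \meqref{eq:302} plays no role here, and no order-theoretic hypothesis beyond completeness and \meqref{eq:301} is needed.
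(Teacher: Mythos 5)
Your proposal is correct and follows essentially the same route as the paper: for part \mref{it:4111} the paper likewise expands $\bigvee_{i\in\Omega}d_i(x\wedge y)$ via the Leibniz rule, regroups the join into two sub-joins, and applies the infinite distributive law \meqref{eq:301} twice; for part \mref{it:4112} it likewise invokes Proposition~\mref{the:000} to write $d_i(x)=x\wedge d_i(1)$ and then applies \meqref{eq:301} once. There is nothing to add.
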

\begin{proof}
	Let $(L, \vee, \wedge,  0, 1)$ be a complete lattice which satisfies the  infinite distributive law (\mref{eq:301}).
	
	\mnoindent
	\mref{it:4111} Assume that  $\{d_{i}\}_{i\in \Omega}$ is a family of  derivations on
	$L$. For any $x, y\in L$,
	we have
	\begin{eqnarray*}
		\Big(\bigvee_{i\in \Omega}d_{i}\Big)(x\wedge y)&=& \bigvee_{i\in \Omega}d_{i}(x\wedge y)\\
		&=&\bigvee_{i\in \Omega}  ((d_{i}(x)\wedge y)\vee (x\wedge d_{i}(y)))\\
		&=& \Big(\bigvee_{i\in \Omega}  ((d_{i}(x)\wedge y))\Big)\vee \bigvee_{i\in \Omega}(x\wedge d_{i}(y))\\
		&=&  \Big((\bigvee_{i\in \Omega}  d_{i}(x))\wedge y)\Big)\vee \Big(x\wedge \bigvee_{i\in \Omega}d_{i}(y)\Big)\\
		&=& \Big(\big(\bigvee_{i\in \Omega}  d_{i}\big)(x)\wedge y\Big)\vee \Big(x\wedge \big(\bigvee_{i\in \Omega}d_{i}\big)(y)\Big).
	\end{eqnarray*}
	Thus  $\bigvee_{i\in \Omega}d_{i}\in \DO(L)$.
	
	\mnoindent
	\mref{it:4112} Assume that  $\{d_{i}\}_{i\in \Omega}$ is a family of isotone derivations on
	$L$. For any $x\in L$, we have $d_{i}(x)=x\wedge d_{i}(1)$ by Proposition \mref{the:000}, and so
	$$\Big(\bigcup_{i\in \Omega}d_{i}\Big)(x)=x\wedge\Big(\bigvee_{i\in \Omega}d_{i}(1)\Big)=
	\bigvee_{i\in \Omega}(x\wedge d_{i}(1))= \bigvee_{i\in \Omega}d_{i}(x)=\Big(\bigvee_{i\in \Omega}d_{i}\Big)(x).$$ 
Thus
	$\bigvee_{i\in \Omega}d_{i}=\bigcup_{i\in \Omega}d_{i}$.
\end{proof}

\begin{theorem}
	Let $(L, \vee, \wedge,  0, 1)$ be a complete lattice which satisfies the  infinite distributive law in \meqref{eq:301}.
	Then $(\DO(L), \preceq, \textbf{0}_{L},\mrep_{L})$
	is a complete lattice.
	\mlabel{p:401}
\end{theorem}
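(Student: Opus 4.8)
The plan is to show that $(\DO(L),\preceq,\textbf{0}_L,\mrep_L)$ is a complete lattice by exhibiting arbitrary joins inside $\DO(L)$, and then invoking the standard fact that a poset in which every subset has a least upper bound (and which has a bottom element) is automatically a complete lattice. Concretely, given any family $\{d_i\}_{i\in\Omega}\subseteq\DO(L)$, I would take $\bigvee_{i\in\Omega}d_i$, defined pointwise by $\big(\bigvee_{i\in\Omega}d_i\big)(x)=\bigvee_{i\in\Omega}d_i(x)$, which makes sense because $L$ is complete. By Lemma~\mref{lem:411}\mref{it:4111} this operator is again a derivation, so it lies in $\DO(L)$. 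It is clearly an upper bound for the family in the order $\preceq$ (pointwise $\leq$), and it is the least one: if $d'\in\DO(L)$ satisfies $d_i\preceq d'$ for all $i$, then $d_i(x)\leq d'(x)$ for every $x$, hence $\bigvee_i d_i(x)\leq d'(x)$, i.e. $\bigvee_i d_i\preceq d'$. Thus every subset of $\DO(L)$ has a least upper bound in $\DO(L)$.

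Next I would handle the empty family and the boundedness. The join of the empty family is the bottom element $\textbf{0}_L$, which is in $\DO(L)$, and the join of all of $\DO(L)$ is an element $\preceq\mrep_L$ (since every derivation satisfies $d(x)\leq x$ by Proposition~\mref{pro:201}\mref{it:2011}), while $\mrep_L\in\DO(L)$ is itself the top; so $\DO(L)$ has both a least and a greatest element. Having least upper bounds for all subsets together with a least element, $(\DO(L),\preceq)$ is a complete lattice by the standard criterion (for any $A\subseteq\DO(L)$, the meet $\bigwedge A$ can be recovered as $\bigvee\{d\in\DO(L)\mid d\preceq a \text{ for all } a\in A\}$, which is well-defined because the set of lower bounds is nonempty, containing $\textbf{0}_L$, and has a supremum). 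I would cite \cite[Theorem I.4.2]{BS} (already used in the proof of Theorem~\mref{pro:480}) for this criterion.

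The only real point requiring care — and hence the main (minor) obstacle — is that the meet operation in $\DO(L)$ constructed this way is \emph{not} in general the pointwise meet $\bigwedge_i d_i(x)$: the pointwise infimum of derivations need not be a derivation (the infinite distributive law \meqref{eq:302}, which is what would be needed for that, is not assumed, only \meqref{eq:301}). So I must be careful to define $\bigwedge A$ in $\DO(L)$ abstractly as the supremum of the lower bounds, exactly as one does when proving the general lattice-completion criterion, rather than pointwise. This mirrors the phenomenon already noted in the Remark following Theorem~\mref{pro:480}, that $(\DO(L),\preceq)$ need not be a sublattice of $(\opset(L),\preceq)$. No genuinely new difficulty arises beyond this; the substance is entirely contained in Lemma~\mref{lem:411}\mref{it:4111}, and the theorem is essentially a packaging of that lemma with the completion criterion.
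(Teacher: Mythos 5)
Your proposal is correct and follows essentially the same route as the paper: both establish that the pointwise join $\bigvee_{i\in\Omega}d_i$ of an arbitrary family of derivations is again a derivation (Lemma~\mref{lem:411}) and is the least upper bound in $(\DO(L),\preceq)$, then conclude completeness via \cite[Theorem I.4.2]{BS} after noting $\bigvee\emptyset=\textbf{0}_L$. Your added caution that the meet must be taken abstractly (as the join of lower bounds) rather than pointwise is a correct and worthwhile observation, but it does not change the substance of the argument.
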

\begin{proof}
	Let $\Omega$ be an index set and
	$\{d_{i}\}_{i\in \Omega}$ be a family of operators in $\DO(L)$.
	We shall show that  $\bigvee_{i\in \Omega}d_{i}$ is the least upper bound of $\{d_{i}\}_{i\in \Omega}$
	in the poset $(\DO(L), \preceq)$.
	
	Indeed, we have by Lemma \ref{lem:411} that
	$\bigvee_{i\in \Omega}d_{i}\in \DO(L)$. Also, for each $i\in \Omega$, we have
	$d_{i}(x)\leq \bigvee_{i\in \Omega}d_{i}(x)= (\bigvee_{i\in \Omega}d_{i})(x)$ for any $x\in L$ and so
	$d_{i}\preceq \bigvee_{i\in \Omega}d_{i}$. Thus  $\bigvee_{i\in \Omega}d_{i}$ is an upper bound of $\{d_{i}\}_{i\in \Omega}$.
	Finally, let
	$d'\in \DO(L)$ such that  $d_{i}\preceq d'$ for each $i\in \Omega$. Then  $d_{i}(x)\leq d'(x)$  for any $x\in L$, which implies that
	$(\bigvee_{i\in \Omega}d_{i})(x) =\bigvee_{i\in \Omega}d_{i}(x)\leq d'(x)$ and so $ \bigvee_{i\in \Omega}d_{i}\preceq d'$.
	Therefore we obtain that  $\bigvee_{i\in \Omega}d_{i}$ is
	the least upper bound of $\{d_{i}\}_{i\in \Omega}$
	in the poset $(\DO(L), \preceq)$.
	
	 Noting that $\bigvee \emptyset=\textbf{0}_{L}$, we get
	$(\DO(L), \preceq, \textbf{0}_{L},\mrep_{L})$
	is a complete lattice by \cite[Theorem I.4.2]{BS}.
\end{proof}

\subsection{Lattice structures on inner and other special derivations}

We next consider the lattice structure of inner derivations, leading to two realizations of any lattice as lattices of certain derivations. 
\begin{lemma} \mlabel{lem:301}
Let $(L, \vee, \wedge,  0, 1)$ be a  lattice. 
\begin{enumerate}
\item $d_{u}\cup d_{v}=d_{u\vee v}$ and $d_{u}\circ d_{v}=d_{u}\wedge d_{v}=d_{u\wedge v}$ for any $u, v\in L$.
\mlabel{it:1}
\item  \mlabel{it:2}
$d\cup d'$ and $d\wedge d'$ are in $\IDO(L)$ for any $d, d'\in \IDO(L)$.
\end{enumerate}
\end{lemma}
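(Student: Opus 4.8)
The plan is to prove Lemma~\mref{lem:301} in two parts, handling the computational identities in \mref{it:1} first and then deducing \mref{it:2} from \mref{it:1} together with Proposition~\mref{the:000}.

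\textbf{Part \mref{it:1}.} For any $u,v\in L$ and $x\in L$, I would simply unwind the definitions. For $d_u\cup d_v$, recall $(d\cup d')(x)=x\wedge(d(1)\vee d'(1))$, so $(d_u\cup d_v)(x)=x\wedge(d_u(1)\vee d_v(1))=x\wedge((1\wedge u)\vee(1\wedge v))=x\wedge(u\vee v)=d_{u\vee v}(x)$; hence $d_u\cup d_v=d_{u\vee v}$. For composition, $(d_u\circ d_v)(x)=d_u(d_v(x))=d_u(x\wedge v)=(x\wedge v)\wedge u=x\wedge(u\wedge v)=d_{u\wedge v}(x)$, so $d_u\circ d_v=d_{u\wedge v}$. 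For meet, $(d_u\wedge d_v)(x)=d_u(x)\wedge d_v(x)=(x\wedge u)\wedge(x\wedge v)=x\wedge(u\wedge v)=d_{u\wedge v}(x)$, so $d_u\wedge d_v=d_{u\wedge v}$. This is entirely routine; no lattice hypotheses beyond associativity/commutativity/idempotency of $\wedge$ and $\vee$ are used. I would write these as three short displayed computations.

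\textbf{Part \mref{it:2}.} Here I would invoke the equivalences of Proposition~\mref{the:000}: a derivation $d$ is isotone if and only if $d=d_{d(1)}$ is an inner derivation. So given $d,d'\in\IDO(L)$, write $d=d_u$ and $d'=d_v$ with $u=d(1)$, $v=d'(1)$. By Part \mref{it:1}, $d\cup d'=d_{u\vee v}$ and $d\wedge d'=d_{u\wedge v}$, both of which are inner derivations, hence isotone derivations again by Proposition~\mref{the:000}. That finishes \mref{it:2}.

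I do not anticipate a genuine obstacle here — the lemma is essentially bookkeeping. The only point requiring a moment's care is making sure the first part is stated for \emph{all} lattices (not just distributive ones), which it is, since none of the three identities needs distributivity; and making sure in \mref{it:2} that we are allowed to represent an arbitrary element of $\IDO(L)$ as $d_{d(1)}$, which is exactly the content of the implication \mref{it:10001}$\Rightarrow$\mref{it:10004} (via \mref{it:10003}) in Proposition~\mref{the:000}. If anything is slightly delicate, it is only remembering that $d\vee d'$ is \emph{not} asserted here (that can fail, cf. Remark~\mref{remark:41}), so the statement about $\cup$ rather than $\vee$ is the right one and no more is claimed.
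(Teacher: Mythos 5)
Your proposal is correct and follows essentially the same route as the paper: part (i) by the same three direct computations, and part (ii) by identifying isotone derivations with inner derivations via Proposition~\mref{the:000} and then applying part (i). The only difference is that you spell out part (ii), which the paper dispatches with ``follows immediately from (i) and Proposition~\mref{the:000}.''
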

\begin{proof}
\mnoindent
\mref{it:1}
Let $u, v\in L$.
For any $x\in L$, since $d_{u}(1)=1\wedge u=u$ and $d_{v}(1)=1\wedge v=v$, we have
 $$(d_{u}\cup d_{v})(x)=x\wedge (d_{u}(1)\vee d_{v}(1))=x\wedge (u\vee v)=d_{u\vee v}(x),$$
  $$(d_{u}\circ d_{v})(x)=d_{u}( d_{v}(x))=d_{u} (x\wedge v)=(x\wedge v)\wedge u=x\wedge (u\wedge v)=d_{u\wedge v}(x),$$
 $$(d_{u}\wedge d_{v})(x)=d_{u}(x)\wedge d_{v}(x)=(x\wedge u) \wedge (x\wedge v)=x\wedge (u\wedge v)=d_{u\wedge v}(x).$$
 Thus  $d_{u}\cup d_{v}=d_{u\vee v}$ and $d_{u}\circ d_{v}=d_{u}\wedge d_{v}=d_{u\wedge v}$.

\mnoindent
\mref{it:2} follows immediately from~\mref{it:1} and Proposition \mref{the:000}.
\end{proof}
Now we give our first realization of a lattice as a lattice of derivations. 
\begin{proposition}
If $(L, \vee, \wedge,  0, 1)$ is a  lattice,
then $(\IDO(L), \cup, \wedge, \textbf{0}_{L},\mrep_{L})$
 is a lattice isomorphic to $L$.
\mlabel{pp:40}
\end{proposition}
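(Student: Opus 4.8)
The plan is to exhibit an explicit isomorphism between $L$ and $\IDO(L)$ using the bijection already established in Corollary~\ref{cor:200}\ref{it:2001}, and then verify that it respects the lattice operations. Recall that $g:L\to \IDO(L)$, $g(u)=d_u$, is a bijection with inverse $f(d)=d(1)$. So the only thing left to prove is that $g$ (equivalently $f$) is a homomorphism of lattices, where $\IDO(L)$ carries the operations $\cup$ and $\wedge$. Concretely, I would show that for all $u,v\in L$,
\[ g(u\vee v)=g(u)\cup g(v) \quad\text{and}\quad g(u\wedge v)=g(u)\wedge g(v), \]
that is, $d_{u\vee v}=d_u\cup d_v$ and $d_{u\wedge v}=d_u\wedge d_v$.

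Both identities are exactly the content of Lemma~\ref{lem:301}\ref{it:1}, so this step is essentially immediate: $d_u\cup d_v=d_{u\vee v}$ and $d_u\wedge d_v=d_{u\wedge v}$ were computed there by a direct application of the definitions together with $d_u(1)=u$. I would also note, again via Lemma~\ref{lem:301}\ref{it:2}, that $\IDO(L)$ is genuinely closed under $\cup$ and $\wedge$, so that $(\IDO(L),\cup,\wedge)$ is a well-defined algebra of the right type; and that the bounds match up, since $f(\mathbf 0_L)=\mathbf 0_L(1)=0$ and $f(\mrep_L)=\mrep_L(1)=1$, while $\mathbf 0_L=d_0$ and $\mrep_L=d_1$ are indeed inner derivations. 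Once $g$ is a bijective homomorphism preserving both binary operations, it is a lattice isomorphism, and moreover this forces $(\IDO(L),\cup,\wedge,\mathbf 0_L,\mrep_L)$ to actually be a (bounded) lattice — the algebraic identities defining a lattice transfer across the bijection from $L$.

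There is really no hard part here; the proposition is a bookkeeping corollary of Corollary~\ref{cor:200} and Lemma~\ref{lem:301}. The one mild subtlety worth a sentence is that the operation denoted $\cup$ on $\IDO(L)$ is not the operation $\vee$ inherited from $\opset(L)$ — indeed $d_u\vee d_v$ need not even be a derivation, as Remark~\ref{remark:41} shows — so one must be careful to use $\cup$, defined by $(d\cup d')(x)=x\wedge(d(1)\vee d'(1))$, as the join in $\IDO(L)$. With that understood, the computation $(d_u\cup d_v)(x)=x\wedge(u\vee v)=d_{u\vee v}(x)$ closes the argument, and the meet side is the componentwise $\wedge$, which does restrict correctly to $\IDO(L)$ by Lemma~\ref{lem:301}\ref{it:2}. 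I would therefore write the proof as: invoke the bijection $f\colon\IDO(L)\to L$, $d\mapsto d(1)$, from Corollary~\ref{cor:200}\ref{it:2001}; apply Lemma~\ref{lem:301}\ref{it:1} to see it preserves (equivalently, its inverse preserves) $\cup\mapsto\vee$ and $\wedge\mapsto\wedge$; conclude it is a lattice isomorphism and that $\IDO(L)$ is a lattice.
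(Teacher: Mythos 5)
Your proof is correct, and it reaches the conclusion by a slightly different route from the paper's in the first half of the argument. The paper establishes that $(\IDO(L),\cup,\wedge)$ is a lattice \emph{order-theoretically}: after noting closure under $\cup$ and $\wedge$ (Lemma~\ref{lem:301}), it verifies directly that $d\wedge d'$ is the greatest lower bound and $d\cup d'$ the least upper bound of $d,d'$ in the poset $(\IDO(L),\preceq)$, the latter requiring a short computation with an arbitrary upper bound $d''$. You instead transport the equational lattice structure of $L$ across the bijection $u\mapsto d_u$: since every element of $\IDO(L)$ is some $d_u$ (Proposition~\ref{the:000}/Corollary~\ref{cor:200}) and $d_u\cup d_v=d_{u\vee v}$, $d_u\wedge d_v=d_{u\wedge v}$ (Lemma~\ref{lem:301}~\ref{it:1}), all lattice identities hold automatically in $(\IDO(L),\cup,\wedge)$ and the bijection is an isomorphism. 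This is shorter and fully proves the proposition as stated; your caution about $\cup$ versus the pointwise $\vee$ is exactly the right subtlety to flag. What the paper's longer argument buys is the additional fact that $\cup$ and $\wedge$ realize the join and meet \emph{for the order $\preceq$} inherited from $\opset(L)$ --- information your transport argument does not supply by itself (though it is easily checked, since $d_u\preceq d_v$ iff $u\leq v$) and which is relevant to the paper's surrounding study of the poset $(\DO(L),\preceq)$. The second half of your proof, checking that $f(d)=d(1)$ (equivalently $g(u)=d_u$) preserves the operations and the bounds, coincides with the paper's.
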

\begin{proof}
Let $d$ and $d'$ be in $\IDO(L)$. Then $d\cup d'$ and $ d\wedge d'$ are in $ \IDO(L)$
by Lemma \mref{lem:301}.
It follows by Lemma \mref{lem:701} that 
$ d\wedge d'$ is the greatest lower bound of
$d$ and $d'$ in the poset  $(\IDO(L), \preceq)$.

We claim that
$d\cup d'$ is the least upper bound  of
$d$ and $d'$ in the poset  $(\IDO(L), \preceq)$.
In fact, first we have $d\preceq d\cup d'$ and
$d'\preceq d\cup d'$,
since 
$$d(x)=x\wedge d(1)\leq x\wedge (d(1)\vee d'(1))=(d\cup d')(x)$$ and 
$$d'(x)=x\wedge d'(1)\leq x\wedge (d(1)\vee d'(1))=(d\cup d')(x)$$ 
for any $x\in L$. Second, let $d''\in \IDO(L)$ such that $d\preceq d''$ and  $d'\preceq d''$. Then $d(1)\vee d'(1)\leq d''(1)$ and so for any $x\in L$,
$$(d\cup d')(x)=x\wedge (d(1)\vee d'(1))\leq x\wedge d''(1)=d''(x)$$
 by Proposition \mref{the:000}. Thus $d\cup d'\preceq d''$ and hence $d\cup d'$ is the least upper bound  of
 $d$ and $d'$ in the poset  $(\IDO(L), \preceq)$.
Therefore $(\IDO(L), \cup, \wedge, \textbf{0}_{L},\mrep_{L})$ is a lattice.

Define a map $f: \IDO(L)\rightarrow L$ by $f(d)=d(1)$
for any $d\in \IDO(L)$. By Corollary \mref{cor:200}, $f$ is a bijection. Also,   it is clear that $f(\textbf{0}_{L})=\textbf{0}_{L}(1)=0$ and
$f(\mrep_{L})=\mrep_{L}(1)=1$.
By  Lemma \mref{lem:301}, we have
$f(d_{u}\cup d_{v})=f(d_{u\vee v})=u\vee v=f(d_{u})\vee f(d_{v})$  and
$f(d_{u}\wedge d_{v})=f(d_{u\wedge v})=u\wedge v=f(d_{u})\wedge f(d_{v})$.
Thus $f$ is a lattice isomorphism.
\end{proof}

We next consider the case when $L$ is a distributive lattice.

\begin{lemma}	\mlabel{lem:40}
	Let $(L, \vee, \wedge,  0, 1)$ be a distributive lattice. Then $d\cup d'=d\vee d'$ for any $d, d'\in \IDO(L)$.
\end{lemma}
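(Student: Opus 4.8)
The plan is to prove the identity $d \cup d' = d \vee d'$ pointwise, using the fact that both $d$ and $d'$ are isotone derivations and hence, by Proposition~\mref{the:000}, inner derivations of the explicit form $d(x) = x \wedge d(1)$ and $d'(x) = x \wedge d'(1)$ for all $x \in L$. With this description in hand, the statement reduces to the single distributivity identity $x \wedge (d(1) \vee d'(1)) = (x \wedge d(1)) \vee (x \wedge d'(1))$.

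Concretely, first I would fix $d, d' \in \IDO(L)$ and $x \in L$. By Proposition~\mref{the:000}, the equivalence of its conditions \mref{it:10001} and \mref{it:10003} gives $d(x) = x \wedge d(1)$ and $d'(x) = x \wedge d'(1)$. Next I would compute the right-hand side: by definition of the operation $\vee$ on $\opset(L)$, $(d \vee d')(x) = d(x) \vee d'(x) = (x \wedge d(1)) \vee (x \wedge d'(1))$. Then, using the distributive law of $L$ (distributing $x \wedge (-)$ over the join), this equals $x \wedge (d(1) \vee d'(1))$. Finally, by definition of the operation $\cup$ on $\opset(L)$, the latter is exactly $(d \cup d')(x)$. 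Since $x$ was arbitrary, $d \cup d' = d \vee d'$.

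There is essentially no obstacle here: the proof is a one-line invocation of distributivity once the inner-derivation form is substituted, and all the supporting facts (the formula $d(x) = x \wedge d(1)$ for isotone derivations, and the definitions of $\vee$ and $\cup$ on operators) are already available in the excerpt. The only mild care needed is to cite Proposition~\mref{the:000} for the passage from ``isotone derivation'' to the explicit meet-translation formula rather than assuming it, and to make sure the distributive step is stated in the form actually used (distributing meet over a binary join, which holds in any distributive lattice). I would present it as a short displayed computation chaining these four equalities together.
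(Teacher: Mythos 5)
Your proof is correct and is essentially identical to the paper's: both substitute the meet-translation form $d(x)=x\wedge d(1)$ from Proposition~\mref{the:000} and then apply distributivity to chain $(d\vee d')(x)=(x\wedge d(1))\vee(x\wedge d'(1))=x\wedge(d(1)\vee d'(1))=(d\cup d')(x)$. No differences worth noting.
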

\begin{proof} Assume that $(L, \vee, \wedge,  0, 1)$ is a distributive lattice.
For $d, d'\in \IDO(L)$ and $x\in L$, we have $d(x)=x\wedge d(1)$ and $d'(x)=x\wedge d'(1)$
	by Proposition \mref{the:000} and so
	$$(d\vee d')(x)=d(x)\vee d'(x)=(x\wedge d(1))\vee (x\wedge d'(1))=x\wedge (d(1)\vee d'(1))
	=(d\cup d')(x).$$ 
Thus $d\cup d'=d\vee d'$.
\end{proof}

Then from Proposition~\mref{pp:40} and Lemma \mref{lem:40}, we obtain

\begin{corollary} $($\cite[Theorem 3.15]{xin2} \cite[Theorem~3.29]{xin1}$)$
If $(L, \vee, \wedge,  0, 1)$ is a distributive lattice,
then  $(\IDO(L), \vee, \wedge, \textbf{0}_{L},\mrep_{L})$ is a distributive lattice isomorphic to $L$.
\end{corollary}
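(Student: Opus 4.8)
The plan is to assemble the corollary directly from the two results that precede it, so almost no new work is needed. First I would invoke Proposition~\mref{pp:40} to get that $(\IDO(L),\cup,\wedge,\textbf{0}_L,\mrep_L)$ is a lattice and that the map $f:\IDO(L)\to L$, $f(d)=d(1)$, is a lattice isomorphism. Then, since $L$ is assumed distributive, Lemma~\mref{lem:40} gives $d\cup d'=d\vee d'$ for all $d,d'\in\IDO(L)$, so the two join operations on $\IDO(L)$ coincide. Substituting $\vee$ for $\cup$ in the statement of Proposition~\mref{pp:40} immediately yields that $(\IDO(L),\vee,\wedge,\textbf{0}_L,\mrep_L)$ is a lattice isomorphic to $L$ via the same $f$.

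The only remaining point is the word \emph{distributive} in the conclusion. I would handle this by transport of structure: a lattice isomorphic to a distributive lattice is distributive, and $L$ is distributive by hypothesis, so $\IDO(L)\cong L$ forces $(\IDO(L),\vee,\wedge)$ to satisfy the distributive law as well. (Alternatively one could verify distributivity of $\IDO(L)$ by hand, expanding $(d_1\vee d_2)\wedge d_3$ pointwise using the now-available pointwise formulas $d_i(x)=x\wedge d_i(1)$ from Proposition~\mref{the:000} together with the distributivity of $L$, but routing it through the isomorphism is cleaner and shorter.)

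There is essentially no obstacle here; the substantive content already lives in Proposition~\mref{pp:40} (which establishes the lattice structure and the isomorphism in the general case) and Lemma~\mref{lem:40} (which collapses $\cup$ to $\vee$ in the distributive case). The proof is therefore a one-line deduction: combine the two cited facts and note that distributivity is an isomorphism invariant. I would present it in exactly that order—cite Proposition~\mref{pp:40}, cite Lemma~\mref{lem:40}, conclude—occupying no more than two or three sentences.
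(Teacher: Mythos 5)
Your proposal is correct and matches the paper exactly: the paper derives this corollary with no further argument, stating only that it follows from Proposition~\mref{pp:40} and Lemma~\mref{lem:40}, which is precisely your combination of the two cited facts plus the observation that distributivity transports along the isomorphism.
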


Let $\chi^{(L)}=\{\chi^{(u)}~|~u\in L\}$, where $\chi^{(u)}$ is defined in Proposition \mref{pro:000}. We will show that
$(\chi^{(L)}, \preceq)$ is also a lattice isomorphic to $L$.

\begin{lemma}
Let $(L, \vee, \wedge,  0, 1)$ be a  lattice and $u, v\in L$. 
\begin{enumerate}
\item $\chi^{(u)}\vee\chi^{(v)}=\chi^{(u\vee v)}$  and
$\chi^{(u)}\wedge \chi^{(v)}=\chi^{(u\wedge v)}$.
\mlabel{it:3331}
\item  $\chi^{(u)}=\chi^{(v)}$ if and only if $u=v$.
\mlabel{it:3332}
\item   $\chi^{(u)}\circ \chi^{(v)}=\chi^{(v)}$ if $ v\neq 1$.
\mlabel{it:3333}
\end{enumerate}
 \mlabel{lem:333}
\end{lemma}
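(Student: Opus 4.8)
The plan is to verify each of the three claims directly from the definition
$$\chi^{(u)}(x)=\begin{cases} u, & x=1;\\ x, & x\neq 1,\end{cases}$$
computing the relevant operators pointwise and distinguishing the two cases $x=1$ and $x\neq 1$. For part~\mref{it:3331}, I would evaluate $(\chi^{(u)}\vee\chi^{(v)})(x)=\chi^{(u)}(x)\vee\chi^{(v)}(x)$: when $x=1$ this is $u\vee v=\chi^{(u\vee v)}(1)$, and when $x\neq 1$ it is $x\vee x=x=\chi^{(u\vee v)}(x)$; the meet case is identical with $\vee$ replaced by $\wedge$. Note that, in contrast to $d\vee d'$ for inner derivations (Remark~\mref{remark:41}), the values being joined away from $1$ coincide, so no distributivity hypothesis on $L$ is needed.

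For part~\mref{it:3332}, the ``if'' direction is immediate, and for ``only if'' I would simply evaluate both operators at $1$: $\chi^{(u)}(1)=u$ and $\chi^{(v)}(1)=v$, so $\chi^{(u)}=\chi^{(v)}$ forces $u=v$. For part~\mref{it:3333}, assume $v\neq 1$ and compute $(\chi^{(u)}\circ\chi^{(v)})(x)=\chi^{(u)}(\chi^{(v)}(x))$: if $x=1$ then $\chi^{(v)}(1)=v\neq 1$, so $\chi^{(u)}(v)=v=\chi^{(v)}(1)$; if $x\neq 1$ then $\chi^{(v)}(x)=x\neq 1$, so $\chi^{(u)}(x)=x=\chi^{(v)}(x)$. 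In both cases the result is $\chi^{(v)}(x)$, proving $\chi^{(u)}\circ\chi^{(v)}=\chi^{(v)}$.

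None of these steps presents a genuine obstacle; the only point requiring care is the case split on whether the argument equals $1$, and in part~\mref{it:3333} the observation that $\chi^{(v)}(x)\neq 1$ for every $x$ precisely when $v\neq 1$ (this is why the hypothesis $v\neq 1$ is exactly what is needed and cannot be dropped — if $v=1$ then $\chi^{(v)}=\mrep_L$ and $\chi^{(u)}\circ\mrep_L=\chi^{(u)}$, not $\mrep_L$, unless $u=1$). These lemmas then feed into the subsequent proof that $(\chi^{(L)},\preceq)$ is a lattice isomorphic to $L$, with part~\mref{it:3331} supplying the lattice operations, part~\mref{it:3332} giving injectivity of $u\mapsto\chi^{(u)}$, and part~\mref{it:3333} presumably used to identify composition behavior within this family.
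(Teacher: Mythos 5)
Your proof is correct and follows essentially the same route as the paper's: a pointwise computation of each operator with a case split on whether the argument equals $1$, using $\chi^{(v)}(x)\neq 1$ (for $v\neq 1$) in part~(iii). The extra remarks on why no distributivity is needed in part~(i) and why the hypothesis $v\neq 1$ is sharp are accurate but not part of the paper's argument.
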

\begin{proof}
Let $L$ and $u, v\in L$ be as given.

\mnoindent
\mref{it:3331}
For any $x\in L$, we have
$$(\chi^{(u)}\vee\chi^{(v)})(x)=\chi^{(u)}(x)\vee\chi^{(v)}(x)=
    \begin{cases}
      u\vee v,  & \textrm{if}~ x=1; \\
      x,  & \textrm{otherwise}
    \end{cases}=\chi^{(u\vee v)}(x)$$
and
$$(\chi^{(u)}\wedge\chi^{(v)})(x)=\chi^{(u)}(x)\wedge\chi^{(v)}(x)=
    \begin{cases}
      u\wedge v,  & \textrm{if}~ x=1; \\
      x,  & \textrm{otherwise}
    \end{cases}=\chi^{(u\wedge v)}(x),$$
proving \mref{it:3331}.

\mnoindent
\mref{it:3332} It is clear that $u=v$ implies $\chi^{(u)}=\chi^{(v)}$. Conversely, if
$\chi^{(u)}=\chi^{(v)}$,  then $u=\chi^{(u)}(1)=\chi^{(v)}(1)=v$.

\mnoindent
\mref{it:3333} If $v\neq 1$, then for any $x\in L$, we have
$$(\chi^{(u)}\circ\chi^{(v)})(x)=\chi^{(u)}(\chi^{(v)}(x))=
    \begin{cases}
     \chi^{(u)}( v),  & \textrm{if}~ x=1; \\
     \chi^{(u)}( x),  & \textrm{otherwise}
    \end{cases}=
    \begin{cases}
     v,  & \textrm{if}~ x=1; \\
     x,  & \textrm{otherwise}
    \end{cases}=
    \chi^{( v)}(x),$$
proving \mref{it:3333}.
\end{proof}
Now we give our second realization of a lattice as a lattice of derivations. 
\begin{proposition}
If $(L, \vee, \wedge,  0, 1)$ is a  lattice,
then  $(\chi^{(L)}, \preceq)$ is a sublattice of
$(\opset(L), \preceq)$ that is isomorphic to $L$.
\mlabel{pp:440}
\end{proposition}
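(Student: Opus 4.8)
The plan is to show that the map $f\colon L\to\chi^{(L)}$, $u\mapsto\chi^{(u)}$, is a lattice isomorphism onto $(\chi^{(L)},\preceq)$, and that $\chi^{(L)}$ is closed under the operations $\vee$ and $\wedge$ of $\opset(L)$, so that it is a sublattice. First I would invoke Lemma~\mref{lem:333}\mref{it:3331}, which says $\chi^{(u)}\vee\chi^{(v)}=\chi^{(u\vee v)}$ and $\chi^{(u)}\wedge\chi^{(v)}=\chi^{(u\wedge v)}$; this immediately gives that $\chi^{(L)}$ is closed under the binary operations $\vee,\wedge$ of $\opset(L)$, hence is a sublattice of $(\opset(L),\preceq)$ by the definition of sublattice. (Here one should note that by Proposition~\mref{pro:000}\mref{it:0001} every $\chi^{(u)}$ is in fact in $\DO(L)$, though only membership in $\opset(L)$ is needed for the sublattice claim.)

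Next I would verify that $f$ is a bijection. Surjectivity onto $\chi^{(L)}$ is immediate from the definition of $\chi^{(L)}=\{\chi^{(u)}\mid u\in L\}$; injectivity is exactly Lemma~\mref{lem:333}\mref{it:3332}. Then, since the lattice operations on the sublattice $(\chi^{(L)},\preceq)$ are the restrictions of $\vee,\wedge$ from $\opset(L)$, Lemma~\mref{lem:333}\mref{it:3331} says precisely that $f(u\vee v)=\chi^{(u\vee v)}=\chi^{(u)}\vee\chi^{(v)}=f(u)\vee f(v)$ and likewise for $\wedge$. Hence $f$ is a lattice homomorphism, and being bijective, a lattice isomorphism $L\cong(\chi^{(L)},\preceq)$.

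I do not expect any serious obstacle here: all the computational content has already been isolated in Lemma~\mref{lem:333}, so the proof is essentially a bookkeeping exercise assembling the sublattice claim from part~\mref{it:3331} and the isomorphism claim from parts~\mref{it:3331} and~\mref{it:3332}. The only mild subtlety worth stating explicitly is that $(\chi^{(L)},\preceq)$ inherits the meet and join of $\opset(L)$ (so that "isomorphic to $L$" is with respect to those induced operations), which is exactly what "sublattice" means; one might also remark, in parallel with Proposition~\mref{pp:40}, that the lattice $(\chi^{(L)},\preceq)$ need not coincide with the lattice structure one would get by restricting other operations, but that point is not needed for the statement.

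\begin{proof}
By Lemma~\mref{lem:333}\mref{it:3331}, for any $u,v\in L$ we have $\chi^{(u)}\vee\chi^{(v)}=\chi^{(u\vee v)}\in\chi^{(L)}$ and $\chi^{(u)}\wedge\chi^{(v)}=\chi^{(u\wedge v)}\in\chi^{(L)}$. Hence $\chi^{(L)}$ is closed under the operations $\vee$ and $\wedge$ of $\opset(L)$, so $(\chi^{(L)},\preceq)$ is a sublattice of $(\opset(L),\preceq)$.

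Define $f\colon L\to\chi^{(L)}$ by $f(u)=\chi^{(u)}$ for any $u\in L$. By definition of $\chi^{(L)}$, the map $f$ is surjective, and by Lemma~\mref{lem:333}\mref{it:3332}, $f$ is injective. Moreover, using Lemma~\mref{lem:333}\mref{it:3331} and the fact that the lattice operations on the sublattice $(\chi^{(L)},\preceq)$ are the restrictions of those on $(\opset(L),\preceq)$, we get
\[ f(u\vee v)=\chi^{(u\vee v)}=\chi^{(u)}\vee\chi^{(v)}=f(u)\vee f(v), \quad f(u\wedge v)=\chi^{(u\wedge v)}=\chi^{(u)}\wedge\chi^{(v)}=f(u)\wedge f(v) \]
for any $u,v\in L$. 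Thus $f$ is a bijective lattice homomorphism, hence a lattice isomorphism. Therefore $(\chi^{(L)},\preceq)$ is a sublattice of $(\opset(L),\preceq)$ that is isomorphic to $L$.
\end{proof}
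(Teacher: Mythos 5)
Your proof is correct and follows essentially the same route as the paper: closure of $\chi^{(L)}$ under $\vee$ and $\wedge$ via Lemma~\mref{lem:333}\mref{it:3331} gives the sublattice claim, and the map $u\mapsto\chi^{(u)}$ is shown to be a bijective homomorphism using Lemma~\mref{lem:333}\mref{it:3331} and \mref{it:3332}. No gaps.
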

\begin{proof}
Assume that $(L, \vee, \wedge,  0, 1)$ is a  lattice and  $u, v\in L$. Then  
$\chi^{(u)}\vee\chi^{(v)}=\chi^{(u\vee v)}\in \chi^{(L)}$  and
$\chi^{(u)}\wedge \chi^{(v)}=\chi^{(u\wedge v)}\in \chi^{(L)}$ by Lemma \mref{lem:333}. Thus $(\chi^{(L)}, \preceq)$ is a sublattice of $(\opset(L), \preceq)$ by Lemma \mref{lem:701}.

Define a map $f: L\rightarrow \chi^{(L)}$ by $f(u)=\chi^{(u)}$
for any $u\in L$. By Lemma \mref{lem:333}, $f$ is an injective homomorphism. Also, it is clear that $f$ is surjective. Hence
 $f$ is a lattice isomorphism.
\end{proof}

\begin{proposition}
	Let $(L, \vee, \wedge,  0, 1)$ be a  lattice.
	If $(\DO(L), \vee, \wedge, \textbf{0}_{L},\mrep_{L})$
	is a distributive lattice,  then  $L$ is also distributive.
	\mlabel{p:4002}
\end{proposition}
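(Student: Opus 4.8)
The plan is to exploit the two realizations of $L$ already established in Propositions~\mref{pp:40} and \mref{pp:440}, which embed $L$ into $\DO(L)$ as a sublattice (or as a lattice isomorphic to a subposet that is closed under the relevant operations). The key point is that distributivity is inherited by sublattices, so if $\DO(L)$ is distributive under $\vee$ and $\wedge$, then any sublattice of it is distributive, and hence $L$ is distributive via the isomorphism.

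First I would recall that by Proposition~\mref{pp:440} the set $\chi^{(L)}=\{\chi^{(u)}\mid u\in L\}$, equipped with the operations $\vee$ and $\wedge$ of $\opset(L)$, is a sublattice of $(\opset(L),\preceq)$ and is isomorphic to $L$. By Lemma~\mref{lem:333}\mref{it:3331}, for $u,v\in L$ we have $\chi^{(u)}\vee\chi^{(v)}=\chi^{(u\vee v)}$ and $\chi^{(u)}\wedge\chi^{(v)}=\chi^{(u\wedge v)}$, so $\chi^{(L)}$ is closed under the operations $\vee,\wedge$ on $\opset(L)$, and each $\chi^{(u)}$ lies in $\DO(L)$ by Proposition~\mref{pro:000}\mref{it:0001}. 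Hence $\chi^{(L)}\subseteq \DO(L)$ and $\chi^{(L)}$ is a sublattice of $(\DO(L),\vee,\wedge)$. Assuming $(\DO(L),\vee,\wedge,\textbf{0}_L,\mrep_L)$ is distributive, its sublattice $(\chi^{(L)},\vee,\wedge)$ is distributive as well, since distributivity is an identity and is therefore preserved under passing to subalgebras. Finally, the isomorphism $f:L\to\chi^{(L)}$, $u\mapsto\chi^{(u)}$, transports distributivity back to $L$, so $L$ is distributive.

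The main technical point to check — and the only place where care is needed — is that the operations making $\chi^{(L)}$ a lattice are genuinely the restrictions of $\vee$ and $\wedge$ on $\DO(L)$, not some other pair of operations; this is exactly what Lemma~\mref{lem:333}\mref{it:3331} supplies, so there is no real obstacle. One could alternatively run the argument through $(\IDO(L),\cup,\wedge)$ from Proposition~\mref{pp:40}, but there the join $\cup$ is \emph{a priori} different from the ambient $\vee$ of $\DO(L)$ (they agree only when $L$ is distributive, by Lemma~\mref{lem:40}), so using $\chi^{(L)}$ is cleaner: its join and meet are literally the pointwise $\vee$ and $\wedge$. Thus the proof is short: cite Proposition~\mref{pp:440} and Lemma~\mref{lem:333} to see $\chi^{(L)}$ is a sublattice of $(\DO(L),\vee,\wedge)$ isomorphic to $L$, invoke heredity of distributivity for sublattices, and conclude.
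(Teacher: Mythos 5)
Your proof is correct and follows essentially the same route as the paper's: both pass through Proposition~\mref{pp:440} and Lemma~\mref{lem:333} to realize $L$ as the sublattice $\chi^{(L)}$ of $(\DO(L),\vee,\wedge)$ and then use heredity of distributivity for sublattices. Your explicit check that the join and meet on $\chi^{(L)}$ are the ambient pointwise operations (rather than, say, the $\cup$ of $\IDO(L)$) is a worthwhile clarification of the point the paper leaves implicit.
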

\begin{proof} Assume that $(\DO(L), \vee, \wedge, \textbf{0}_{L},\mrep_{L})$
	is a distributive lattice. Then 
	$(\chi^{(L)}, \preceq)$ is a sublattice of
	$(\DO(L), \vee, \wedge, \textbf{0}_{L},\mrep_{L})$ and  
	$(\chi^{(L)}, \preceq)$
	is  isomorphic to $L$ by Proposition \mref{pp:440}.
	It follows that
	$(\chi^{(L)}, \preceq)$ is a distributive lattice and hence $L$ is distributive.
\end{proof}

A nonempty subset $F$ of a lattice $L$ is called a \name{filter} \mcite{bly} of $L$ if it satisfies:
$(i)$  $a, b\in F$ implies $a\wedge b\in F$ and
$(ii)$ $a\in F$, $c\in L$ and $a\leq c$ imply $c\in F$.

\begin{proposition}
	Let $(L, \vee, \wedge,  0, 1)$ be a  lattice.
	If $(\DO(L), \vee, \wedge, \textbf{0}_{L},\mrep_{L})$
	is a lattice,  then  $\chi^{(L)}$ is a filter of the lattice $\DO(L)$.
	\mlabel{p:4003}
\end{proposition}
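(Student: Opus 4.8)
The plan is to show directly that $\chi^{(L)}$, which by Proposition~\mref{pp:440} is a sublattice of $\opset(L)$ isomorphic to $L$, satisfies the two filter axioms \emph{inside} the ambient lattice $(\DO(L),\vee,\wedge,\textbf{0}_L,\mrep_L)$. The first axiom is essentially free: if $\chi^{(u)},\chi^{(v)}\in\chi^{(L)}$, then by Lemma~\mref{lem:333}\mref{it:3331} we have $\chi^{(u)}\wedge\chi^{(v)}=\chi^{(u\wedge v)}\in\chi^{(L)}$, and since the meet in the sublattice $\chi^{(L)}$ agrees with the meet computed pointwise (i.e. the $\wedge$ of $\opset(L)$, hence of $\DO(L)$ whenever the latter is a lattice and $\chi^{(L)}$ is a sublattice of it), the meet taken in $\DO(L)$ also lands in $\chi^{(L)}$. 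So closure under meet holds.

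The real content is the upward-closure axiom: if $\chi^{(u)}\in\chi^{(L)}$, $e\in\DO(L)$ and $\chi^{(u)}\preceq e$, then $e\in\chi^{(L)}$. First I would use $\chi^{(u)}\preceq e$ to pin down $e$ on all of $L$. For any $x\neq 1$ we have $\chi^{(u)}(x)=x$, so $x\le e(x)$; combined with $e(x)\le x$ from Proposition~\mref{pro:201}\mref{it:2011}, this forces $e(x)=x$ for every $x\in L\setminus\{1\}$. Thus $e$ agrees with $\mrep_L$ everywhere except possibly at $1$, and $e(1)$ is some element with $u=\chi^{(u)}(1)\le e(1)\le 1$. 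Hence $e$ is exactly the operator that sends every $x\neq 1$ to $x$ and sends $1$ to $w:=e(1)$ — and that is precisely $\chi^{(w)}$ by the definition of $\chi^{(w)}$ in Proposition~\mref{pro:000}\mref{it:0001} (with $w\le 1$ automatically, so there is no constraint beyond $w\in L$). Therefore $e=\chi^{(w)}\in\chi^{(L)}$, establishing upward closure. (One should also note $\chi^{(L)}\neq\emptyset$, e.g. $\mrep_L=\chi^{(1)}\in\chi^{(L)}$, so $\chi^{(L)}$ is a genuine filter.)

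I expect the only subtlety — not really an obstacle — to be bookkeeping about which lattice operations are being used where: the statement presupposes $\DO(L)$ is a lattice under $\preceq$, and one must make sure that "$\chi^{(L)}$ is a filter of $\DO(L)$" is read with the order $\preceq$ restricted from $\DO(L)$, so that the meet axiom is checked against the meet of $\DO(L)$. Since $\chi^{(L)}\subseteq\DO(L)\subseteq\opset(L)$ and $\chi^{(L)}$ is a sublattice of $\opset(L)$ (Proposition~\mref{pp:440}), the binary meet of two elements of $\chi^{(L)}$ is the same whether computed in $\opset(L)$ or in $\DO(L)$, so the first axiom is unambiguous. The upward-closure argument only uses the pointwise order and Proposition~\mref{pro:201}\mref{it:2011}, so no distributivity or finiteness hypotheses on $L$ are needed — the conclusion is conditional purely on $\DO(L)$ being a lattice, exactly as stated.
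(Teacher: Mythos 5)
Your proposal is correct and follows essentially the same route as the paper: closure under meet via Lemma~\mref{lem:333}, and upward closure by noting that $\chi^{(u)}\preceq e$ together with $e(x)\leq x$ (Proposition~\mref{pro:201}) forces $e$ to fix every $x\neq 1$, so $e=\chi^{(e(1))}$. The extra bookkeeping you do about which meet is being used is harmless but unnecessary, since the hypothesis already declares $(\DO(L),\vee,\wedge,\textbf{0}_L,\mrep_L)$ a lattice under the pointwise operations.
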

\begin{proof} Assume that $(\DO(L), \vee, \wedge, \textbf{0}_{L},\mrep_{L})$
	is a  lattice. It is clear that
	$\chi^{(L)}$ is a nonempty subset of $\DO(L)$. Also, by Lemma \mref{lem:333}, $\chi^{(L)}$ is closed under meet $\wedge$.
	
	Finally, assume that $d\in \DO(L)$ such that $\chi^{(u)}\preceq d$ for some $u\in L$. Then $L\backslash\{1\}\subseteq \Fix_{d}(L)$. In fact, for any
	$x\in L\backslash \{1\}$, we have $x=\chi^{(u)}(x)\leq d(x)$ and so $d(x)=x$, since $d(x)\leq x$  by Proposition \mref{pro:201}.
	It follows that $x\in \Fix_{d}(L)$ and hence  $L\backslash\{1\}\subseteq \Fix_{d}(L)$. Consequently, we have
	$d\in \chi^{(L)}$. Therefore $\chi^{(L)}$ is a filter of the lattice $\DO(L)$.
\end{proof}

\subsection{Lattice structures for derivations on specific lattices}
We now show that derivations on some concrete lattices form lattices. 
From Theorem~\mref{pro:480}, we know that
$(\DO(L), \preceq, \textbf{0}_{L},\mrep_{L})$
is a  lattice if $L$ is a finite chain or $L=M_{4}$. Next, we will show that
$(\DO(L), \preceq, \textbf{0}_{L},\mrep_{L})$
is a sublattice of 	$(\opset(L), \preceq)$ if $L$ is a finite chain or $L=M_{4}$, and prove that
$(\DO(M_{n}), \preceq, \textbf{0}_{L},\mrep_{L})$
is also a  lattice when $n\geq 5$.

\begin{lemma}
	Let  $(L, \vee, \wedge,  0, 1)$ be a  lattice
	and $d, d'\in \DO(L)$. Then
	$\Fix_{d}(L)\cap \Fix_{d'}(L)= \Fix_{d\wedge d'}(L)$.
	\mlabel{lem:500}
\end{lemma}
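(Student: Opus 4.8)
The plan is to prove the set equality $\Fix_{d}(L)\cap \Fix_{d'}(L)= \Fix_{d\wedge d'}(L)$ by showing the two inclusions. Recall that by definition $(d\wedge d')(x)=d(x)\wedge d'(x)$ for all $x\in L$, and by Proposition~\mref{pro:201}\mref{it:2011} we have $d(x)\leq x$ and $d'(x)\leq x$ for every $x\in L$, hence also $(d\wedge d')(x)\leq x$.

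For the inclusion $\Fix_{d}(L)\cap \Fix_{d'}(L)\subseteq \Fix_{d\wedge d'}(L)$, I would take $x\in \Fix_{d}(L)\cap \Fix_{d'}(L)$, so $d(x)=x$ and $d'(x)=x$; then $(d\wedge d')(x)=d(x)\wedge d'(x)=x\wedge x=x$, so $x\in \Fix_{d\wedge d'}(L)$. This direction is immediate and uses no hypothesis beyond the definitions.

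For the reverse inclusion $\Fix_{d\wedge d'}(L)\subseteq \Fix_{d}(L)\cap \Fix_{d'}(L)$, suppose $x\in \Fix_{d\wedge d'}(L)$, that is $d(x)\wedge d'(x)=x$. Since $d(x)\wedge d'(x)\leq d(x)\leq x$ and the two ends coincide, we force $d(x)=x$; symmetrically $d'(x)=x$. Hence $x\in \Fix_{d}(L)\cap \Fix_{d'}(L)$.

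I do not anticipate a real obstacle here: the statement is a short consequence of the facts that $d$ and $d'$ are contractive (Proposition~\mref{pro:201}\mref{it:2011}) and that in any lattice $a\wedge b\leq a$, together with the elementary observation that if $a\wedge b = x$ with $a\leq x$ then $a=x$. The only point to be careful about is that the lemma does \emph{not} require $d\wedge d'$ itself to be a derivation — it is merely an operator on $L$ — so the argument must rely only on Proposition~\mref{pro:201}\mref{it:2011} applied separately to $d$ and to $d'$, not on any Leibniz-type identity for $d\wedge d'$.
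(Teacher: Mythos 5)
Your proof is correct and follows essentially the same route as the paper's: both directions are handled by the same elementary argument, with the reverse inclusion resting on $x=d(x)\wedge d'(x)\leq d(x)\leq x$ forcing $d(x)=x$ (and likewise for $d'$). Your closing remark that the lemma needs no Leibniz identity for $d\wedge d'$ is a correct and worthwhile observation, matching the paper's implicit use of only Proposition~\mref{pro:201}\mref{it:2011}.
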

\begin{proof}
	Assume that $d, d'\in \DO(L)$.
	If $x\in \Fix_{d}(L)\cap \Fix_{d'}(L)$, then
	$d(x)=d'(x)=x$ and so
	$(d\wedge d')(x)=d(x)\wedge d'(x)=x$, i.e, $x\in \Fix_{d\wedge d'}(L)$. Thus
		$\Fix_{d}(L)\cap \Fix_{d'}(L)\subseteq \Fix_{d\wedge d'}(L)$.
	
Conversely, if $x\in \Fix_{d\wedge d'}(L)$, then $x=(d\wedge d')(x)=d(x)\wedge d'(x)$ and so $d(x)=x=d'(x)$ since $d(x)\leq x$ and $d'(x)\leq x$. Thus $x\in \Fix_{d}(L)\cap \Fix_{d'}(L)$. Therefore we get
	$\Fix_{d}(L)\cap \Fix_{d'}(L)= \Fix_{d\wedge d'}(L)$.
\end{proof}

\begin{lemma}
	Let  $(L, \vee, \wedge,  0, 1)$ be a finite  chain and $d, d'\in \DO(L)$. Then
	$d\wedge d'$ is in $\DO(L)$.
	\mlabel{lem:501}
\end{lemma}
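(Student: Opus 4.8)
The plan is to reduce everything to the characterization of derivations on a chain in Lemma~\mref{lem:0001}. Since $L$ is finite, the maxima $u:=\max_{x\in L}\{d(x)\}$ and $u':=\max_{x\in L}\{d'(x)\}$ both exist, and as $L$ is a chain they are comparable; by the symmetry of $d$ and $d'$ I may assume $u'\leq u$. Applying Lemma~\mref{lem:0001} to $d$ gives $d(x)=x$ for all $x\leq u$ together with order-reversal of $d$ on $\{x\in L\mid x>u\}$, and likewise $d'(x)=x$ for all $x\leq u'$ with order-reversal of $d'$ on $\{x\in L\mid x>u'\}$. Write $m:=d\wedge d'$, so $m(x)=d(x)\wedge d'(x)$ for all $x\in L$; the goal is to check that $m$ satisfies conditions~\mref{it:00011} and \mref{it:00012} of Lemma~\mref{lem:0001}.

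First I would identify $\max_{x\in L}\{m(x)\}$. For $x\leq u'$ one has $d(x)=d'(x)=x$ (using $u'\leq u$), hence $m(x)=x$; in particular $m(u')=u'$ and condition~\mref{it:00011} holds for $m$ with the value $u'$. For $x>u'$ one has $d'(x)\leq u'$, hence $m(x)\leq d'(x)\leq u'$. Thus $\max_{x\in L}\{m(x)\}=u'$, and it remains only to verify condition~\mref{it:00012} for $m$ with respect to this value.

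So the core step is: for $v,w\in L$ with $u'<w\leq v$, show $m(v)\leq m(w)$. The key observation is that both $v$ and $w$ lie in $\{x\mid x>u'\}$, where $d'$ is order-reversing, so $d'(v)\leq d'(w)$. I would then split into two cases. If $w>u$ then also $v>u$, so $d(v)\leq d(w)$ by order-reversal of $d$, and therefore $m(v)=d(v)\wedge d'(v)\leq d(w)\wedge d'(w)=m(w)$. If $w\leq u$, then $d(w)=w$ and, since $d'(w)\leq u'<w$, we get $m(w)=w\wedge d'(w)=d'(w)$; combined with $m(v)\leq d'(v)\leq d'(w)$ this gives $m(v)\leq m(w)$. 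In either case condition~\mref{it:00012} holds, so Lemma~\mref{lem:0001} yields $m=d\wedge d'\in\DO(L)$.

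I expect the only delicate point to be the mixed situation $u'<w\leq u<v$: there $d$ acts as the identity at $w$ while being far below $u'$ at $v$, so one should not try to compare $d(v)$ with $d(w)$ directly — it suffices to bound $m(v)$ above by $d'(v)$ and exploit order-reversal of $d'$ over the entire range $x>u'$. No auxiliary results beyond Lemma~\mref{lem:0001} (and basic monotonicity of $\wedge$ on a chain) are required.
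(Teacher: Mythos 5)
Your proof is correct and follows essentially the same route as the paper's: identify $\max_{x\in L}\{(d\wedge d')(x)\}$ as the smaller of the two maxima and verify the two conditions of Lemma~\mref{lem:0001} by splitting on whether $w$ lies above or below the larger maximum. The only cosmetic difference is that the paper computes this maximum via $\Fix_{d\wedge d'}(L)=\Fix_d(L)\cap\Fix_{d'}(L)$ (Lemma~\mref{lem:500} and Corollary~\mref{c:200}) rather than directly, and its roles of $u$ and $u'$ are swapped relative to yours.
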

\begin{proof}
	Assume that $(L, \vee, \wedge,  0, 1)$ is a finite  chain and $d, d'\in \DO(L)$. 
Let
$u= \max_{x\in L}\{(d\wedge d')(x)\}$. Then by Lemma \mref{lem:500} and Corollary \mref{c:200} we have 
\begin{eqnarray*}
	u&=& \max_{x\in L}\{\Fix_{d\wedge d'}(L)\}\\
	&=& \max_{x\in L}\{\Fix_{d}(L)\cap \Fix_{d'}(L)\}\\
	&=& \max_{x\in L}\{\Fix_{d}(L)\}\wedge \max_{x\in L}\{\Fix_{d'}(L)\}\\
	&=& \max_{x\in L}\{d(L)\}\wedge \max_{x\in L}\{d'(L)\}.
\end{eqnarray*} 
Without loss of generality, assume that $u=\max_{x\in L}\{d(x)\}\leq \max_{x\in L}\{d'(x)\}$, and let
$u'=\max_{x\in L}\{d'(x)\}$.

For any $x\leq u$, 
since $u= \max_{x\in L}\{d(x)\}$ and  $u\leq u'$,   we have $d(x)=d'(x)=x$ by Lemma \mref{lem:0001}, and so $(d\wedge d')(x)=d(x)\wedge d'(x)=x$.

For any $v, w\in L$ with $u< w\leq v$, we have  $d(v)\leq d(w)$ by Lemma  \mref{lem:0001}.

 If 
$w\leq u'$, then $d'(w)=w$ by Lemma  \mref{lem:0001}. It follows that $(d\wedge d')(v)=d(v)\wedge d'(v)\leq d(v)\leq d(w)=d(w)\wedge d'(w)=(d\wedge d')(w)$, since $d(w)\leq w$.
 
 If $u'<w$, then
$u'< w\leq v$, and so $d'(v)\leq d'(w)$  by Lemma  \mref{lem:0001}. It follows that $(d\wedge d')(v)=d(v)\wedge d'(v)\leq d(w)\wedge d'(w)=(d\wedge d')(w)$.
 
 Summarizing the above arguments, by Lemma \mref{lem:0001} we obtain $d\wedge d'\in \DO(L)$.
\end{proof}

\begin{lemma}
Let $n\geq 4$ and $d, d'\in \DO(M_{n})$. Then
	 $d\wedge d'$ is in $\DO(M_{n})$.
\mlabel{l:4000}
\end{lemma}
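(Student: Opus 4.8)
The plan is to use Lemma~\mref{lem:0100}, which characterizes derivations on $M_n$ that do not fix $1$, together with the fact (Proposition~\mref{pro:201} and Corollary~\mref{c:200}) that $d(x)\leq x$ and $\Fix_d(M_n)=d(M_n)$ for every derivation $d$. First I would dispose of the trivial cases: if $d=\mrep_{M_n}$ then $d\wedge d'=d'\in\DO(M_n)$, and similarly if $d'=\mrep_{M_n}$; so we may assume $1\not\in\Fix_d(M_n)$ and $1\not\in\Fix_{d'}(M_n)$. Set $e:=d\wedge d'$. Since $e(x)=d(x)\wedge d'(x)\leq x$ for all $x$, and by Lemma~\mref{lem:500} we have $\Fix_e(M_n)=\Fix_d(M_n)\cap\Fix_{d'}(M_n)$, which in particular does not contain $1$. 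So I would verify the two conditions of Lemma~\mref{lem:0100} for $e$.

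For condition~\mref{it:01002}: if $w\in M_n\backslash(\Fix_e(M_n)\cup\{1\})$, I want $e(w)=0$. Since the only proper elements comparable with $w$ are $0$ and $w$ itself (the diamond structure), and $e(w)\leq w$, either $e(w)=w$ — impossible since $w\not\in\Fix_e(M_n)$ — or $e(w)=0$. So condition~\mref{it:01002} holds essentially for free. For condition~\mref{it:01001}, I need $e(1)=d(1)\wedge d'(1)\in\Fix_e(M_n)=\Fix_d(M_n)\cap\Fix_{d'}(M_n)$. By Lemma~\mref{lem:0100} applied to $d$ and to $d'$ separately, $d(1)\in\Fix_d(M_n)$ and $d'(1)\in\Fix_{d'}(M_n)$. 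The task is then to show $d(1)\wedge d'(1)$ lies in \emph{both} fixed-point sets. Here the diamond structure is decisive: $d(1)\wedge d'(1)$ is either $0$ (if $d(1)\neq d'(1)$, since distinct proper atoms meet to $0$) or $d(1)=d'(1)$ (if they coincide). In the first case $e(1)=0\in\Fix_d(M_n)\cap\Fix_{d'}(M_n)$ trivially (both derivations fix $0$); in the second case $e(1)=d(1)=d'(1)$, which is in $\Fix_d(M_n)$ by Lemma~\mref{lem:0100}\mref{it:01001} and in $\Fix_{d'}(M_n)$ likewise.

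One subtlety I should handle: when I say "distinct proper elements meet to $0$," I am using that $d(1)$ and $d'(1)$ are either $0$, or $1$, or one of the atoms $b_i$. They cannot be $1$ since $1\not\in\Fix_d(M_n)$ forces $d(1)\neq 1$ (as $d(1)\in\Fix_d(M_n)$), and similarly for $d'$. If either is $0$ then $e(1)=0$ and we are done; otherwise both are atoms, and two atoms of $M_n$ meet to $0$ unless they are equal. So in every case $e(1)\in\Fix_d(M_n)\cap\Fix_{d'}(M_n)=\Fix_e(M_n)$, giving condition~\mref{it:01001}. With both conditions of Lemma~\mref{lem:0100} verified and $1\not\in\Fix_e(M_n)$, we conclude $e=d\wedge d'\in\DO(M_n)$. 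I do not expect a genuine obstacle here; the only care needed is the bookkeeping of the fixed-point sets and invoking Lemma~\mref{lem:0100} three times (for $d$, for $d'$, and for $e$) rather than trying to check the Leibniz rule directly.
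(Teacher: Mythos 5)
Your proposal is correct and follows essentially the same route as the paper: reduce to the case $1\notin\Fix_d(M_n)$ and $1\notin\Fix_{d'}(M_n)$, use Lemma~\mref{lem:500} to identify $\Fix_{d\wedge d'}(M_n)$ with $\Fix_d(M_n)\cap\Fix_{d'}(M_n)$, split on whether $d(1)$ and $d'(1)$ coincide, and verify the two conditions of Lemma~\mref{lem:0100}. The only (immaterial) difference is that for condition~\mref{it:01002} you argue directly from $e(w)\leq w$ and the atom structure of $M_n$, whereas the paper applies Lemma~\mref{lem:0100} to $d$ or $d'$ to get $d(w)=0$ or $d'(w)=0$.
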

\begin{proof}
Assume that $n\geq 4$ and $d, d'\in \DO(M_{n})$.
Then $\Fix_{d}(M_{n})\cap \Fix_{d'}(M_{n})= \Fix_{d\wedge d'}(M_{n})$ by Lemma \mref{lem:500}. To prove that $d\wedge d'\in \DO(M_{n})$, consider the following two cases.

First assume  $1\in \Fix_{d}(M_{n})$ or $1\in \Fix_{d'}(M_{n})$. Then
		$d=\mrep_{M_{n}}$ or $d'=\mrep_{M_{n}}$ by Proposition \mref{pro:201} \mref{it:2014} and so $d\wedge d'=d'\in \DO(M_{n})$ or $d\wedge d'=d\in \DO(M_{n})$. 
		
Next assume $1\not\in \Fix_{d}(M_{n})$ and  $1\not\in \Fix_{d'}(M_{n})$.
		Then $d(1), d'(1)\in M_{n}\backslash \{1\}$. 
		If $d(1)=d'(1)$, then 
		$$(d\wedge d')(1)=d(1)\wedge d'(1)=d(1)=d'(1)\in \Fix_{d}(M_{n})\cap \Fix_{d'}(M_{n})= \Fix_{d\wedge d'}(M_{n}).$$
		If $d(1)\neq d'(1)$, then
$$(d\wedge d')(1)=d(1)\wedge d'(1)=0\in \Fix_{d}(M_{n})\cap \Fix_{d'}(M_{n})= \Fix_{d\wedge d'}(M_{n}).$$ 
	Also, for each $w\in M_n\backslash (\Fix_{d\wedge d'}(M_n)\cup\{1\})$, we have $w\in M_n\backslash (\Fix_{d}(M_n)\cup\{1\})$ or $w\in M_n\backslash (\Fix_{ d'}(M_n)\cup\{1\})$ and so	
		$d(w)=0$ or $d'(w)=0$ by Lemma \mref{lem:0100}. Thus $(d\wedge d')(w)=d(w)\wedge d'(w)=0$. Consequently, by Lemma \mref{lem:0100} we obtain   $d\wedge d'\in \DO(M_{n})$.
\end{proof}

\begin{proposition}
 $(\DO(M_{n}), \preceq, \textbf{0}_{M_{n}},\mrep_{M_{n}})$
	is  a  lattice	for any integer $n\geq 3$.
	\mlabel{p:4000}
\end{proposition}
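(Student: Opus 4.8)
The plan is to reduce the statement entirely to Theorem~\mref{pro:480}\mref{it:4802}, which says that for a \emph{finite} lattice $L$ it suffices to check that $d\wedge d'\in\DO(L)$ for all $d,d'\in\DO(L)$ in order to conclude that $(\DO(L),\preceq,\textbf{0}_L,\mrep_L)$ is a lattice. Since $M_{n}$ is finite for every $n\geq 3$, the only point that needs verification is that $\DO(M_{n})$ is closed under the meet operation $\wedge$ of $\opset(M_{n})$, and this closure has already been established.

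First I would dispose of the base case $n=3$. Here $M_{3}=\{0,b_{1},1\}$ is precisely the $3$-element chain, so Lemma~\mref{lem:501} (closure of $\DO$ under $\wedge$ for finite chains) gives $d\wedge d'\in\DO(M_{3})$ for all $d,d'\in\DO(M_{3})$, and then Theorem~\mref{pro:480}\mref{it:4802} applies. Alternatively, one may simply invoke the explicit determination of $\DO(M_{3})$ in Theorem~\mref{the:001}.

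For $n\geq 4$, the closure $d\wedge d'\in\DO(M_{n})$ for all $d,d'\in\DO(M_{n})$ is exactly the content of Lemma~\mref{l:4000}. Combining this with the finiteness of $M_{n}$ and Theorem~\mref{pro:480}\mref{it:4802} yields that $(\DO(M_{n}),\preceq,\textbf{0}_{M_{n}},\mrep_{M_{n}})$ is a lattice, which completes the argument.

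There is essentially no obstacle remaining at this stage: the genuine work has already been carried out in Lemmas~\mref{lem:0100}, \mref{lem:500}, \mref{lem:501} and \mref{l:4000}, through the characterization of derivations on $M_{n}$ by their fixed-point sets and the behaviour of $\Fix_{(-)}(M_{n})$ under $\wedge$. The only small care needed is to note that $M_{3}$ lies outside the range $n\geq 4$ covered by Lemma~\mref{l:4000} and so must be treated separately as a chain; once that is observed, the proposition is immediate.
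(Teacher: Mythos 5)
Your proof is correct and follows essentially the same route as the paper: the $n=3$ case is handled by observing that $M_{3}$ is a $3$-element chain (hence covered by Theorem~\ref{pro:480}), and for $n\geq 4$ the closure of $\DO(M_{n})$ under $\wedge$ from Lemma~\ref{l:4000} is combined with Theorem~\ref{pro:480}\ref{it:4802}. No gaps.
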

\begin{proof}
Since $M_{3}$ is a $3$-element chain, $(\DO(M_{3}), \preceq, \textbf{0}_{M_{3}},\mrep_{M_{3}})$
is  a  lattice by Theorem \mref{pro:480}.

Let $n\geq 4$ and $d, d'\in \DO(M_{n})$.
Then  $d\wedge d'\in \DO(M_{n})$ by Lemma \mref{l:4000}. It follows by Theorem \mref{pro:480} \mref{it:4802} that $(\DO(M_{n}), \preceq, \textbf{0}_{M_{n}},\mrep_{M_{n}})$
is  a  lattice.
\end{proof}

\begin{proposition}
	Let  $(L, \vee, \wedge,  0, 1)$ be a finite  chain or $L=M_{4}$. Then $(\DO(L), \preceq)$
	is a sublattice of 	$(\opset(L), \preceq)$. 		
	\mlabel{pro:502}
\end{proposition}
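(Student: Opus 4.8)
The plan is to show, for each of the two cases (a finite chain and $L = M_4$), that $\DO(L)$ is closed under both $\vee$ and $\wedge$ as operators in $\opset(L)$; once this is done, Lemma~\mref{lem:701} immediately gives that $(\DO(L),\preceq)$ is a sublattice of $(\opset(L),\preceq)$, since the meet and join computed inside $\DO(L)$ coincide with those in $\opset(L)$. Note that closure under $\vee$ alone is not enough for the sublattice claim — we genuinely need closure under both operations, which is why Lemma~\mref{lem:501} and Lemma~\mref{l:4000} (closure under $\wedge$) are the crucial inputs, together with a parallel closure-under-$\vee$ statement.

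First, for the finite chain case: closure under $\wedge$ is exactly Lemma~\mref{lem:501}. For closure under $\vee$, I would argue similarly using the characterization in Lemma~\mref{lem:0001}. Given $d, d' \in \DO(L)$ on a finite chain, set $u = \max_{x\in L}\{d(x)\}$ and $u' = \max_{x\in L}\{d'(x)\}$; without loss of generality $u \leq u'$, so $\max_{x\in L}\{(d\vee d')(x)\} = u'$. For $x \leq u'$ one checks $d(x) = x$ (using $x \leq u'$ and Lemma~\mref{lem:0001}\mref{it:00011} applied to $d'$, noting $d(x) \leq x$ forces... actually one needs $x \leq u$ to get $d(x) = x$), so the verification splits into $x \leq u$, $u < x \leq u'$, and $u' < x$; in each regime $(d\vee d')(x) = \max\{d(x), d'(x)\}$ is either $x$ or bounded by $u'$, and the antitonicity condition Lemma~\mref{lem:0001}\mref{it:00012} follows from that of $d$ and $d'$. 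Then $d \vee d' \in \DO(L)$ by Lemma~\mref{lem:0001}.

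Second, for $L = M_4$: closure under $\wedge$ is Lemma~\mref{l:4000} (with $n = 4$). For closure under $\vee$, since $M_4$ is a finite distributive lattice (it is the Boolean lattice $B_4$... no — $M_4$ has four atoms all below $1$, so it is \emph{not} distributive; however one can check $d \vee d' \in \DO(M_4)$ directly using Lemma~\mref{lem:0100}). Given $d, d' \in \DO(M_4)$ with neither fixing $1$, one has $d(1), d'(1) \in M_4 \setminus\{1\}$ and $d, d'$ vanish off their fixed-point sets; then $(d\vee d')(1) = d(1)\vee d'(1)$, which is either some atom (when one of them is $0$ or they agree) or $1$ (when they are distinct atoms). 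The case where $(d\vee d')(1) = 1$ needs separate attention: one must check $d\vee d'$ still satisfies the Leibniz rule, which it does since then $d\vee d' = \mrep_{M_4}$ is forced only if it also fixes every atom — otherwise one verifies \eqref{eq:der} directly on pairs $\{b_i, 1\}$ as in the proof of Lemma~\mref{lem:0100}. The main obstacle is precisely this sub-case analysis for $\vee$ on $M_4$, where $d\vee d'$ may send $1$ to $1$ without being the identity, so one cannot invoke Proposition~\mref{pro:201}\mref{it:2014}; instead one checks the Leibniz rule by hand using that all meets of distinct atoms are $0$ and $b_i \wedge 1 = b_i$. Finally, combining closure under $\vee$ and $\wedge$ with Lemma~\mref{lem:701} yields that $(\DO(L),\preceq)$ is a sublattice of $(\opset(L),\preceq)$ in both cases.
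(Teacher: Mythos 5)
Your overall skeleton matches the paper's: show $\DO(L)$ is closed under both $\vee$ and $\wedge$ and then invoke Lemma~\mref{lem:701}, with Lemmas~\mref{lem:501} and~\mref{l:4000} supplying closure under $\wedge$. The gap is in your treatment of closure under $\vee$. You assert that $M_4$ ``has four atoms all below $1$, so it is not distributive''; this is a miscount. In the paper's notation $M_n$ has $n$ elements in total, so $M_4=\{0,b_1,b_2,1\}$ has exactly two atoms and is the Boolean lattice $\mathbf{2}\times\mathbf{2}$, hence distributive (only $M_n$ with $n\geq 5$ fails distributivity). Since finite chains are also distributive, Lemma~\mref{lem:40a} gives $d\vee d'\in\DO(L)$ in both cases in one line, which is exactly how the paper proceeds. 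Your replacement arguments for $\vee$-closure are not actually carried through: the chain case visibly stalls mid-sentence (you need $x\leq u$, not $x\leq u'$, to conclude $d(x)=x$, and the antitonicity check for $d\vee d'$ on the ranges above $u$ and above $u'$ is never written down), and the $M_4$ case defers to ``one verifies \eqref{eq:der} directly'' without doing so. Incidentally, the sub-case you flag as the main obstacle cannot occur: if $(d\vee d')(1)=1$ with $d(1),d'(1)\neq 1$, then $d(1)$ and $d'(1)$ must be the two distinct atoms, each fixed by its own derivation by Lemma~\mref{lem:0100}, so $d\vee d'$ fixes $b_1$, $b_2$ and $1$ and is forced to be $\mrep_{M_4}$. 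In summary, the structure of your proof is right and the $\wedge$ half is correct, but the $\vee$ half should simply be replaced by the observation that both lattices are distributive together with Lemma~\mref{lem:40a}.
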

\begin{proof}
Assume that  $L$ is  a finite  chain or $L=M_{4}$. Then $L$ is a finite distributive lattie. It follows by Lemma \mref{lem:40a}, Theorem \mref{pro:480}, Lemma \mref{lem:501} and Lemma \mref{l:4000}	that $(\DO(L), \preceq)$
is a sublattice of 	$(\opset(L), \preceq)$. 		
\end{proof}

The following examples demonstrate a rich lattice structure on the set of lattice derivations.

For  a finite set $X=\{x_{1}, x_{2}, \cdots, x_{n}\}$ and a map $\varphi: X\rightarrow X$, we will write $\varphi$ as
$$ \left( \begin{matrix}
	x_{1}          & x_{2}            & \cdots & x_{n} \\
	\varphi(x_{1}) &   \varphi(x_{2}) & \cdots &   \varphi(x_{n})
\end{matrix}
\right)$$

\begin{example}
\begin{enumerate}
\item Let $C_{3}=\{0, u,  1\}$ be the $3$-element chain with $0< u< 1$. Then by Theorem \mref{the:001}, $|\DO(C_{3})|=4$ and
$\DO(C_{3})=\{\mathbf{0}_{C_{3}}, \mrep_{C_{3}}, \varphi_{1}, \varphi_{2}\}$, where
$$\varphi_{1}= \left( \begin{matrix}
        0     & u        & 1\\
        0     & u        & 0
          \end{matrix}
          \right), \quad
 \varphi_{2}= \left( \begin{matrix}
        0     & u        & 1\\
        0     & u     & u
          \end{matrix}
          \right).  $$
By Proposition \mref{pro:502}, $(\DO(C_{3}), \preceq, \textbf{0}_{C_{3}},\mrep_{C_{3}})$ is a sublattice of 	$(\opset(C_{3}), \preceq)$.  Moreover, it is easy to see that  $\DO(C_{3})$ is a $4$-element chain with $\mathbf{0}_{C_{3}}< \varphi_{1}<\varphi_{2}<\mrep_{C_{3}}$,
$\IDO(C_{3})=\{\mathbf{0}_{C_{3}}, \mrep_{C_{3}},  \varphi_{2}\}$, and $\chi^{(C_{3})}=\{\varphi_{1}, \varphi_{2}, \mrep_{C_{3}}\}$.
\mlabel{it:401}
\item   Let $C_{4}=\{0, u, v, 1\}$ be the $4$-element chain with $0< u<v< 1$. Then by Theorem \mref{the:001}, $|\DO(C_{4})|=8$. Moreover, we have
$\IDO(C_{4})=\{\mathbf{0}_{C_{4}}, \mrep_{C_{4}},  x_{3},  x_{6}\}$, $\chi^{(C_{4})}=\{x_{4}, x_{5}, x_{6}, \mrep_{C_{4}}\}$ and
$\DO(C_{4})=\{\mathbf{0}_{C_{4}}, \mrep_{C_{4}}, x_{1}, x_{2}, x_{3}, x_{4}, x_{5}, x_{6}\}$, where
$$x_{1}= \left( \begin{matrix}
        0     & u     & v   & 1\\
        0     & u     & 0   & 0
          \end{matrix}
          \right), \quad
 x_{2}= \left( \begin{matrix}
        0     & u     & v   & 1\\
        0     & u     & u   & 0
          \end{matrix}
          \right), \quad
  x_{3}= \left( \begin{matrix}
        0     & u     & v   & 1\\
        0     & u     & u   & u
          \end{matrix}
          \right),      $$
 $$x_{4}= \left( \begin{matrix}
        0     & u     & v   & 1\\
        0     & u     & v   & 0
          \end{matrix}
          \right), \quad
 x_{5}= \left( \begin{matrix}
        0     & u     & v   & 1\\
        0     & u     & v   & u
          \end{matrix}
          \right), \quad and \quad
  x_{6}= \left( \begin{matrix}
        0     & u     & v   & 1\\
        0     & u     & v   & v
          \end{matrix}
          \right).      $$
 By Proposition \mref{pro:502}, $(\DO(C_{4}), \preceq, \textbf{0}_{C_{4}},\mrep_{C_{4}})$ is a sublattice of 	$(\opset(C_{4}), \preceq)$.         
 It is easy to see that the Hasse diagram of $\DO(C_{4})$ is
 $$
\begin{tikzpicture}
\tikzstyle{every node}=[draw,circle,fill=black,node distance=0.6cm,
minimum size=0.8pt, inner sep=0.8pt]
\node[circle] (1)                        [label=above :   $\mrep_{C_{4}}$]{};
\node[circle] (2)   [below   of=1]                      [label=left :   $x_{6}$]{};
\node[circle] (3) [below   of=2]     [label=left : $x_{5}$]{};
\node[circle] (4) [below right of=3]             [label=right  : $x_{4}$] {};
\node[circle] (5)   [below left  of=3]                      [label=left :   $x_{3}$]{};
\node[circle] (6) [below right  of=5]     [label=left : $x_{2}$]{};
\node[circle] (7) [below  of=6]             [label=right  : $x_{1}$] {};
\node[circle] (8) [below   of=7]     [label=below: $\mathbf{0}_{C_{4}}$] {};

\draw[-] (1) --   (2); \draw[-] (2) --   (3); \draw[-] (3) --   (4);
 \draw[-] (3) --   (5); \draw[-] (4) --   (6);
\draw[-] (5) --   (6); \draw[-] (6) --   (7);
\draw[-] (7) --   (8);
\end{tikzpicture}
$$
\mlabel{it:402}
\vspace{-.5cm}
\item  By Theorem \mref{t:00}, we have $|\DO(M_{4})|=9$. Moreover, we have
$\IDO(M_{4})=\{\mathbf{0}_{M_{4}}, \mrep_{M_{4}},  y_{2},  y_{4}\}$, $\chi^{(M_{4})}=\{y_{5}, y_{6}, y_{7}, \mrep_{M_{4}}\}$ and
$\DO(M_{4})=\{\mathbf{0}_{M_{4}}, \mrep_{M_{4}}, y_{1}, y_{2}, y_{3}, y_{4}, y_{5}, y_{6}, y_{7}\}$, where
$$\quad y_{1}= \left( \begin{matrix}
        0     & b_{1}     & b_{2}   & 1\\
        0     & b_{1}     & 0       & 0
          \end{matrix}
          \right),
 y_{2}= \left( \begin{matrix}
        0     & b_{1}     & b_{2}   & 1\\
        0     & b_{1}     & 0       & b_{1}
          \end{matrix}
          \right),
  y_{3}= \left( \begin{matrix}
        0     & b_{1}     & b_{2}   & 1\\
        0     & 0         & b_{2}   & 0
          \end{matrix}
          \right),     y_{4}= \left( \begin{matrix}
        0     & b_{1}     & b_{2}   & 1\\
        0     & 0         & b_{2}   & b_{2}
          \end{matrix}
          \right), $$
 $$
 y_{5}= \left( \begin{matrix}
        0     & b_{1}     & b_{2}   & 1\\
        0     & b_{1}     & b_{2}   & 0
          \end{matrix}
          \right), \quad
 y_{6}= \left( \begin{matrix}
        0     & b_{1}     & b_{2}   & 1\\
        0     & b_{1}     & b_{2}   & b_{1}
          \end{matrix}
          \right), \quad
              and \quad
  y_{7}= \left( \begin{matrix}
        0     & b_{1}     & b_{2}   & 1\\
        0     & b_{1}     & b_{2}   & b_{2}
          \end{matrix}
          \right).
              $$
By Proposition \mref{pro:502}, $(\DO(M_{4}), \preceq, \textbf{0}_{M_{4}},\mrep_{M_{4}})$ is a sublattice of 	$(\opset(M_{4}), \preceq)$.               
 It is easy to see that the Hasse diagram of $\DO(M_{4})$ is given by
 $$
\begin{tikzpicture}
\tikzstyle{every node}=[draw,circle,fill=black,node distance=0.8cm,
minimum size=0.8pt, inner sep=0.8pt]
\node[circle] (1)                        [label=above :   $\mrep_{M_{4}}$]{};
\node[circle] (2)   [below left  of=1]                      [label=left :   $y_{6}$]{};
\node[circle] (3) [below right  of=1]                       [label=right : $y_{7}$]{};
\node[circle] (4) [below left of=2]                         [label=left  : $y_{2}$] {};
\node[circle] (5)   [below right  of=2]                      [label=left :   $y_{5}$]{};
\node[circle] (6) [below right  of=3]                      [label=right : $y_{4}$]{};
\node[circle] (7) [below left of=5]             [label=left  : $y_{1}$] {};
\node[circle] (8) [below right of=5]             [label=right  : $y_{3}$] {};
\node[circle] (9) [below right  of=7]     [label=below: $\mathbf{0}_{M_{4}}$] {};

\draw[-] (1) --   (2); \draw[-] (1) --   (3); \draw[-] (2) --   (4);
 \draw[-] (2) --   (5); \draw[-] (3) --   (5);\draw[-] (3) --   (6);
\draw[-] (4) --   (7); \draw[-] (5) --   (7);\draw[-] (5) --   (8); \draw[-] (6) --   (8);
\draw[-] (7) --   (9);\draw[-] (8) --   (9);
\end{tikzpicture}
$$
\mlabel{it:403}
\end{enumerate}
 \mlabel{exa:400}
\end{example}
\vspace{-1cm}

Proposition \mref{p:4001}  shows that the lattice of derivations is rarely a chain.
\begin{proposition}
	Let $(L, \vee, \wedge,  0, 1)$ be a  lattice.
	Then $(\DO(L), \preceq, \textbf{0}_{L},\mrep_{L})$
	is a chain if and only if $L$ is a chain with $|L|\leq 3$.
	\mlabel{p:4001}
\end{proposition}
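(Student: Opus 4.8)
The plan is to prove both directions. For the ``if'' direction, if $L$ is a chain with $|L|\leq 3$, then by Proposition~\mref{p:2000} we have $\DO(L)=\{\mrep_L,\textbf{0}_L\}$ when $|L|\leq 2$ (obviously a chain), and $|\DO(L)|=4$ when $|L|=3$; in the latter case Example~\mref{exa:400}\mref{it:401} already exhibits $\DO(C_3)$ as a $4$-element chain $\textbf{0}_{C_3}\prec\varphi_1\prec\varphi_2\prec\mrep_{C_3}$, so $(\DO(L),\preceq)$ is a chain. (Alternatively, when $|L|=3$ one just checks directly that the four derivations listed in the proof of Proposition~\mref{p:2000}, namely $\mrep_L,d_u,\chi^{(0)},\textbf{0}_L$, are linearly ordered under $\preceq$.)

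For the ``only if'' direction, I would argue contrapositively: if $L$ is not a chain with $|L|\leq 3$, then $\DO(L)$ is not a chain. There are two cases. \textbf{Case 1: $L$ is not a chain.} Then $L$ contains two incomparable elements $u,v$. The inner derivations $d_u$ and $d_v$ are in $\DO(L)$ by the discussion after Proposition~\mref{pro:201}, and $d_u(1)=u$, $d_v(1)=v$ are incomparable, so neither $d_u\preceq d_v$ nor $d_v\preceq d_u$; hence $\DO(L)$ is not a chain. \textbf{Case 2: $L$ is a chain with $|L|\geq 4$.} Then $L$ contains elements $0<u<v<1$ for some $u,v$. I would exhibit two incomparable derivations. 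A clean choice: take the inner derivation $d_v$ (so $d_v(x)=x\wedge v$) and the derivation $\chi^{(u)}$ from Proposition~\mref{pro:000}\mref{it:0001} (which is in $\DO(L)$). We have $d_v(u)=u=\chi^{(u)}(u)$, but at the point $1$ we get $d_v(1)=v$ while $\chi^{(u)}(1)=u<v$, so $\chi^{(u)}\preceq d_v$ fails to be an equality, and at the point $v$ we get $d_v(v)=v$ while $\chi^{(u)}(v)=v$ as well — so I should instead compare values more carefully. A safer pair is $\chi^{(u)}$ and $d_u$: then $\chi^{(u)}(v)=v$ (since $v\neq 1$) but $d_u(v)=u<v$, so $d_u\not\succeq\chi^{(u)}$; and $\chi^{(u)}(1)=u=d_u(1)$ but that's only one point. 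Checking the other direction: is $\chi^{(u)}\preceq d_u$? No, since $\chi^{(u)}(v)=v>u=d_u(v)$. Is $d_u\preceq\chi^{(u)}$? We need $d_u(x)\leq\chi^{(u)}(x)$ for all $x$; for $x\neq 1$ this says $x\wedge u\leq x$, true; for $x=1$ it says $u\leq u$, true. So actually $d_u\preceq\chi^{(u)}$, and this pair is comparable — not what I want.

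So for Case 2 the right pair is $\chi^{(u)}$ and $d_v$. Recheck: $d_v\preceq\chi^{(u)}$? At $x=1$: $d_v(1)=v$, $\chi^{(u)}(1)=u$, and $v\not\leq u$, so this fails. $\chi^{(u)}\preceq d_v$? At $x=v$ (note $v\neq 1$): $\chi^{(u)}(v)=v$, $d_v(v)=v\wedge v=v$, fine; but at $x=w$ for any $w$ with $v<w<1$ if such exists, or else I need another point — take $x=u$: $\chi^{(u)}(u)=u$, $d_v(u)=u$, fine; the issue is whether $\chi^{(u)}(x)\leq x\wedge v$ always; pick $x$ with $u<x\leq v$, $x\neq 1$ (e.g.\ $x=v$): $\chi^{(u)}(x)=x$ but $x\wedge v = x\leq v$, and $x>u$... we need $x\leq x\wedge v=x$, true since $x\leq v$. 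Hmm, so on $\{x: x\leq v, x\neq 1\}$ we have $\chi^{(u)}(x)=x=x\wedge v=d_v(x)$; on $\{x: x>v, x\neq 1\}$ we have $\chi^{(u)}(x)=x>v=x\wedge v=d_v(x)$, so $\chi^{(u)}\not\preceq d_v$ there. Combined with $d_v\not\preceq\chi^{(u)}$ at $x=1$, the pair $\{\chi^{(u)},d_v\}$ is incomparable, so $\DO(L)$ is not a chain. The main obstacle is simply picking incomparable derivations whose incomparability is easy to verify; the computations are routine once the pair is chosen, and the above analysis shows $\{\chi^{(u)}, d_v\}$ works whenever $L$ is a chain with $0<u<v<1$.

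\begin{proof}
\textbf{($\Leftarrow$)} If $|L|\leq 2$, then $\DO(L)=\{\textbf{0}_L,\mrep_L\}$ by Proposition \mref{p:2000}\mref{it:01}, which is a chain. If $L$ is a $3$-element chain $\{0<u<1\}$, then by the proof of Proposition \mref{p:2000}\mref{it:02} we have $\DO(L)=\{\mrep_L,\textbf{0}_L,\chi^{(0)},d_u\}$. Comparing values at $0,u,1$, one checks directly that $\textbf{0}_L\preceq\chi^{(0)}\preceq d_u\preceq\mrep_L$, so $(\DO(L),\preceq)$ is a $4$-element chain (cf.\ Example \mref{exa:400}\mref{it:401}).

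\textbf{($\Rightarrow$)} We prove the contrapositive. Suppose $L$ is not a chain of cardinality at most $3$.

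\emph{Case 1: $L$ is not a chain.} Choose incomparable $u,v\in L$. Then $d_u,d_v\in\DO(L)$ with $d_u(1)=u$ and $d_v(1)=v$ incomparable, so $d_u$ and $d_v$ are $\preceq$-incomparable and $(\DO(L),\preceq)$ is not a chain.

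\emph{Case 2: $L$ is a chain with $|L|\geq 4$.} Choose $u,v\in L$ with $0<u<v<1$. By Proposition \mref{pro:000}\mref{it:0001}, $\chi^{(u)}\in\DO(L)$, and $d_v\in\DO(L)$. At $x=1$ we have $d_v(1)=v>u=\chi^{(u)}(1)$, so $d_v\not\preceq\chi^{(u)}$. On the other hand, since $|L|\geq 4$ and $v<1$, there is some $x\in L$ with $v< x\neq 1$ or else $v$ is covered by $1$; in the latter situation, using instead the element $w=1$ would loop back, so we argue uniformly: for every $x\in L$ with $x\neq 1$ and $x\not\leq v$ we have $\chi^{(u)}(x)=x$ while $d_v(x)=x\wedge v=v<x$. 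Such an $x$ exists: if $v$ is not covered by $1$ there is $x$ with $v<x<1$; if $v$ is covered by $1$ then... in fact in a chain with $0<u<v<1$ we may simply relabel and take instead the pair $\chi^{(v)}$ and $d_u$, for which $\chi^{(v)}(u)=u$, $d_u(u)=u$, but $\chi^{(v)}(1)=v>u=d_u(1)$ gives $\chi^{(v)}\not\preceq d_u$ while $d_u(v)=u<v=\chi^{(v)}(v)$ (as $v\neq1$) gives $d_u\not\preceq\chi^{(v)}$. Hence $\chi^{(v)}$ and $d_u$ are $\preceq$-incomparable, and $(\DO(L),\preceq)$ is not a chain.

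In both cases $(\DO(L),\preceq)$ fails to be a chain, completing the proof.
\end{proof}
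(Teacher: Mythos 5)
Your ``if'' direction and Case~1 of the converse are fine (in Case~1 you use the inner derivations $d_u,d_v$ where the paper uses $\chi^{(u)},\chi^{(v)}$; both work, since incomparability is witnessed at the point $1$ either way). But Case~2 contains a genuine error. The pair you finally settle on, $\chi^{(v)}$ and $d_u$, is in fact comparable: for $x\neq 1$ one has $d_u(x)=x\wedge u\leq x=\chi^{(v)}(x)$, and at $x=1$ one has $d_u(1)=u\leq v=\chi^{(v)}(1)$, so $d_u\preceq\chi^{(v)}$. Your claimed witness ``$d_u(v)=u<v=\chi^{(v)}(v)$ gives $d_u\not\preceq\chi^{(v)}$'' has the inequality pointing the wrong way: $d_u(v)\leq\chi^{(v)}(v)$ is exactly what $d_u\preceq\chi^{(v)}$ requires at that point, not a refutation of it. (You can see this concretely in Example~\mref{exa:400}\mref{it:402}: there $d_u=x_3$ and $\chi^{(v)}=d_v=x_6$, and $x_3\prec x_5\prec x_6$ in the Hasse diagram.) Your first candidate pair $\chi^{(u)},d_v$ also fails precisely when $v$ is covered by $1$ (e.g.\ in the $4$-element chain), as you half-noticed before pivoting, so Case~2 is left without a valid witness.

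The fix is to pick a pair whose values at $1$ and at $v$ are incomparable in \emph{opposite} directions. The paper takes $w<z$ in $L\setminus\{0,1\}$ and uses $d_w$ together with $\lambda^{(z)}$ (from Proposition~\mref{pro:000}\mref{it:0002}, valid on any chain by Example~\mref{exm:000}): then $d_w(1)=w\nleq 0=\lambda^{(z)}(1)$ while $\lambda^{(z)}(z)=z\nleq w=d_w(z)$. Staying closer to your own toolkit, the pair $d_u$ and $\chi^{(0)}$ also works for any chain with $0<u<v<1$: $d_u(1)=u\nleq 0=\chi^{(0)}(1)$ and $\chi^{(0)}(v)=v\nleq u=d_u(v)$. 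With either replacement Case~2 goes through and the rest of your argument stands.
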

\begin{proof}
	If $L$ is a chain with $|L|\leq 3$, then $(\DO(L), \preceq, \textbf{0}_{L},\mrep_{L})$
	is a chain by Proposition \mref{p:2000} and Example \mref{exa:400} \mref{it:401}.
	
	Conversely, assume that $(\DO(L), \preceq, \textbf{0}_{L},\mrep_{L})$
	is a chain. If $L$ is not a chain, then there exist $u, v\in L$ such that $u$ and $v$ are incomparable.
	By Proposition \mref{pro:000}, we know that $\chi^{(u)}, \chi^{(v)}\in \DO(L)$.
	Since $\chi^{(u)}(1)=u$ and $ \chi^{(v)}(1)=v$, the derivations $\chi^{(u)}$ and $ \chi^{(v)}$
	are incomparable, a contradiction. Thus we get that $L$ is a chain.
	
	If $L$ is a chain with $|L|\geq 4$, then there exist $w, z\in L\backslash \{0, 1\}$ such that $w< z$.
	We have $d_{w}\in \DO(L) $ (here $d_{w}$ is the inner derivation) and $\lambda^{(z)}\in \DO(L)$ by Example \mref{exm:000}.
	But $d_{w}$ and $\lambda^{(z)}$ are incomparable, since   $d_{w}(z)=w\wedge z=w, \lambda^{(z)}(z)=z$, $d_{w}(1)=w\wedge 1=w$ and $ \lambda^{(z)}(1)=0$. Therefore,  $L$ is a chain with $|L|\leq 3$.
\end{proof}

Based on the results in this section, it seems reasonable to pose the following conjectures.

\begin{conjecture}
\mlabel{qu:do}
\begin{enumerate}
	\item \mlabel{it:do1}
For any lattice $(L,\vee, \wedge, 0,1)$, the poset $(\DO(L), \preceq, \textbf{0}_{L},\mrep_{L})$ is a lattice.
\item \mlabel{it:do2}
For any two lattices $L$ and $L'$, if $(\DO(L), \preceq, \textbf{0}_{L},\mrep_{L})$ and
$(\DO(L'), \preceq, \textbf{0}_{L'},\mrep_{L'})$ are isomorphic lattices, then $L$ and $L'$ are isomorphic lattices.
\end{enumerate}
Conjecture \ref{it:do2} depends on Conjecture \ref{it:do1}. As a standalone conjecture, we give 
\begin{enumerate}\addtocounter{enumi}{2}
	\item 
	For any two lattices $L$ and $L'$, if $(\DO(L), \preceq, \textbf{0}_{L},\mrep_{L})$ and
	$(\DO(L'), \preceq, \textbf{0}_{L'},\mrep_{L'})$ are isomorphic posets, then $L$ and $L'$ are isomorphic lattices.
\end{enumerate}
\mlabel{cj:00}	
\end{conjecture}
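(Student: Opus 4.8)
The three parts of Conjecture \ref{cj:00} will be approached as follows: \ref{it:do1} directly; the poset determinacy statement by reconstructing $L$ from the bare poset $\DO(L)$; and \ref{it:do2} by combining the first two, since a lattice isomorphism is in particular an isomorphism of the underlying posets.

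\medskip\noindent\emph{Part \ref{it:do1}.} Every case settled in this section exhibits the same phenomenon: $\DO(L)$ is closed under one of the two \emph{pointwise} operations of $(\opset(L),\preceq)$ — under $\vee$ when $L$ is distributive (Lemma \ref{lem:40a}) and under $\wedge$ when $L=M_{n}$ (Lemma \ref{l:4000}), with chains enjoying both (Lemma \ref{lem:501}). The plan is first to try to establish a dichotomy, namely that \emph{for every lattice $L$ the set $\DO(L)$ is a join-subsemilattice or a meet-subsemilattice of $(\opset(L),\preceq)$}; this would give the conjecture for finite $L$ through Theorem \ref{pro:480}\ref{it:4802}, while the complete case is already Theorem \ref{p:401}, leaving only a limit argument for the general infinite case. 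Should no such dichotomy hold, the fallback is to build the meet of $d,d'\in\DO(L)$ internally,
$$ (d\sqcap d')(x):=\bigvee\{\,e(x)\mid e\in\DO(L),\ e\preceq d \text{ and } e\preceq d'\,\}, $$
using Lemma \ref{lem:500} together with the inequalities $d_{e(1)}\preceq e\preceq\chi^{(e(1))}$ valid for every derivation $e$ (a consequence of Proposition \ref{pro:201}\ref{it:2012}) to see that the suprema involved exist in $L$, and then verifying the Leibniz identity \eqref{eq:der} for $d\sqcap d'$ by a case analysis on $\Fix_{d}(L)\cap\Fix_{d'}(L)$ and its complement in the style of Lemmas \ref{lem:0001} and \ref{lem:0100}. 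I expect this verification — that the pointwise supremum of \emph{this particular} family of derivations is again a derivation, even though the pointwise supremum of two arbitrary derivations need not be (Remark \ref{remark:41}) — to be the main obstacle.

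\medskip\noindent\emph{Poset determinacy.} The idea is to recover $L$, up to isomorphism, from the poset $\DO(L)$. By Proposition \ref{pp:440}, $L\cong\chi^{(L)}$, and one first checks that $\chi^{(L)}$ is exactly the principal up-set $\{\,d\in\DO(L)\mid\chi^{(0)}\preceq d\,\}$: if $\chi^{(0)}\preceq d$ then $d(x)\ge x$, hence $d(x)=x$, for every $x\ne 1$ by Proposition \ref{pro:201}\ref{it:2011}, so $d=\chi^{(d(1))}$. Thus it suffices to give an order-theoretic description of $\chi^{(0)}$, or of a sub-poset canonically attached to it, that is invariant under automorphisms of $\DO(L)$: if $R_{L}\subseteq\DO(L)$ is the set so described, then any poset isomorphism $\Phi\colon\DO(L)\to\DO(L')$ satisfies $\Phi(R_{L})=R_{L'}$, and for $d\in R_{L}$ one would obtain $\uparrow\Phi(d)=\Phi(\uparrow d)\cong\uparrow d\cong L$ and simultaneously $\uparrow\Phi(d)\cong L'$, whence $L\cong L'$. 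The difficulty, and in my view the genuine content of the conjecture, is that $\chi^{(0)}$ need not be order-theoretically unique: already when $L$ is the four-element chain, $\DO(L)$ admits an automorphism interchanging $\chi^{(0)}$ with the inner derivation $d_{u}$, yet the up-sets of both happen to be isomorphic to $L$. One therefore has to single out a property shared by $\chi^{(0)}$ — plausibly phrased in terms of the isomorphism type of its up-set and the fact (Proposition \ref{p:4003}) that $\chi^{(L)}$ is a filter of $\DO(L)$ — and then prove the \emph{robust} statement that every element possessing that property has up-set isomorphic to $L$, so that the non-uniqueness does no harm. Carrying this out for an arbitrary lattice is the crux, and it appears to presuppose part \ref{it:do1}.

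\medskip
Finally, part \ref{it:do2} is immediate: assuming \ref{it:do1}, both $\DO(L)$ and $\DO(L')$ are lattices, so any lattice isomorphism between them is in particular an isomorphism of posets, and the poset determinacy statement yields $L\cong L'$.
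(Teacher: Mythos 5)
The statement you are examining is posed in the paper as a \emph{conjecture}, not a theorem: the authors offer it as an open problem supported by the partial results of Section~\ref{sec:latder} (Theorems~\ref{pro:480} and~\ref{p:401}, Propositions~\ref{pp:40}, \ref{pp:440}, \ref{p:4003} and~\ref{p:4000}), and they give no proof. There is therefore no argument in the paper to compare yours against; and your text, appropriately, does not claim to be a proof either --- it is a research program in which every decisive step is explicitly flagged as open (``should no such dichotomy hold\dots'', ``I expect this verification\dots to be the main obstacle'', ``the crux'').

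To name the gaps concretely: for part~\ref{it:do1}, the proposed dichotomy (that $\DO(L)$ is always a join- or a meet-subsemilattice of $(\opset(L),\preceq)$) is not established anywhere, and the fallback $(d\sqcap d')(x):=\bigvee\{e(x)\mid e\in\DO(L),\ e\preceq d,\ e\preceq d'\}$ faces two unresolved problems: the suprema need not exist when $L$ is not complete (the correct bounds $d_{e(1)}\preceq e\preceq\chi^{(e(1))}$ only sandwich each $e$, they do not produce a supremum of the set $\{e(x)\}$), and the verification that the resulting operator satisfies Eq.~\eqref{eq:der} is exactly the point where Remark~\ref{remark:41} shows pointwise suprema of derivations can fail to be derivations. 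For the determinacy parts, your reduction is sensible and your two supporting observations are correct --- $\chi^{(L)}$ is indeed the principal up-set of $\chi^{(0)}$ in $(\DO(L),\preceq)$, and in $\DO(C_4)$ the element $\chi^{(0)}=x_4$ is indeed interchangeable with $d_u=x_3$ by a poset automorphism --- but the required order-theoretic characterization of $\chi^{(0)}$ (or of a robust class of elements whose up-sets are all isomorphic to $L$) is never supplied, so parts~\ref{it:do2} and (iii) remain open as well. In short, your proposal correctly identifies the obstacles but overcomes none of them; this is consistent with the status of the statement in the paper, which remains a conjecture.
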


\noindent
{\bf Acknowledgments.}
This work is supported by National Natural Science Foundation of China (Grant Nos. 11771190, 11801239).

\end{document}